\newtheorem{theorem}{Theorem}[section]
\newtheorem{lemma}[theorem]{Lemma}
\newtheorem{corollary}[theorem]{Corollary}
\newtheorem{proposition}[theorem]{Proposition}
\newdefinition{remark}{Remark}
\newproof{pf}{Proof}
\theoremstyle{definition}
\newtheorem{defi}{Definition}
\newtheorem{notation}[theorem]{Notation}
\newtheorem{example}{Example}
\newcommand{\field}[1]{\mathbb{#1}}          \newcommand{\Q}{\field{Q}}
\newcommand{\N}{\field{N}}
\newcommand{\R}{\field{R}}                   \newcommand{\Z}{\field{Z}}
\newcommand{\C}{\field{C}}
\newcommand{\im}{\rm Im}
\newcommand{\mF}{{\mathcal F}}
\newcommand{\mX}{{\mathcal X}}
\newcommand{\f}{{\mathcal F}}
\newcommand{\ep}{\epsilon}
\renewcommand{\a}{\alpha}                      \renewcommand{\b}{\beta}
\newcommand{\sign}{\textnormal{sign}}
\begin{document}
	\title{ Best Approximations by $\mathcal{F}_{p^l}$-Continued Fractions} 
	
	\author[sk]{S. Kushwaha}\corref{cor1}  
	\ead{seema28k@gmail.com}
	\author[rvt]{R. Sarma} 
	\ead{ritumoni@maths.iitd.ac.in}

	\cortext[cor1]{Corresponding author}
	\address[sk]{Department of Applied Sciences, Indian Institute of Information Technology Allahabad, Prayagraj, India}
	\address[rvt]{Department of Mathematics,
		Indian Institute of Technology Delhi, India}

	\begin{abstract}
		
		In this article, for a certain subset $\mathcal{X}$ of the extended set of rational numbers, we introduce the notion of {\it best $\mathcal{X}$-approximations} of a real number.  The notion of best $\mathcal{X}$-approximation  is analogous to that of best rational approximation.   We explore these approximations with the help of $\mathcal{F}_{p^l}$-continued fractions, where $p$ is a prime and $l\in\mathbb{N}$, we show that the convergents of the $\mathcal{F}_{p^l}$-continued fraction expansion  of a real number $x$ satisfying certain maximal conditions are exactly the best $\mathcal{F}_{p^l}$-approximations of $x$.
	
	\end{abstract}
	\maketitle	
	\section{Introduction}

Let $p$ be a prime and $l\in \N.$	A finite continued fraction of the form $$\frac{1}{0+}~\frac{{p^l}}{b+}~\frac{\epsilon_{1} }{a_{1}+}~\frac{\epsilon_{2}}{a_{2}+}~\cdots\frac{\epsilon_{n}}{a_{n}}~~(n\ge 0)$$
or
an infinite continued fraction of the form
$$\frac{1}{0+}~\frac{{p^l}}{b+}~\frac{\epsilon_{1} }{a_{1}+}~\frac{\epsilon_{2}}{a_{2}+}~\cdots\frac{\epsilon_{n}}{a_{n}+}\cdots$$ 
is called an $\mF_{{p^l}}$-{\it continued fraction},
 where $b$ is an integer co-prime to $p$ and for $i\ge1$,  $a_i\in\N$ and  $\ep_i\in\{\pm1\}$  with certain conditions.  A precise definition of an $\f_{{p^l}}$-continued fraction is stated in Section 3 introduced by Kushwaha et al. \cite{seemafnpart1}.
  In fact, this family of continued fractions  arises from a family of graphs $\f_{{p^l}}$ which are similar to the Farey graph. The value of a finite $\f_{p^l}$-continued fraction is a member of the set 
 \begin{equation}\label{X_n}
 		\mathcal{X}_{p^l}=\left\{\frac{x}{y}:~x,y\in\mathbb{Z},~ y>0,~\mathrm{gcd}(x,y)=1~\textnormal{and}~{p^l}|y\right\}\cup\{\infty\}
\end{equation}   
 which is the vertex set of $\f_{p^l}$.
 The important fact is that every real number has an $\f_{p^l}$-continued fraction expansion.

 An element  $u/v$ of $\mX_{p^l}$ is called a \textit{best $\mX_{{p^l}}$-approximation} of $x\in\R$, if for every $u'/v'\in\mX_{{p^l}}$ different from $u/v$ with	$0< v' \le v$, we have $|vx-u|<|v'x-u'|$.  

These approximations have been discussed in \cite{seema2,seema} for $p^l=3,2$ respectively. In these papers, authors have achieved results analogous to the classical one, that is,  convergents of the continued fraction of a real number characterize the best approximations of the real number.

 In several recent papers \cite{lucax_tribnocci1,lucax_twofibonacci,primepowers,rep}, the following problem was investigated. Let $U$
be some interesting set of positive integers. What can one say about the
square-free integers $d$ such that the  first (or, the second) coordinate $X\,(\textnormal{respectively, }Y)$ of a solution to the Pell equation $X^2-dY^2=1$ is a member of the set $U$. The first author has applied best $\mX_{2^l}$-approximations to solve certain conditional Pell equations \cite{seema_pell} which is a special case of the above mentioned problem. We strongly believe that a complete generalization of work in \cite{seema2,seema} will be very useful and in this article, we deal with the relation of best $\mX_{p^l}$-approximations and $\f_{p^l}$-convergents.

Note that a real number may have more than one $\f_{p^l}$-continued fraction expansions; this fact was observed in \cite{seema2} for $p^l=3$. 
For instance, the set of $\f_5$-continued fraction expansions of $11/40$ is as follows
\begin{align*}
&	\Big\{\frac{1}{0+}~\frac{5}{1+}~\frac{1}{2+}~\frac{1}{1+}~\frac{1}{1+}~\frac{1}{1},~ \frac{1}{0+}~\frac{5}{1+}~\frac{1}{2+}~\frac{1}{2+}~\frac{-1}{2},~\\
&\frac{1}{0+}~\frac{5}{1+}~\frac{1}{2+}~\frac{1}{1+}~\frac{1}{2},
\frac{1}{0+}~\frac{5}{1+}~\frac{1}{3+}~\frac{-1}{2+}~\frac{1}{1},~\frac{1}{0+}~\frac{5}{1+}~\frac{1}{3+}~\frac{-1}{3},~\\ \tag*{(2)}\label{manyexpansions}
&\frac{1}{0+}~\frac{5}{2+}~\frac{-1}{2+}~\frac{-1}{2+}~\frac{1}{2}, \frac{1}{0+}~\frac{5}{2+}~\frac{-1}{2+}~\frac{-1}{3+}~\frac{-1}{2},
\frac{1}{0+}~\frac{5}{2+}~\frac{-1}{2+}~\frac{-1}{2+}~\frac{1}{1+}~\frac{1}{1}
\Big\}.
\end{align*}
When $p=3$, the longest $\f_{p^l}$-continued fraction is unique (see \cite{seema2}) and this was helpful to achieve the approximation results. But this is false for $p\ge5.$ In the above example,  $ \frac{1}{0+}~\frac{5}{1+}~\frac{1}{2+}~\frac{1}{1+}~\frac{1}{1+}~\frac{1}{1}$ and $	\frac{1}{0+}~\frac{5}{2+}~\frac{-1}{2+}~\frac{-1}{2+}~\frac{1}{1+}~\frac{1}{1} $ are both longest $\f_5$-continued fractions of 11/40.  Besides, not every longest $\f_{p^l}$-continued fraction is helpful to describe best $\mX_{p^l}$-approximations. In fact, best approximations are described by $\f_{p^l}$-continued fractions with maximum $+1$ (see Theorem 4.9 and 4.10).

	Finally, we show that for any real number $x$ which is not in $\Q\setminus\mX_{p^l},$ every convergent of the $\f_{p^l}$-continued fraction of $x$ with maximum $+1$ is a best $\mX_{p^l}$-approximation of $x$  and conversely. 
	%In the last section of this article, namely, Appendix A, we describe  a few terminologies and discuss a few diagrams which will be helpful to follow our main results.
	
	\section{Preliminaries}

In this section, we recall certain definitions and results on $\f_{p^l}$-continued fractions from \cite{seemafnpart1}. Now onwards, $N$ denotes a positive integer of the form $p^l,$ where $p$ is a prime and $l$ is a natural number.
\begin{defi}
Given $N=p^l,$	a finite continued fraction of the form $$\frac{1}{0+}~\frac{N}{b+}~\frac{\epsilon_{1} }{a_{1}+}~\frac{\epsilon_{2}}{a_{2}+}~\cdots\frac{\epsilon_{n}}{a_{n}}~~(n\ge 0)$$
	or
	an infinite continued fraction of the form
	$$\frac{1}{0+}~\frac{N}{b+}~\frac{\epsilon_{1} }{a_{1}+}~\frac{\epsilon_{2}}{a_{2}+}~\cdots\frac{\epsilon_{n}}{a_{n}+}\cdots$$ 
	is called an $\mF_{N}$-{\it continued fraction} if $b$ is an integer co-prime to $N$, and for $i\ge1$, $a_i\in\N$ and $\ep_i\in\{\pm 1\}$ such that the following conditions hold:
	\begin{enumerate}
		\item $a_i+\ep_{i+1}\ge1$;
		\item $a_i+\ep_i\ge1$;
		\item $\mathrm{gcd}(p_i,q_i)=1,$ where $p_i=a_i p_{i-1}+\ep_i p_{i-2}$, $q_i=a_i q_{i-1}+\ep_i q_{i-2}$, $(p_{-1},q_{-1})=(1,0)$ and $(p_0,q_0)=(b,N)$. 
	\end{enumerate}  
\end{defi} 
	For $i\ge1,$ the value $p_i/q_i$ of the the expression
	$$\frac{1}{0+}~\frac{N}{b+}~\frac{\epsilon_{1} }{a_{1}+}~\frac{\epsilon_{2}}{a_{2}+}~\cdots\frac{\epsilon_{i}}{a_{i}}$$
	is called the {\it $i$-th $\f_N$-convergent} of the continued fraction. The sequence $\{\frac{p_i}{q_i}\}_{i\ge0}$ is called the {\it sequence of $\f_N$-convergents}. The expression 
	$\frac{\epsilon_{i}}{a_{i}+}~\frac{\epsilon_{i+1}}{a_{i+1}+}\cdots $ is called the {\it $i$-th fin}. Let $y_i$ denote the $i$-th fin, that is,  $y_i=\frac{\epsilon_{i}}{a_{i}+}~\frac{\epsilon_{i+1}}{a_{i+1}+}\cdots $, then $\ep_{i}=\sign(y_{i})$.
	 The following theorem assets certain properties of $\f_N$-continued fractions. 
	\begin{theorem}\cite[Theorem 3.2]{seemafnpart1}\label{distinctconvergents}
		Suppose $x= \frac{1}{0+}~\frac{N}{b+}~ \frac{\ep_1}{a_1 +}~ \frac{\ep_2}{a_2 +}~ \frac{\ep_3}{a_3 +}~\cdots
		$ is an $\f_N$-continued fraction with the sequence of convergents $\{p_i/q_i\}_{i\ge -1}$. For $i\ge1$, let  
		$y_i$ be the $i$-th \it{fin} of the continued fraction.  Then
		\begin{enumerate}
			\item 	for  $i\ge1,$ $a_ip_{i-1}\not \equiv -\ep_{i}p_{i-2}\mod p$;
			\item 	the sequence $\{q_i\}_{i\ge -1}$ is strictly increasing;
			\item $\dfrac{p_i}{q_i}\ne \dfrac{p_j}{q_j}$ for $i\ne j$;
			\item for $i\ge1$, $|y_i|\le1$;
			\item for $i\ge0,$ $x=\dfrac{x_{i+1}p_i+\ep_{i+1}p_{i-1}}{x_{i+1}q_i+\ep_{i+1}q_{i-1}},$ where $x_{i+1}=\dfrac{1}{|y_{i+1}|}.$
		\end{enumerate}
	\end{theorem}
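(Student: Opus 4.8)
The plan is to derive all five assertions from the matrix form of the two recurrences together with the arithmetic conditions in the definition. Encoding
$$\begin{pmatrix} p_i & p_{i-1}\\ q_i & q_{i-1}\end{pmatrix}=\begin{pmatrix} p_{i-1} & p_{i-2}\\ q_{i-1} & q_{i-2}\end{pmatrix}\begin{pmatrix} a_i & 1\\ \ep_i & 0\end{pmatrix},\qquad \begin{pmatrix} p_0 & p_{-1}\\ q_0 & q_{-1}\end{pmatrix}=\begin{pmatrix} b & 1\\ N & 0\end{pmatrix},$$
and taking determinants yields the identity $p_iq_{i-1}-p_{i-1}q_i=(-1)^{i+1}N\prod_{j=1}^{i}\ep_j$, which is nonzero because $N=p^l$. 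A parallel one-step induction on $q_i=a_iq_{i-1}+\ep_iq_{i-2}$, with base values $q_{-1}=0$ and $q_0=N$, shows that $N\mid q_i$ for every $i\ge0$.

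For (1) I would combine these two facts: since $N=p^l$ divides $q_i$ we have $p\mid q_i$, so the hypothesis $\gcd(p_i,q_i)=1$ forces $p\nmid p_i$; as $p_i=a_ip_{i-1}+\ep_ip_{i-2}$, this is precisely $a_ip_{i-1}\not\equiv-\ep_ip_{i-2}\pmod p$. For (2) I would induct on $i$ using $q_i-q_{i-1}=(a_i-1)q_{i-1}+\ep_iq_{i-2}$: when $\ep_i=+1$ the right side is at least $q_{i-2}>0$, and when $\ep_i=-1$ the condition $a_i+\ep_i\ge1$ gives $a_i\ge2$, whence $(a_i-1)q_{i-1}\ge q_{i-1}>q_{i-2}$ by the inductive hypothesis; either way $q_i>q_{i-1}$. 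Strict monotonicity then gives (3) immediately: each finite convergent $p_i/q_i$ is in lowest terms with positive denominator, so an equality $p_i/q_i=p_j/q_j$ would force $q_i=q_j$, contradicting that distinct indices give distinct denominators (and $\infty=p_{-1}/q_{-1}$ is trivially distinct from all of them).

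For (4) and (5) the engine is the recursion for the fins. Writing $x_i=1/|y_i|$ and $\ep_i=\sign(y_i)$, the relation $y_i=\ep_i/(a_i+y_{i+1})$ becomes $x_i=a_i+y_{i+1}=a_i+\ep_{i+1}/x_{i+1}$. Substituting $y_1=\ep_1/x_1$ into the value $x=(b+y_1)/N$ of the whole fraction gives the base identity $x=(x_1p_0+\ep_1p_{-1})/(x_1q_0+\ep_1q_{-1})$, and I would prove (5) by induction: replacing $x_i$ by $a_i+\ep_{i+1}/x_{i+1}$ in the level-$(i-1)$ formula and clearing the factor $x_{i+1}$ reproduces exactly $x=(x_{i+1}p_i+\ep_{i+1}p_{i-1})/(x_{i+1}q_i+\ep_{i+1}q_{i-1})$. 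Assertion (4), namely $x_i\ge1$, then follows from $x_i=a_i+y_{i+1}$ with $a_i\ge1$: if $y_{i+1}\ge0$ then $x_i\ge a_i\ge1$, while if $y_{i+1}<0$ then $\ep_{i+1}=-1$ and the condition $a_i+\ep_{i+1}\ge1$ gives $x_i\ge a_i-1\ge1$.

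The main obstacle is that the bound $|y_{i+1}|\le1$ invoked in the previous step is itself part of assertion (4), so the induction must be set up carefully. For a terminating fraction I would run it backward from the final fin $y_n=\ep_n/a_n$, where $|y_n|=1/a_n\le1$; for a non-terminating fraction I would instead obtain the bound as the limit of the truncated fins, verifying that $x_i=a_i+\ep_{i+1}/x_{i+1}$ preserves $x_i\ge1$ in the limit. A second, smaller point requiring attention is the first index in (2): since $q_1=a_1N$, strict increase from $q_0$ to $q_1$ relies on the convention ruling out $a_1=1$, which must be read off from the definition of the leading terms.
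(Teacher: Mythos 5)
The paper does not prove this theorem: it is imported verbatim from \cite[Theorem 3.2]{seemafnpart1} and used as a black box, so there is no in-paper argument to compare yours against. Judged on its own terms, your proof is sound and is the argument one would expect. The derivation of (1) from $p^l\mid q_i$ for $i\ge 0$ together with the hypothesis $\gcd(p_i,q_i)=1$ is exactly right, since $a_ip_{i-1}\equiv -\ep_ip_{i-2}\pmod p$ is precisely $p\mid p_i$; the monotonicity and distinctness arguments for (2)--(3) are correct; and (5) is the standard substitution $x_i=a_i+\ep_{i+1}/x_{i+1}$ pushed through the recurrences, with the base case $x=(b+y_1)/p^l$ checked correctly. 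The two loose ends you flag are genuine but are gaps in the transcribed definition rather than in your reasoning. First, the circularity in (4): your forward step needs $|y_{i+1}|\le 1$ to get $a_i+y_{i+1}\ge 1$, so for infinite expansions the backward induction must be replaced by the bound on finite truncations followed by a passage to the limit, which presupposes that the tails converge --- that convergence is part of the framework of \cite{seemafnpart1} and should be cited explicitly rather than waved at. Second, the $i=1$ step of (2) really does fail for $a_1=1$, $\ep_1=+1$ (then $q_1=a_1p^l=q_0$), and nothing in the definition as reproduced in Section 2 excludes this; one has to invoke the convention from the source paper that the associated path $\infty\to P_0\to P_1\to\cdots$ has strictly increasing denominators, which forces $a_1\ge 2$. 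With those two citations made explicit, the proof is complete.
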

	
	 An $\f_N$-continued fractions is arising from the graph $\f_N,$ where
 the  vertex set is  $\mathcal{X}_N$ (as defined in Equation \eqref{X_n})  and vertices ${p}/{q}$ and ${r}/{s}$, are adjacent in $\mathcal{F}_N$ if and only if  $$rq-sp=\pm N.$$ 
		If $P$ and $Q$ are adjacent in $\f_N$ we write $P\sim_N Q$. Note that the graph $\f_{1}$ is the Farey graph 
and for every $N\in\N$, the graph $\f_N$ is isomorphic to a subgraph of the Farey graph.

	 \begin{figure}[!h]
	 	%	\begin{minipage}{.9\textwidth}
	 	\centering
	 	\includegraphics[width=.8\linewidth]{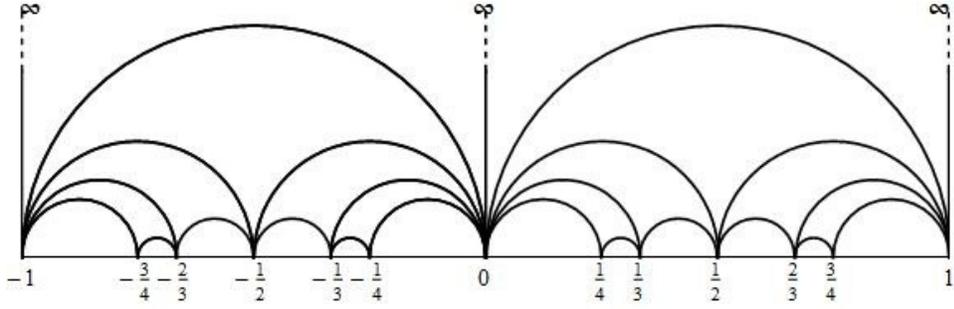}
	 	\caption{A few vertices and edges of the Farey graph in [-1,1]}\label{Fig:farey}
	 	%	\end{minipage}\hfill
	 \end{figure}
	\noindent Edges of $\f_{N}$ are represented as hyperbolic geodesics in the upper-half plane $$\mathcal{U}=\{z\in\C: \im(z)>0\},$$
	that is, as Euclidean semicircles or half lines perpendicular to the real line. Figure 1 is a display of a few edges of the Farey graph in the interval [-1,1].
Since edges of the Farey graph do not cross each other, and $\f_N$ is embedded in the Farey graph, we have the following result.
	
	\begin{proposition}\cite[Corollary 2.2]{seemafnpart1}\label{nocrossing}
		No two edges cross in $\f_N$.
	\end{proposition}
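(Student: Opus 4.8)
The plan is to reduce the statement to the classical fact, already invoked in the text, that no two edges of the Farey graph $\f_1$ cross. The bridge is the M\"obius transformation $\phi\colon z\mapsto Nz$, which is an orientation-preserving isometry of the hyperbolic plane $\mathcal{U}$ (it is induced by the matrix $\left(\begin{smallmatrix} N & 0\\ 0 & 1\end{smallmatrix}\right)$ of positive determinant). First I would check that $\phi$ carries the vertex set $\mX_N$ into the vertex set of $\f_1$: a vertex $p/q$ of $\f_N$ has $N\mid q$, so $\phi(p/q)=p/(q/N)$, and since $\gcd(p,q)=1$ we still have $\gcd(p,q/N)=1$; also $\phi(\infty)=\infty$. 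Next I would verify that $\phi$ sends edges to edges. If $p/q\sim_N r/s$, then $rq-sp=\pm N$; writing $q=N q_1$ and $s=N s_1$ gives $r q_1 - s_1 p=(rq-sp)/N=\pm 1$, which is exactly the adjacency relation of $\f_1$ between $\phi(p/q)=p/q_1$ and $\phi(r/s)=r/s_1$. Thus $\phi$ realizes the abstract embedding $\f_N\hookrightarrow \f_1$ alluded to in the text, and it does so geometrically: each geodesic edge of $\f_N$ is mapped onto a geodesic edge of $\f_1$.

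Having set this up, the non-crossing property transfers for free. Two geodesics in $\mathcal{U}$ with distinct endpoints on $\partial\mathcal{U}=\R\cup\{\infty\}$ cross in the interior precisely when their endpoint pairs interleave on the circle $\partial\mathcal{U}$, and this interleaving is preserved by any homeomorphism of $\partial\mathcal{U}$, in particular by the boundary extension of $\phi$. Hence, given two edges $e_1,e_2$ of $\f_N$, they cross if and only if the Farey edges $\phi(e_1),\phi(e_2)$ cross. Since edges of the Farey graph do not cross, neither do $e_1$ and $e_2$, completing the argument.

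The step requiring the most care is the one I would flag as the crux: the embedding $\f_N\hookrightarrow\f_1$ must be recognized not merely as a combinatorial graph isomorphism but as the restriction of a genuine hyperbolic isometry, since ``crossing'' is a geometric (not graph-theoretic) relation and would not otherwise be preserved. Once $\phi$ is pinned down as $z\mapsto Nz$, the remaining verifications are the short determinant computation above and the standard interleaving criterion for geodesic intersection. As a self-contained alternative that avoids citing the Farey case, one could argue directly: suppose edges $p/q\sim_N r/s$ and $p'/q'\sim_N r'/s'$ crossed; then their endpoints interleave on $\R\cup\{\infty\}$, and combining the two relations $rq-sp=\pm N$ and $r'q'-s'p'=\pm N$ with the interleaving inequalities yields a contradiction with coprimality. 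I would, however, present the isometry route, as it is shorter and matches the framing already established in the preceding paragraph of the paper.
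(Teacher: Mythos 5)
Your proof is correct and follows exactly the route the paper itself takes: the paper justifies this proposition in one line by noting that $\f_N$ embeds in the Farey graph, whose edges do not cross, and your argument simply makes that embedding explicit as the hyperbolic isometry $z\mapsto Nz$ and verifies that it carries vertices, edges, and the crossing relation over correctly. The extra care you take in noting that the embedding must be geometric (not merely combinatorial) for ``crossing'' to transfer is a worthwhile refinement of the paper's terse justification, but it is the same proof.
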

Here, we recall a few definitions which will help us to show that every irrational number has a unique $\f_N$-continued fraction expansion.
 	 \begin{defi} Let $x\in\mX_N.$ Suppose $\Theta_n\equiv\infty=P_{-1}\to P_0\to P_1\to\cdots\to P_{n-1}\to P_n,$ where $x=P_n,$ is such that no vertex is repeated (i.e., $P_i\ne P_j$ for $-1\le i\ne j\le n$).  
	 Let $Q\in \mX_N$ such that $Q\ne P_i$ for $-1\le i\le n$ and $x\sim_N Q$. 
  	   If $P_{n-1}<Q<x$ or $x<Q<P_{n-1}$, then the edge $x\to Q$ is called a {\it direction changing edge} from $x$ relative to $\Theta_n$ (see Figure 2). 
  	 
  If $P_{n-1}<x<Q \textnormal{ or } P_{n-1}>x>Q$, then the edge $x\to Q$ is called a {\it direction retaining edge} from $x$ relative to $\Theta_n$ (see Figure 3).
  		
  	\end{defi}
  	  \begin{figure}[h!]
  	 	\centering
  	 	\includegraphics[scale= 1.5]{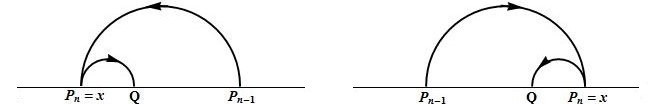}\caption{Direction changing edge $P_n\sim_N Q$}
  	 \end{figure}   
  	 \begin{figure}[h!]
  	 	\centering
  	 	  	 	\includegraphics[scale= 1.5]{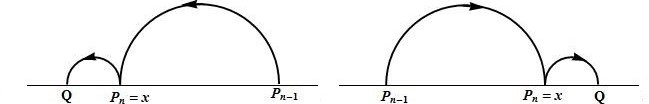}
  	 	\caption{Direction retaining edge $P_n\sim_N Q$}		
  	 \end{figure}
\begin{defi} Suppose $n\in\N.$
	A path from infinity to a vertex $x$ in $\f_N$ given by
	$$\Theta_n\equiv\infty=P_{-1}\rightarrow P_0\rightarrow P_1\rightarrow\cdots \to  P_n,$$	where $P_i=p_i/q_i$ and $q_{i}<q_{i+1}$ for $i\ge-1$ and $x=P_n$, is called a \textit{well directed path} if  $P_{i+1}\sim_NP_{i+2}$ ($0\le i\le n-2$) is direction changing relative to  $\Theta_{i+1}$ whenever  $P_{i-1}\sim_N P_{i+1}$.
\end{defi}
	\begin{theorem}\cite[Theorem 3.5]{seemafnpart1}\label{main1} Let $p^l$ be a fixed natural number, where $p$ is a prime and $l\in \N$.
		\begin{enumerate}
			
			\item Suppose $x\in\mX_{p^l}$.  Then every well directed path defines a finite $\f_{p^l}$-continued fraction of $x$.
			\item The value of every finite $\f_{p^l}$-continued fraction belongs to $\mX_{p^l}$ and the continued fraction defines a well directed path in $\f_{p^l}$ from $\infty$ to its value with the convergents as  vertices in the path.
			\item 	Every real number has an $\mathcal{F}_{p^l}$-continued fraction expansion.
		\end{enumerate}
	\end{theorem}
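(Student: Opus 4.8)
\begin{pf}
The plan is to build an explicit dictionary between the algebraic data $(b,\{a_i\},\{\ep_i\})$ of an $\f_N$-continued fraction and the combinatorial data of a path in $\f_N$, and then read off (1), (2) and (3) from it. First I would record the convergent identities. From $p_i=a_ip_{i-1}+\ep_ip_{i-2}$, $q_i=a_iq_{i-1}+\ep_iq_{i-2}$ with $(p_{-1},q_{-1})=(1,0)$ and $(p_0,q_0)=(b,N)$ one gets $p_iq_{i-1}-p_{i-1}q_i=-\ep_i(p_{i-1}q_{i-2}-p_{i-2}q_{i-1})$, whence $p_iq_{i-1}-p_{i-1}q_i=\pm N$ for all $i\ge0$, and likewise $p_{i+1}q_{i-1}-p_{i-1}q_{i+1}=\pm a_{i+1}N$. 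A short induction from $q_{-1}=0$, $q_0=N$ gives $N\mid q_i$ for $i\ge0$, while condition (2) together with $a_i\ge1$ yields $q_{i-1}<q_i$ (this is Theorem \ref{distinctconvergents}(2)). Combined with condition (3), $\gcd(p_i,q_i)=1$, this shows each convergent $P_i=p_i/q_i$ lies in $\mXp$, and $|p_iq_{i-1}-p_{i-1}q_i|=N$ gives $P_{i-1}\sim_N P_i$. Hence the convergents trace a path $\infty=P_{-1}\to P_0\to\cdots\to P_n$ in $\f_N$ whose value is the last convergent $P_n\in\mXp$; this is the substance of (2) apart from well-directedness.

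Next I would match the defining inequalities with the direction conditions. Since $P_{i+1}-P_i=(p_{i+1}q_i-p_iq_{i+1})/(q_iq_{i+1})$ and the numerator has sign $-\ep_{i+1}\cdot\sign(P_i-P_{i-1})$, the edge $P_i\to P_{i+1}$ is, relative to $\Theta_i$, direction changing exactly when $\ep_{i+1}=+1$ and direction retaining exactly when $\ep_{i+1}=-1$; here the inequality $|P_{i+1}-P_i|<|P_i-P_{i-1}|$ needed to place $P_{i+1}$ strictly between $P_{i-1}$ and $P_i$ is automatic from $q_{i-1}<q_{i+1}$. The identity $p_{i+1}q_{i-1}-p_{i-1}q_{i+1}=\pm a_{i+1}N$ shows $P_{i-1}\sim_N P_{i+1}$ if and only if $a_{i+1}=1$. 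Therefore the requirement ``$P_{i+1}\to P_{i+2}$ is direction changing whenever $P_{i-1}\sim_N P_{i+1}$'' is literally ``$a_{i+1}=1\Rightarrow\ep_{i+2}=+1$'', which is exactly condition (1), $a_{i+1}+\ep_{i+2}\ge1$; the lone term $\ep_1$ carries no such constraint and only records, through $\sign(P_1-P_0)=\ep_1$, on which side of $P_0$ the path first moves. This completes (2), and reversing every equivalence proves (1): from a well-directed path one sets $\ep_i=\pm1$ according as the edge is direction changing or retaining, reads $a_i\ge1$ off $(p_i,q_i)=a_i(p_{i-1},q_{i-1})+\ep_i(p_{i-2},q_{i-2})$ (every neighbour of $P_{i-1}$ has this form), and then verifies (3) because the $P_i$ are vertices, (2) because the denominators strictly increase, and (1) because the path is well-directed.

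Finally, for (3) I would construct, for an arbitrary $x\in\R$, a well-directed path from $\infty$ that reaches $x$ when $x\in\mXp$ and converges to $x$ otherwise; applying (1) then produces the desired expansion, which is finite precisely when $x\in\mXp$ and infinite when $x\in\R\setminus\mXp$ (in particular for $x\in\Q\setminus\mXp$). Starting from $P_{-1}=\infty$, I would take $P_0=b/N$ with $\gcd(b,N)=1$ the vertex adjacent to $\infty$ capturing $x$, and then inductively choose $P_{i+1}=a(p_i,q_i)+\ep(p_{i-1},q_{i-1})$ among the neighbours of $P_i$ to be the one for which $x$ lies in the interval determined by $P_i$ and $P_{i+1}$, taking the direction changing neighbour whenever a triangle $P_{i-1}\sim_N P_{i+1}$ would otherwise arise, so that well-directedness is preserved at each stage. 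Proposition \ref{nocrossing} guarantees that the successive intervals nest consistently, and since $|P_{i+1}-P_i|=N/(q_iq_{i+1})\to0$ as the denominators increase, the convergents must converge to $x$. I expect (3) to be the main obstacle: one has to show the greedy choice is always available and never conflicts with the well-directed constraint coming from (1), and that the growth of $\{q_i\}$ forces convergence to $x$ itself rather than to some other point; the non-crossing of edges and the strict monotonicity of $\{q_i\}$ from Theorem \ref{distinctconvergents} are the decisive tools here.
\end{pf}
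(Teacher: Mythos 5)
This statement is quoted from \cite[Theorem 3.5]{seemafnpart1} and the present paper gives no proof of it, so there is nothing in-document to compare your argument against; I can only judge the proposal on its own terms. Your dictionary for parts (1) and (2) is essentially right and well organized: the computation $p_{i+1}q_i-p_iq_{i+1}=-\ep_{i+1}(p_iq_{i-1}-p_{i-1}q_i)$ correctly identifies $\ep_{i+1}=+1$ with direction changing, $p_{i+1}q_{i-1}-p_{i-1}q_{i+1}=\pm a_{i+1}N$ correctly identifies $a_{i+1}=1$ with the adjacency $P_{i-1}\sim_N P_{i+1}$, and condition (1) of the definition then matches well-directedness while condition (2) matches the monotonicity of the denominators. (Minor indexing caveats: $q_1=a_1N$ equals $q_0$ when $a_1=1$, and the meaning of ``between $P_{-1}=\infty$ and $P_0$'' for the first edge needs a word, but these are cosmetic.)

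The genuine gap is in part (3), and you have named it yourself without closing it. The whole difficulty of constructing an $\f_{p^l}$-expansion of an arbitrary real $x$ is that the Farey-mediant candidate $P_{i+1}=a(p_i,q_i)+\ep(p_{i-1},q_{i-1})$ dictated by the greedy choice need not be a vertex of $\f_{p^l}$: one must check $p\nmid ap_i+\ep p_{i-1}$, and this can fail for exactly one residue class of $a$ modulo $p$ (this is the content of Theorem \ref{distinctconvergents}(1) and the mechanism behind Lemma \ref{k-thfareysum}). The required argument is the one implicit in Corollary \ref{algoformaximumflips}: since at most one value of $a$ in $\{\lfloor 1/|y_i|\rfloor,\lfloor 1/|y_i|\rfloor+1\}$ (resp.\ $\{1/|y_i|-1,\,1/|y_i|\}$) is excluded mod $p$, an admissible neighbour capturing $x$ always exists; one must then verify that replacing $a$ by $a\pm1$ still yields $a\ge1$, still satisfies $a_i+\ep_i\ge1$ and $a_i+\ep_{i+1}\ge1$, and keeps $x$ in the interval cut off by the new $P_{i+1}$, so that well-directedness and the trapping $x\in(P_i,P_{i+1})$ (hence $|x-P_i|\le N/(q_iq_{i+1})\to0$) survive. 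Writing ``I expect the greedy choice is always available'' records the obstacle but does not overcome it; as it stands, part (3) of your proof is a plan, not a proof.
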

\section{ $\f_{p^l}$-Continued Fractions with maximum +1}
\begin{defi}
	Suppose $ R_1,R_2\in\mX_{p^l}$ are such that $R_1\sim_{p^l} R_2$ in $\f_{{p^l}}$, where $R_i=r_i/s_i$ with $\mathrm{gcd}(r_i,s_i)=1$ for $i=1,2$. Then $R_1\oplus R_2$ denotes a rational number  $r/s$ where $r=r_1+r_2$ and $s=s_1+s_2$ and $R_2\ominus R_1$ denotes a rational number  $r'/s'$ where $r'=r_2-r_1$ and $s'=s_2-s_1.$ Operations $\oplus$ and $\ominus$ are referred to as \textit{Farey sum}\index{Farey sum} and \textit{Farey difference}\index{Farey difference} of two rational numbers. 
\end{defi}

\begin{lemma}\label{twoways}
	Let $P,Q\in\mX_{p^l}$ be two adjacent vertices in $\f_{p^l}$ and let
	$$P= P_1\to P_2\to\cdots\to P_{n+1}=Q$$ be a path in $\f_{p^l}$ such that $q_i<q_{i+1},~1\le i\le n,$ where $P_i=p_i/q_i,~\mathrm{gcd}(p_i,q_i)=1.$ Then $n\le 2$ and if $n=2,$  then $P_2=Q\ominus P.$
\end{lemma}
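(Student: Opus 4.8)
The plan is to work in the hyperbolic-geometric picture of $\f_{p^l}$, where each edge $R_1\sim_{p^l}R_2$ is drawn as the geodesic joining $R_1$ and $R_2$ on $\R\cup\{\infty\}$ and, by Proposition~\ref{nocrossing}, no two edges cross. The backbone of the argument is the elementary determinant computation that locates all common neighbours of two adjacent vertices. Writing $P=p_P/q_P$ and $Q=p_Q/q_Q$ in lowest terms with $p_Pq_Q-q_Pp_Q=\pm p^l$, a vertex $R=u/v$ adjacent to both $P$ and $Q$ satisfies two equations of the form $p_Pv-q_Pu=\pm p^l$ and $p_Qv-q_Qu=\pm p^l$; solving this $2\times 2$ linear system by Cramer's rule and running through the four sign choices shows that, as a point of $\Q$, $R$ must be either the Farey sum $P\oplus Q$, whose denominator is $q_P+q_Q$, or the Farey difference $Q\ominus P$, whose denominator is $q_Q-q_P$. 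I would establish this first, since both halves of the conclusion rest on it.

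Granting this, the case $n=2$ is immediate. A path $P\to P_2\to Q$ with $q_P<q_2<q_Q$ makes $P_2$ a common neighbour of $P$ and $Q$, hence $P_2\in\{P\oplus Q,\ Q\ominus P\}$. The mediant is ruled out because $q_{P\oplus Q}=q_P+q_Q>q_Q>q_2$ contradicts the strict increase of denominators, so necessarily $P_2=Q\ominus P$. Along the way one must check that $Q\ominus P$ is a bona fide vertex of $\f_{p^l}$, i.e.\ that after cancellation its denominator is still a multiple of $p^l$; this follows from $p_Pq_Q-q_Pp_Q=\pm p^l$ together with the conditions $p\nmid p_P$ and $p\nmid p_Q$ forced by $\gcd(p_i,q_i)=1$ and $p^l\mid q_i$.

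The substantive part is the bound $n\le 2$, and this is where I expect the real work. The first step is to show that no intermediate vertex can lie strictly between $P$ and $Q$: the hypothesis gives $q_i<q_Q$ for $2\le i\le n$, whereas any vertex of $\f_{p^l}$ lying strictly in the interval bounded by $P$ and $Q$ has denominator at least $q_P+q_Q$ (the usual ``mediant has least denominator'' fact, transported through the embedding of $\f_{p^l}$ into the Farey graph). Thus each intermediate $P_i$ lies outside the closed interval with endpoints $P$ and $Q$, and the path must connect the two ends of the arc $PQ$ while its vertices sit in the complementary part of the boundary. The obstacle is that non-crossing by itself still permits a path to run along a long ``fan'' of consecutive neighbours of $P$, so to force $n\le 2$ I would combine three ingredients: the non-crossing property; the observation that, since $q_i<q_{i+1}$, each $P_i$ is one of the at most two neighbours of $P_{i+1}$ of smaller denominator; and the congruence constraint $p\nmid p_i$ coming from the definition of $\mX_{p^l}$. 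Controlling the interaction of these conditions is genuinely more delicate here than in the earlier cases $p^l=2,3$, and is the point the proof must navigate with care; the possibility $P=\infty$ (where $q_P=0$) I would dispatch separately as an easier boundary case.
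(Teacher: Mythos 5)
Your first two steps are sound: the Cramer/determinant computation does show that any vertex adjacent to both $P$ and $Q$ is, as a reduced fraction, either $P\oplus Q$ or $Q\ominus P$, and the denominator count then settles the case $n=2$ exactly as you say. (The side remark about verifying $Q\ominus P\in\mX_{p^l}$ is not needed in this direction --- you are handed $P_2$ as a vertex and merely identify it --- and your justification of it is in any case incomplete: $p\nmid p_P$ and $p\nmid p_Q$ do not imply $p\nmid(p_Q-p_P)$, and indeed $Q\ominus P$ can fail to lie in $\mX_{p^l}$, e.g.\ $P=1/7$, $Q=8/49$ in $\f_7$.) Note also that the paper states this lemma with no proof at all, so your argument has to stand entirely on its own.

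It does not, and the gap sits precisely where you place it. The whole content of the lemma is the bound $n\le 2$, and that is the step you leave as ``genuinely more delicate\dots the point the proof must navigate with care'': you list three ingredients but never show how they combine. Worse, the obstruction you yourself name --- a walk along a fan of consecutive neighbours of $P$ --- is \emph{not} excluded by the stated hypotheses once $p\ge 5$. Concretely, in $\f_7$ take $P=1/7$ and $Q=6/35$, which are adjacent since $1\cdot35-7\cdot6=-7$; then
$$\frac{1}{7}\to\frac{4}{21}\to\frac{5}{28}\to\frac{6}{35}$$
is a path in $\f_7$ (consecutive determinants $-7,\,7,\,7$, every vertex reduced with denominator divisible by $7$) whose denominators $7<21<28<35$ strictly increase, so $n=3$. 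More generally the fan $(\oplus_k P)\oplus R$ consists entirely of neighbours of $P$, consecutive members are mutually adjacent with increasing denominators, and the congruence constraint on numerators only removes one residue class of $k$ modulo $p$, so such paths of length up to $p-1$ exist. Hence no proof of the statement as literally written can succeed; the bound $n\le2$ requires an additional hypothesis (the natural one being that the path is well directed, which does kill the fan walk: after one step back along the fan the chord $P\sim_{p^l}P_{i+1}$ is present while the next step is direction retaining). A correct write-up must make that hypothesis explicit and use it, which your outline does not.
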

Since every element of $\mX_{p^l}$ has finite $\f_{{p^l}}$-continued fraction expansions, we have the following definition.

\begin{defi}Suppose $x\in\mX_{p^l}.$ An $\f_{p^l}$-continued fraction of $x$ not ending with $1/1$ is said to be an {\it $\f_{p^l}$-continued fraction with maximum $+1$} if it has maximum number of positive partial numerators excluding $\ep_1$, the first partial numerator, among all its $\f_{p^l}$-continued fraction expansions.
	
	An infinite $\f_{p^l}$-continued fraction
	$$\frac{1}{0+}~\frac{p^l}{b+}~\frac{\epsilon_{1} }{a_{1}+}~\frac{\epsilon_{2}}{a_{2}+}~\cdots\frac{\epsilon_{n}}{a_{n}+}\cdots$$ is said to be an \textit{ $\f_{p^l}$-continued fraction with maximum $+1$} if 
	$$\frac{1}{0+}~\frac{p^l}{b+}~\frac{\epsilon_{1} }{a_{1}+}~\frac{\epsilon_{2}}{a_{2}+}~\cdots\frac{\epsilon_{i}}{a_{i}}$$
	is an  $\f_{p^l}$-continued fraction with maximum $+1$ of the $i$-th convergent unless $(\ep_i,a_i)=(1,1).$
	\end{defi} 
 
%\begin{defi}The path 
%	$\infty\to P_0\to P_1\to\cdots\to x,$
%	corresponding to an $\f_{p^l}$-continued fraction (finite or infinite) with maximum $+1$ is a well directed path from $\infty$ to $x$ having maximum direction changing edges excluding the edge $P_0\sim_{p^l} P_1$ and $x$ is not a direction changing vertex lying on the first circle. We call it a well directed path with maximum direction changing edges from $\infty$ to $x$.
%	
%\end{defi}
%  

\begin{defi}
	The path associated to an $\mX_{p^l}$-continued fraction with maximum $+1$ is called a {\it well directed path with maximum direction changing edges}.
\end{defi}	
%   \begin{remark} Suppose
%   	$\frac{1}{0+}~\frac{p^l}{b+}~\frac{\epsilon_{1} }{a_{1}+}~\frac{\epsilon_{2}}{a_{2}+}~\cdots\frac{\epsilon_{n}}{a_{n}+}\cdots$
%   	is an $\f_{p^l}$-continued fraction with maximum $+1.$ Then for certain $i\ge1,$
%   	$$\frac{1}{0+}~\frac{p^l}{b+}~\frac{\epsilon_{1} }{a_{1}+}~\frac{\epsilon_{2}}{a_{2}+}~\cdots\frac{\epsilon_{i}}{a_{i}}$$ 
%   	need not be the $\f_{p^l}$-continued fraction with maximum $+1$ of the $i$-th convergent. In fact, if it is not, then $\ep_i/a_i=1/1$ so that $$\frac{1}{0+}~\frac{p^l}{b+}~\frac{\epsilon_{1} }{a_{1}+}~\frac{\epsilon_{2}}{a_{2}+}~\cdots\frac{\epsilon_{i-1}}{(a_{i-1}+1)}$$ is the $\f_{p^l}$-continued fraction with maximum $+1$ of the $i$-th convergent.
%   \end{remark}

In the following subsections, we discuss uniqueness of $\f_{p^l}$-continued fractions with maximum $+1.$

\subsection{Uniqueness of $\f_{p^l}$-Continued Fractions for  $x\in\mX_{p^l}$ or $x\in \R\setminus\Q$ }

\vspace{2mm}
Suppose $P$ and $R$ are adjacent vertices in $\f_{p^l}$. Then for $k\ge1,$ $\underbrace{P\oplus\cdots\oplus P}_{k\textnormal{-times}}\oplus R$ is a rational number of the form $\frac{ku+r}{kv+s}$, where $P=u/v$ and $R=r/s$ with $\mathrm{gcd}(u,v)=1=\mathrm{gcd}(r,s).$ For $k\ge2,$ we denote it by $(\oplus_k P)\oplus R$. 
\vspace{2mm}

\begin{lemma}\label{k-thfareysum}
	Suppose $P,R\in\mX_{p^l}$ are adjacent in $\f_{p^l}.$ Then there exists a natural number $k<p$ such that $(\oplus_k P)\oplus R\not\in\mX_{p^l}$.
\end{lemma}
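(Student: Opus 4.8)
The plan is to identify a single forced value of $k$ and then to use the adjacency relation twice: once to control $\gcd(ku+r,kv+s)$, and once to pin the $p$-adic valuation of $kv+s$. Write $P=u/v$ and $R=r/s$ in lowest terms. Membership in $\mX_{p^l}$ gives $p^l\mid v$ and $p^l\mid s$, and since $\gcd(u,v)=\gcd(r,s)=1$ this forces $p\nmid u$ and $p\nmid r$. Adjacency $P\sim_{p^l}R$ means $rv-su=\pm p^l$. I write $v_p(\cdot)$ for the exponent of $p$ in a nonzero integer, so $v_p(v),v_p(s)\ge l$.

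First I would record a membership test for the candidate $(\oplus_k P)\oplus R=\frac{ku+r}{kv+s}$. The identity $v(ku+r)-u(kv+s)=rv-su=\pm p^l$ shows that $\gcd(ku+r,kv+s)$ divides $p^l$, hence equals $p^{\,j}$ with $j=\min\{v_p(ku+r),v_p(kv+s)\}$. Therefore the reduced denominator has $p$-valuation $v_p(kv+s)-j$, and one gets $(\oplus_k P)\oplus R\in\mX_{p^l}$ if and only if $v_p(kv+s)-v_p(ku+r)\ge l$.

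Next I would choose $k$. Since $p\nmid u$ and $p\nmid r$, the congruence $ku+r\equiv0\pmod p$ has a unique solution $k_0$ modulo $p$, and $k_0\not\equiv0$; thus $k_0\in\{1,\dots,p-1\}$ is a natural number with $k_0<p$, and $v_p(k_0u+r)\ge1$. The crucial step—the one I expect to carry the weight of the proof—is to show $v_p(k_0v+s)=l$ \emph{exactly}. For this I apply the determinant identity again with $k=k_0$, in the form $(k_0u+r)v-(k_0v+s)u=\pm p^l$. Here $v_p\big((k_0u+r)v\big)=v_p(k_0u+r)+v_p(v)\ge1+l>l$, whereas the right-hand side has valuation exactly $l$; since $p\nmid u$, the remaining term $(k_0v+s)u$ is forced to have valuation exactly $l$, giving $v_p(k_0v+s)=l$.

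Combining the two facts, the reduced denominator of $(\oplus_{k_0}P)\oplus R$ has valuation $l-\min\{v_p(k_0u+r),l\}\le l-1<l$, so $p^l$ does not divide it and $(\oplus_{k_0}P)\oplus R\notin\mX_{p^l}$, as required. I expect the only genuine care to be in the valuation bookkeeping—in particular allowing $v_p(v)$ and $v_p(s)$ to exceed $l$ and never dividing by the wrong power of $p$—while the conceptual core is that the single determinant identity does double duty, first bounding the gcd and then forcing $v_p(k_0v+s)=l$.
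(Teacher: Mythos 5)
Your proof is correct and follows essentially the same route as the paper's: both choose $k<p$ as the unique nonzero residue solving $ku+r\equiv 0\pmod p$, which exists because $p\nmid u$ (and $p\nmid r$). The paper stops there with ``the result follows,'' whereas you supply the valuation bookkeeping (in particular that $v_p(kv+s)=l$ exactly, via the determinant identity) needed to justify that last step; this is a completion of the same argument, not a different one.
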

\begin{proof}
	Let $P=u/v$ and $R=r/s$ be in $\mX_{p^l}$, where $\mathrm{gcd}(u,p)=1=\mathrm{gcd}(r,p).$ Suppose $\bar{x}$ is the congruence class of $x$ modulo $p.$ Then $$\{\overline{r},\overline{u+r},\overline{2u+r},\dots, \overline{(p-1)u+r}\}=\{\overline{0},\overline{1},\dots, \overline{p-1}\}.$$
	Hence, there is a positive integer $k<p$ such that $\overline{ku+r}=\overline{0}$ and the result follows.
\end{proof}	

	\begin{proposition}\label{nonuniquehalfpoint}
	Let $x\in\mX_{p^l}$. If $x=\lfloor p^lx\rfloor/p^l\oplus (\lfloor p^lx\rfloor+1)/p^l$, where $\mathrm{gcd}(\lfloor p^lx\rfloor,p)=1=\mathrm{gcd}(\lfloor p^lx\rfloor+1,p)$. Then $\f_{p^l}$-continued fraction expansion of $x$ with maximum $+1$ is not unique. 
\end{proposition}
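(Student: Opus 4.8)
The plan is to determine \emph{all} $\f_{p^l}$-continued fraction expansions of $x$ explicitly and show there are exactly two, both with maximum $+1$. Write $N=p^l$ and $m=\lfloor Nx\rfloor$. The hypothesis gives
$$x=\frac{m}{N}\oplus\frac{m+1}{N}=\frac{2m+1}{2N},$$
and from $\gcd(m,p)=\gcd(m+1,p)=1$ both parents $\tfrac{m}{N},\tfrac{m+1}{N}$ lie in $\mX_{p^l}$ and are each adjacent to $\infty$ (they are neighbours of $\infty$, and $(m+1)N-Nm=N$ shows they are adjacent to each other, so $x$ is their mediant and adjacent to both). First I would record that $\tfrac{2m+1}{2N}$ is already reduced: since $x\in\mX_{p^l}$ its reduced denominator is divisible by $N$, and as $2m+1$ is odd, $\gcd(2m+1,2N)=\gcd(2m+1,N)$ divides $N=p^l$; divisibility by $N$ of $2N/\gcd(2m+1,N)$ then forces $\gcd(2m+1,N)=1$, so the reduced denominator of $x$ equals $2N$.

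The crux is a divisibility observation that forces every expansion to have length one. By Theorem \ref{main1}, each $\f_{p^l}$-continued fraction of $x$ corresponds to a well directed path $\infty=P_{-1}\to P_0\to\cdots\to P_n=x$ with $P_i=p_i/q_i$ and $q_{-1}<q_0<\cdots<q_n$ (Theorem \ref{distinctconvergents}(2)). Every $P_i$ lies in $\mX_{p^l}$, so each $q_i$ is a positive multiple of $N$; moreover $q_0=N$ because $(p_0,q_0)=(b,N)$, and $q_n=2N$ since that is the reduced denominator of $x$. As there is no multiple of $N$ strictly between $N$ and $2N$, the denominator chain can only be $N,2N$, whence $n=1$ with $q_0=N$, $q_1=2N$. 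Thus every expansion has the form $\frac{1}{0+}~\frac{N}{b+}~\frac{\ep_1}{a_1}$.

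It then remains to solve for the data. From $q_1=a_1N=2N$ I get $a_1=2$, and from $p_1=a_1b+\ep_1=2m+1$ with $\ep_1\in\{\pm1\}$ the only possibilities are $(b,\ep_1)=(m,1)$ and $(b,\ep_1)=(m+1,-1)$. Both satisfy the defining conditions: $\gcd(b,p)=1$ holds by hypothesis, $a_1+\ep_1\in\{1,3\}\ge1$, and $\gcd(p_1,q_1)=1$ since $\tfrac{2m+1}{2N}$ is reduced. Hence $x$ has exactly the two expansions
$$\frac{1}{0+}~\frac{N}{m+}~\frac{1}{2}\qquad\text{and}\qquad\frac{1}{0+}~\frac{N}{(m+1)+}~\frac{-1}{2}.$$
Because $n=1$, neither has a partial numerator beyond $\ep_1$, so each has zero positive partial numerators excluding $\ep_1$ (trivially the maximum), and since $a_1=2$ neither ends in $1/1$; therefore both are $\f_{p^l}$-continued fractions with maximum $+1$. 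As they are distinct, the expansion with maximum $+1$ is not unique.

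I expect the only load-bearing step to be the denominator argument of the second paragraph: it is what rules out longer expansions and hence any competitor with more positive partial numerators that might disqualify the two exhibited ones. The rest is verification, but one must be careful that the membership $x\in\mX_{p^l}$ is exactly what forces $\gcd(2m+1,p)=1$ and the reduced denominator $2N$; without it the determination of $a_1$ (and the whole count) would change.
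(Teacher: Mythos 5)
Your proof is correct and follows essentially the same route as the paper: both arguments use the strictly increasing denominators of the convergents (together with $x$ having reduced denominator $2p^l$) to force every expansion to have the form $\frac{1}{0+}~\frac{p^l}{b+}~\frac{\ep_1}{2}$, yielding exactly the two expansions $\frac{1}{0+}~\frac{p^l}{\lfloor p^lx\rfloor+}~\frac{1}{2}$ and $\frac{1}{0+}~\frac{p^l}{(\lfloor p^lx\rfloor+1)+}~\frac{-1}{2}$. Your write-up simply makes explicit the divisibility and parameter-solving steps that the paper leaves implicit.
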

\begin{proof}
Let $p_i/q_i$ be a sequence of $\f_{p^l}$-convergents of a real number. Then $q_i<q_{i+1}$. Therefore, the only possible choices of  $\f_{p^l}$-continued fractions of 	$x$ are $\frac{1}{0+}~\frac{p^l}{\lfloor p^lx\rfloor+}~\frac{+1 }{2}$ and $\frac{1}{0+}~\frac{p^l}{(\lfloor p^lx\rfloor+1)+}~\frac{-1 }{2}$, which $\f_{p^l}$-continued fractions with maximum $+1.$
\end{proof}
	If $ x\in\R\setminus\Q$ or $x\in\mX_{p^l}$. Let $P_0=a/p^l$ and $Q_0=(a+1)/p^l$ be  vertices in $ \f_{p^l}$ such that $P_0<x<P_0\oplus Q_0$. Then there is a well directed path from $\infty$ to $x$ via $P_0$ (by Lemma \cite[Lemma 4.1 ]{seemafnpart1}.  In fact, we have the following:
\begin{lemma}\label{lemma_uniquemaximumflips}
	Suppose $ x\in\R\setminus\Q$ or $x\in\mX_{p^l}$. Let $P_0=a/p^l$ and $Q_0=(a+1)/p^l$ be  vertices in $ \f_{p^l}$ such that $P_0<x<P_0\oplus Q_0$.
Then there is a unique  well directed path from $\infty$ to $x$ via $P_0$ having  maximum direction changing edges. 
\end{lemma}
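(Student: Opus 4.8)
\begin{pf}
The plan is to construct the well directed path convergent by convergent and to show that, at each step, the demand to maximize direction changing edges (equivalently, positive partial numerators) leaves exactly one admissible choice. I would run an induction carrying the invariant that after the $i$-th convergent $P_i=p_i/q_i$ has been produced, the two latest convergents $P_{i-1},P_i$ trap $x$, with $x$ lying on the appropriate side of the semiconvergent $P_i\oplus P_{i-1}$; the hypothesis $P_0<x<P_0\oplus Q_0$ furnishes the base case $i=0$ (with $P_{-1}=\infty$ and $Q_0=P_0\oplus P_{-1}$). Each admissible continuation has the continued-fraction form $P_{i+1}=a_{i+1}P_i+\ep_{i+1}P_{i-1}$: with $\ep_{i+1}=+1$ it is the block of repeated Farey sums $(\oplus_{a_{i+1}}P_i)\oplus P_{i-1}$, and with $\ep_{i+1}=-1$ it is the corresponding Farey-difference continuation, which by Lemma~\ref{twoways} is the only other possibility compatible with strictly increasing denominators. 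Thus at each stage there are only finitely many candidates to examine.

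The determinacy of the choice comes from the interplay of three facts. First, keeping $x$ trapped limits how many Farey sums may be taken on the $+1$ side. Second, Lemma~\ref{k-thfareysum} shows that $(\oplus_a P_i)\oplus P_{i-1}$ leaves $\mX_{p^l}$ for some $a<p$, so a run of $+1$ steps is automatically bounded and, at the exact index where the next Farey sum would leave $\mX_{p^l}$, the expansion is forced onto the $-1$ continuation singled out by Lemma~\ref{twoways}. Third, the definition of a well directed path forces an edge to be direction changing whenever $P_{i-1}\sim_{p^l}P_{i+1}$, so part of the sign pattern is not free. I would then prove that the rule ``take the direction changing continuation whenever trapping and membership in $\mX_{p^l}$ permit it, and otherwise take the unique forced direction retaining step'' determines both $a_{i+1}$ and $\ep_{i+1}$ uniquely and preserves the invariant, so that the constructed object is a genuine $\f_{p^l}$-continued fraction with the maximal number of positive partial numerators.

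Existence and termination are then routine: by Theorem~\ref{distinctconvergents}(2) the denominators strictly increase, by Lemma~\ref{k-thfareysum} a legitimate next convergent always lies in $\mX_{p^l}$ so the construction never stalls, and by Theorem~\ref{main1} the resulting path is well directed and either terminates at $x$ (when $x\in\mX_{p^l}$) or converges to $x$ (when $x\in\R\setminus\Q$). For uniqueness I would take two well directed paths through $P_0$ with maximum direction changing edges and inspect the first index $i+1$ at which they differ; since they share $P_{i-1}$ and $P_i$, the determinacy above forces their continuations to coincide, a contradiction. I expect the main obstacle to be exactly this local determinacy: showing that the greedy, locally maximal choice is also globally maximal and unique, and in particular resolving the boundary case in which Lemma~\ref{k-thfareysum} pushes the Farey sum out of $\mX_{p^l}$ at the very step where the position of $x$ would otherwise call for one more direction changing edge, so the competition between the last available $+1$ step and the forced $-1$ step is settled unambiguously. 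The no-crossing property of Proposition~\ref{nocrossing} should supply the geometric input that rules out spurious alternative continuations and keeps the trapping invariant intact.
\end{pf}
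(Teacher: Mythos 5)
Your proposal is a plan rather than a proof, and the step you yourself flag as ``the main obstacle'' --- that the greedy, locally maximal choice is globally maximal and unique --- is precisely the entire content of the lemma, and it is left unresolved. Your uniqueness argument is circular: you take two maximal paths, look at the first index where they differ, and appeal to ``the determinacy above,'' but that determinacy was only announced (``I would then prove that the rule \dots determines both $a_{i+1}$ and $\ep_{i+1}$ uniquely''), never established. In particular, nothing in your outline rules out the scenario in which two genuinely different well directed paths through $P_0$ achieve the same (maximal) number of direction changing edges, one taking an early $-1$ step that is later compensated by extra $+1$ steps. That is the scenario the proof must kill, and a purely local/greedy invariant does not obviously kill it.

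The paper's argument is a global comparison at the first divergence, and it uses Lemma~\ref{k-thfareysum} in a different and essential way from yours. Assuming two maximal well directed paths $\{P_i\}$ and $\{P_i'\}$ through $P_0$ first differ at index $k$ with $P_{k-1}<P_k<P_k'$, one shows $x\ne P_{k-1}\oplus P_k'$ and then that necessarily $P_k=P_{k-1}\oplus P_k'$ (otherwise one of the two paths fails to be well directed, by the trapping of $x$ and non-crossing of edges). Splicing $P_k'\to P_k\to P_{k+1}\to\cdots$ onto the primed path produces a path with two extra direction changing edges, which cannot be well directed by maximality; this forces $P_{k+1}=P_k\oplus P_k'=\oplus_2 P_{k-1}\oplus P_k'$, and iterating the splice forces $P_{k+i-1}=\oplus_i P_{k-1}\oplus P_k'$ for all $1\le i\le p-1$. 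That entire chain must lie in $\mX_{p^l}$, contradicting Lemma~\ref{k-thfareysum}. You invoke Lemma~\ref{k-thfareysum} only to bound a run of consecutive $+1$ partial numerators inside one expansion, which is a different (and weaker) use; without the splicing/comparison argument your outline has no mechanism to derive a contradiction from the coexistence of two distinct maximal paths. To repair the proposal you would need to supply exactly this comparison argument (or an equivalent exchange argument), not merely the greedy construction.
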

\begin{proof}	Suppose $x\in\R$ and $x\not\in\Q\setminus\mX_{p^l}$ and there are two well directed paths from $\infty$ to $x$ with maximum direction changing edges via $P_0$. Let $\{P_i\}_{i\ge0}$ and $\{P_i'\}_{i\ge0}$ be the corresponding sequences of vertices, respectively. 
Now suppose  $P_i=P_i',~0\le i\le k-1,$ and $P_k\ne P_k'$. Without loss of generality, we may assume that $P_{k-1}<P_k<P_k'$ so that $P_{k-1}<P_k\le P_{k-1}\oplus P_{k}'<P_k'.$ First, we claim that  $x\ne P_{k-1}\oplus P_{k}'.$ If $ P_{k-1}\oplus P_{k}'\not\in\mX_{p^l}$, then $x\ne P_{k-1}\oplus P_{k}'$ as $x\not\in\Q\setminus\mX_{p^l}$. If $P_{k-1}\oplus P_{k}'\in\mX_{p^l}$  and $x=P_{k-1}\oplus P_{k}'$, then 
	$$\infty\to P_0\to\cdots\to P_{k-1}\to x$$ is the only path with maximal direction changing edges. This contradicts our assumption. Now we claim that $P_k= P_{k-1}\oplus P_k'$. Suppose $P_k\ne P_{k-1}\oplus P_k'$. If $P_k<x<P_{k-1}\oplus P_k'$, then any path through $P_k'$ in the direction of $x$ is not well directed. Similarly, if $P_{k-1}\oplus P_k'<x<P_k'$, then there is no well directed path to $x$ through $P_k$. Thus,  $P_k= P_{k-1}\oplus P_k'.$ Now, observe that the following path 
	\begin{equation}\label{extraflips}
	\infty\to P_0'\to\cdots\to P_{k-1}'\to P_k'\to P_k\to P_{k+1}\to\cdots\to x,
	\end{equation}
	where $P_k= P_{k-1}\oplus P_k'$, is a path from $\infty$ to $x$ having two additional direction changing edges and hence,  not well directed. Therefore, $P_{k+1}=P_{k}\oplus P_{k}'=\oplus_2P_{k-1}\oplus P_k'$ and $P_{k+2}$ is direction retaining with respect to the path
	$$\infty\to P_0'\to\cdots P_{k-1}'\to P_k'\to P_k\to P_{k+1}.$$
	Now consider the path
	\begin{equation}\label{extraflips1}
	\infty\to P_0'\to\cdots P_{k-1}'\to P_k'\to P_{k+1}=(\oplus_2P_{k-1}\oplus P_k')\to P_{k+2}\to\cdots\to x
	\end{equation}
	having two additional direction changing edges. Again, the path is not well directed so that $P_{k+2}=\oplus_3 P_{k-1}\oplus P_k'$. By repeating this argument, we get $P_{k+i-1}=\oplus_i P_{k-1}\oplus P_k'$ for $1\le i\le p-1$ which contradicts Lemma \ref{k-thfareysum}. 
\end{proof}
\begin{remark}\label{remark_maximumdchange}	Suppose $P_0=a/p^l,P_0'=(a+1)/p^l\in \mX_{p^l}$ and $x\in\R$ are such that $P_0<x<P_0\oplus P_0'$. Let   $P_0\sim_{p^l} P$ and $P_0\sim_{p^l} P'$ be two consecutive edges emanating from $P_0$ such that $P'<x<P.$ Then the following statements are easy to observe:
	\begin{enumerate}
		\item For some positive integer $k,$ $P=(\oplus_k P_0)\oplus P_0'$ and there is a well directed path from $\infty$ to $x$ through $P_0'$ if and only if for each $i,$ $1\le i\le k,$ $(\oplus_i P_0)\oplus P_0'\in\mX_{p^l}.$
		\item If there is a well directed path from $\infty$ to $x$ through $P_0'$ then the path is via $P.$
	\end{enumerate}
	
\end{remark}

\begin{proposition}\label{prop_maxdchngP0}
	Suppose $P_0=a/p^l,P_0'=(a+1)/p^l\in \mX_{p^l}$ and $x\in\R$ are such that $P_0<x< P_0'$. Then the well directed path from $\infty$ to $x$ with maximum direction changing edges is via $P_0$ if and only if $P_0<x<P_0\oplus P_0'$.
\end{proposition}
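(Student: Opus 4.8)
The plan is to write $M:=P_0\oplus P_0'$ for the Farey mediant, so that $P_0<M<P_0'$, and to read the assertion as: the well directed path with maximum direction changing edges from $\infty$ to $x$ is via $P_0$ if and only if $x\in(P_0,M)$. I would prove the forward implication ($x\in(P_0,M)\Rightarrow$ via $P_0$) directly, and then deduce the reverse implication from it by means of an order-reversing automorphism of $\f_{p^l}$.

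For the forward implication, Lemma \ref{lemma_uniquemaximumflips} already supplies a unique well directed path via $P_0$ with maximum direction changing edges; what remains is to show that it carries strictly more direction changing edges (equivalently, more $+1$'s among $\epsilon_2,\epsilon_3,\dots$) than any well directed path via $P_0'$. Since $x<M$ while the least neighbour of $P_0'$ lying below $P_0'$ is $M$ itself, any well directed path via $P_0'$ must begin $\infty\to P_0'\to M$; by Theorem \ref{distinctconvergents}(2) (strictly increasing denominators) and Remark \ref{remark_maximumdchange} such a path exists if and only if there is a well directed descent from $M$ into $(P_0,M)$ reaching $x$, which is exactly the tail the via-$P_0$ path follows once it reaches $M$. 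Thus either no well directed path via $P_0'$ exists, and the maximum path is via $P_0$ by default, or one exists, in which case the via-$P_0$ maximum path itself passes through $M$ (going $\infty\to P_0\to M$, since $M$ is the extreme neighbour of $P_0$ below $P_0'$), and the two paths share every convergent $M,T,\dots,x$ from $M$ onward, where $T<M$ is the first vertex of the common descent.

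In that second case I would compare the two $\epsilon$-sequences term by term. The two paths differ only in their zeroth convergent ($P_0$ versus $P_0'$) and agree on all later convergents, so $\epsilon_j$ agrees for every $j\ge 3$ (each such $\epsilon_j$ depends only on convergents of index $\ge 1$, which coincide), while $\epsilon_1$ is excluded from the count by definition. The single counted discrepancy is $\epsilon_2$, the sign of the step $M\to T$: relative to the predecessor $P_0<M$ one has $P_0<T<M$, so the step is direction changing ($\epsilon_2=+1$), whereas relative to $P_0'>M$ one has $T<M<P_0'$, so it is direction retaining ($\epsilon_2=-1$). Hence the via-$P_0$ path has exactly one more $+1$ and strictly dominates. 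The main obstacle is precisely this bookkeeping: one must verify that the common descent from $M$ to $x$ is forced to be a single shared vertex sequence whose flip-count in positions $\ge 3$ is independent of the predecessor of $M$, so that the only counted difference is the isolated flip at $\epsilon_2$; here Lemma \ref{lemma_uniquemaximumflips}, Remark \ref{remark_maximumdchange}(2), and Lemma \ref{k-thfareysum} (which pins down exactly where via-$P_0'$ descents fail to continue) do the work.

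For the reverse implication I would exploit the symmetry $\phi(z)=c-z$ for an integer $c$, which is an automorphism of $\f_{p^l}$: it fixes $\infty$, preserves adjacency, preserves $\mX_{p^l}$ (because $p\nmid a$ and $p\nmid a+1$ force $\phi(P_0),\phi(P_0')\in\mX_{p^l}$), and, being order reversing, preserves betweenness and therefore the classification of edges into direction changing and retaining, hence preserves direction-changing counts and well directedness. Since $\phi$ interchanges $P_0$ and $P_0'$ and maps the right half $(M,P_0')$ onto a left half $(\phi(P_0'),\phi(M))$, applying the forward implication to $\phi(x)$ shows that for $x\in(M,P_0')$ the maximum path is via $P_0'$, hence not via $P_0$; and at $x=M$ the maximum expansion is non-unique with one branch via each of $P_0$ and $P_0'$ by Proposition \ref{nonuniquehalfpoint}, so again it is not solely via $P_0$. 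Combining the two implications yields the stated equivalence.
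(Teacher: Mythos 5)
Your overall plan (compare the direction-changing counts of the maximal path via $P_0$ against any well directed path via $P_0'$, then get the converse by symmetry) is the paper's, but the structural picture on which your count rests is false. You claim the via-$P_0$ maximal path begins $\infty\to P_0\to M$ with $M=P_0\oplus P_0'$, and that the two paths ``differ only in their zeroth convergent and agree on all later convergents.'' Neither holds in general: the successor of $P_0$ on the maximal path is the neighbour of $P_0$ immediately above $x$, namely $P_1=(\oplus_k P_0)\oplus P_0'$ for some $k\ge 1$, and this equals $M$ only when $k=1$; it is the path via $P_0'$ that is forced (Remark \ref{remark_maximumdchange}) to descend through \emph{all} of $(\oplus_1 P_0)\oplus P_0'=M,\ (\oplus_2 P_0)\oplus P_0',\dots,(\oplus_k P_0)\oplus P_0'=P_1$ before the two paths merge at $P_1$. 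The paper's own example $x=7/27$ with $p^l=5$ exhibits this: $P_0=1/5$, $P_0'=2/5$, $M=3/10$, and the maximal path is $\infty\to 1/5\to 4/15\to 9/35\to\cdots$, which never visits $M$, while the path via $P_0'$ is $\infty\to 2/5\to 3/10\to 4/15\to\cdots$. The two paths have different lengths and different convergents in positions $\ge 1$, so your term-by-term identification of the $\epsilon_j$ for $j\ge 3$ does not apply. The correct bookkeeping (the paper's) is that every edge of the descent $P_0'\to M\to\cdots\to P_1$ is direction retaining, and the single edge $P_1\to P_2$ is direction changing relative to the via-$P_0$ path but direction retaining relative to the via-$P_0'$ path; hence the latter has exactly one fewer direction-changing edge. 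Your final tally of ``one more $+1$'' is correct, but your derivation of it is not.

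The converse also has a gap. A reflection $\phi(z)=c-z$ with $c\in\Z$ interchanges $P_0=a/p^l$ and $P_0'=(a+1)/p^l$ only if $p^l\mid 2a+1$, which generically fails, and the reflection about the true midpoint $(2a+1)/(2p^l)$ is not an automorphism of $\f_{p^l}$: for $P_0=1/5$, $P_0'=2/5$ it sends $4/15$ to $1/3\notin\mX_5$. No automorphism is needed here: the forward argument is left-right symmetric in $P_0$ and $P_0'$, so for $M<x<P_0'$ you simply rerun it with the two roles exchanged to conclude that the maximal path is via $P_0'$ and hence not via $P_0$; the boundary case $x=M$ is covered by Proposition \ref{nonuniquehalfpoint} when $M\in\mX_{p^l}$ and by the two expansions recorded in Remark \ref{longest cf1} when $M\in\Q\setminus\mX_{p^l}$.
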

\begin{proof}
	Let $P_0<x<P_0\oplus P_0'$. By Lemma \ref{lemma_uniquemaximumflips}, there is a unique well directed path from $\infty$ to $x$ via $P_0$ with maximum direction changing edges, and  assume that the path is given by
	\begin{equation}\label{pathfromP0}
\Theta\equiv\infty\to P_0\to P_1\to P_2\to\cdots\to x.
	\end{equation}
	Let  $k$ be the number of direction changing edges in this path excluding $P_0\sim_{p^l} P_1.$
	Now suppose there is a well directed path from $\infty$ to $x$ via $P_0'$. By Remark \ref{remark_maximumdchange}, the path via $P_0'$ is through $P_1$ and $P_0<x<P_1$ so that
	\begin{equation}\label{pathfromP0'}
\Theta'\equiv	\infty\to P_0'\to\cdots\to P_r'\to P_1\to P_2\to\cdots\to x
	\end{equation}
	is a well directed path with maximum direction changing edges through $P_0'.$ Since $P_0<x<P_1<P_0\oplus P_0'$ the edge $P_1\to P_2$ is direction changing relative to $\Theta_1$ in path \eqref{pathfromP0} and the edge $P_1\to P_2$ is direction retaining relative to $\Theta_{r+1}'$ in path \eqref{pathfromP0'}. Further, note that the edge $P_i'\to P_{i+1}'$ in the path \eqref{pathfromP0'} is direction retaining relative to $\Theta_{i}'$ for $0\le i \le r.$ Thus, the path \eqref{pathfromP0'} has $k-1$ direction changing edges, which is a contradiction. So a well directed path from $\infty$ to $x$ with maximum direction changing edges is only via $P_0.$
\end{proof}
 We summarize Lemma \ref{lemma_uniquemaximumflips}, Remark \ref{remark_maximumdchange} and Proposition \ref{nonuniquehalfpoint} and { \ref{prop_maxdchngP0} in the following theorem.
\begin{theorem}\label{uniquemaximumflips}Suppose $x\in\R\setminus\Q$ or $x\in\mX_{p^l}$ such that $x\ne\lfloor p^lx\rfloor/p^l\oplus (\lfloor p^lx\rfloor+1)/p^l$ $x\not\in\Q\setminus\mX_{p^l}.$ Then 
	 there is a unique  well directed path with  maximum direction changing edges from $\infty$ to $x$. Consequently, an $\f_{p^l}$-continued fraction expansion of $x$ with maximum $+1$   is unique.
\end{theorem}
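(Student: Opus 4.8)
The plan is to assemble Theorem~\ref{uniquemaximumflips} from the pieces already established, treating the two hypotheses as a reduction to the single clean case covered by Lemma~\ref{lemma_uniquemaximumflips}. First I would fix $x$ satisfying the stated conditions and locate the base vertices of the approach: set $a=\lfloor p^lx\rfloor$ so that $P_0=a/p^l$ and $P_0'=(a+1)/p^l$ are the two vertices of $\f_{p^l}$ with denominator $p^l$ that bracket $x$, i.e.\ $P_0<x<P_0'$. Every well directed path from $\infty$ to $x$ must pass through exactly one of $P_0$, $P_0'$ as its first finite vertex, because these are the only denominator-$p^l$ vertices adjacent to $\infty$ flanking $x$. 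So the uniqueness question splits according to which of these the path uses, and the goal is to show exactly one choice is compatible with the maximum-direction-changing condition and that within that choice the path is forced.

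Next I would run the trichotomy on the position of $x$ relative to the Farey sum $P_0\oplus P_0'$. If $P_0<x<P_0\oplus P_0'$, then Proposition~\ref{prop_maxdchngP0} tells me the maximum-direction-changing well directed path is \emph{only} via $P_0$, and Lemma~\ref{lemma_uniquemaximumflips} gives that this path is unique; symmetrically, if $P_0\oplus P_0'<x<P_0'$ the same two results applied with the roles of $P_0$ and $P_0'$ interchanged force a unique path via $P_0'$. The remaining case $x=P_0\oplus P_0'$ is precisely the excluded hypothesis: this is the exceptional half-point of Proposition~\ref{nonuniquehalfpoint}, where the expansion genuinely fails to be unique, and the assumption $x\ne\lfloor p^lx\rfloor/p^l\oplus(\lfloor p^lx\rfloor+1)/p^l$ rules it out. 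Thus on the hypotheses of the theorem we always land in one of the two symmetric cases and obtain a single well directed path with maximum direction changing edges.

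Finally I would translate the uniqueness of the path into uniqueness of the continued fraction. By Theorem~\ref{main1}, a finite $\f_{p^l}$-continued fraction and the well directed path from $\infty$ to its value determine one another, the convergents being exactly the vertices of the path; and a path having maximum direction changing edges corresponds by definition to an $\f_{p^l}$-continued fraction with maximum $+1$. Hence a unique such path yields a unique $\f_{p^l}$-continued fraction expansion with maximum $+1$, giving the ``Consequently'' clause. For the irrational case the same path-to-convergent dictionary applies truncation by truncation, and the infinite expansion is determined by the nested sequence of its finite truncations, each of which is the unique maximum-$+1$ expansion of the corresponding convergent.

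The main obstacle, and the step deserving the most care, is the clean reduction to Proposition~\ref{prop_maxdchngP0}: I must be sure that \emph{every} well directed path with maximum direction changing edges passes through $P_0$ or $P_0'$ as claimed, and that the role-reversed application of the proposition (for the case $P_0\oplus P_0'<x<P_0'$) is legitimate. The symmetry is intuitively a reflection exchanging $P_0\leftrightarrow P_0'$ and reversing the inequality orientation, but since Proposition~\ref{prop_maxdchngP0} and Remark~\ref{remark_maximumdchange} are phrased asymmetrically (the ``$P_0$'' side is privileged), I would verify explicitly that the hypotheses transport under this reflection rather than merely asserting it. Once that symmetry is justified, the rest is bookkeeping over the trichotomy and the dictionary of Theorem~\ref{main1}.
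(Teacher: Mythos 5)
Your assembly is essentially the paper's own proof: the paper offers no argument for this theorem beyond the remark that it ``summarizes'' Lemma~\ref{lemma_uniquemaximumflips}, Remark~\ref{remark_maximumdchange} and Propositions~\ref{nonuniquehalfpoint} and~\ref{prop_maxdchngP0}, and your trichotomy on the position of $x$ relative to $P_0\oplus P_0'$ (unique path via $P_0$ on the left half, the reflected statement on the right half, the excluded half-point in the middle), combined with the path--continued-fraction dictionary of Theorem~\ref{main1}, is exactly that assembly made explicit. The one point you leave implicit --- as does the paper --- is the case where $p$ divides $\lfloor p^lx\rfloor$ or $\lfloor p^lx\rfloor+1$, so that only one of the two bracketing denominator-$p^l$ vertices lies in $\mX_{p^l}$ and the trichotomy must be rephrased, but there the conclusion is only easier since a single starting vertex is available.
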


%\noindent From the proof of Proposition \ref{prop_maxdchngP0}, we make the following observations.\vspace{3mm}
 
%\noindent Let  $x\in\mX_{p^l}\cap[a/p^l,(a+1)/p^l]$ for some integer $a.$  Suppose the path from $\infty$ to $x$ with maximum direction changing edges is given by
% 	$$\infty\to P_0\to P_1\to\cdots\to P_n=x.$$
% 	\begin{enumerate}
% 		\item $P_0$ is either $a/p^l$ or $(a+1)/p^l.$ When $a/p^l$ and $(a+1)/p^l$ are both in $\mX_{p^l},$ then   $$P_0=a/p^l \Leftrightarrow |x-\frac{a}{p^l}|\le|x-\frac{a+1}{p^l}|.$$
% 		\item At the $i$-th stage for $i\ge1$, suppose there are two vertices adjacent to $P_{i-1}$ in $\f_{p^l}$, say $P$ and $P'$ such that there are well directed paths from $\infty$ to $x$ through  $P$ as well as through $P'$ and $P_{i-1}<P'<P.$ Then
% 		\begin{enumerate}
% 			\item  $P'<x<P.$
% 			\item $P\sim_{p^l}P'.$
% 			\item 	$P_i=P.$
% 		\end{enumerate}
% 		
% 	\end{enumerate}
% 	 The above discussion suggests the following algorithm to find a well directed path from $\infty$ to $x\in\mX_{p^l}$ with maximum direction changing edges.
 \begin{theorem}\label{pathwithmaximumflips}
 	Suppose $x\in\R.$ The well directed path $\infty\to P_0\to P_1\to\cdots\to P_n\to\cdots\to x$   in $\f_{p^l}$ with maximum direction changing edges can be obtained by the following steps:
 	\begin{enumerate}
 		\item $P_0\sim_{p^l}\infty$ is chosen so that $|P_0-x|$ is the least possible.

 		\item For $i\ge1,$ 
 		if $P, P'\in\mX_{p^l}$ are such that  $P_{i-1}\sim_{p^l} P,$ $P_{i-1}\sim_{p^l} P'$,  $P\sim_{p^l} P'$ and $P_{i-1}<P'<x<P$. Then $P_i=P.$ 
 	\end{enumerate}
 \end{theorem}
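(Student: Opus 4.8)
The plan is to show that the path produced by the two-step construction is precisely the unique well directed path with maximum direction changing edges whose existence and uniqueness are guaranteed by Theorem \ref{uniquemaximumflips}, and to do this by induction on the index of the vertices, matching the constructed vertex $P_i$ with the corresponding vertex of that canonical path at each stage. Throughout I write $P_i=p_i/q_i$ and use repeatedly that the neighbours of a fixed vertex $V$ in $\f_{p^l}$, read along the real line, form two sequences accumulating only at $V$, that two consecutive neighbours which are moreover adjacent to one another bound a triangle with $V$, and that no two edges cross (Proposition \ref{nocrossing}). For definiteness I treat the case $x>P_{i-1}$; the case $x<P_{i-1}$ is symmetric.

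For the base case I must show that the neighbour $P_0$ of $\infty$ minimising $|P_0-x|$ is the first vertex of the canonical path. If $a=\lfloor p^lx\rfloor$ and both $a/p^l$ and $(a+1)/p^l$ lie in $\mX_{p^l}$, this is exactly Proposition \ref{prop_maxdchngP0}: the maximum direction changing path is via $a/p^l$ iff $a/p^l<x<(a/p^l)\oplus((a+1)/p^l)$, i.e. iff $x$ is nearer to $a/p^l$, and via $(a+1)/p^l$ in the mirror case. When one of these half points is absent from $\mX_{p^l}$, I would replace $(a+1)/p^l$ by the genuine neighbour $Q_0$ of $\infty$ immediately to the right of $x$ and use that $P_0\oplus Q_0$ is still the midpoint of the two neighbours, so that ``$x$ nearer to $P_0$'' is again equivalent to $P_0<x<P_0\oplus Q_0$; existence of a well directed path via the nearer neighbour then follows as before, and non-crossing of edges rules out starting on the far side. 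Thus $P_0$ is forced.

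For the inductive step assume $P_0,\dots,P_{i-1}$ already agree with the canonical path. The decisive observation is that the local picture at $P_{i-1}$ — a vertex together with the fan of its neighbours, their pairwise Farey sums, and the constraint from Lemma \ref{k-thfareysum} that $(\oplus_k P_{i-1})\oplus R\notin\mX_{p^l}$ for some $k<p$ — is the same picture used at $\infty$ in Lemma \ref{lemma_uniquemaximumflips} and Proposition \ref{prop_maxdchngP0}. Re-running those arguments with $P_{i-1}$ in place of $\infty$ shows that a continuation maximising the number of direction changing edges must overshoot $x$ at the first opportunity: among the neighbours of $P_{i-1}$ it selects the nearest one $P$ lying beyond $x$ for which $P_{i-1},P',P$ is a triangle straddling $x$ (so $P_{i-1}<P'<x<P$), and sets $P_i=P$. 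Here Lemma \ref{twoways} guarantees that no vertex of smaller denominator is skipped between $P_{i-1}$ and $P$, Remark \ref{remark_maximumdchange} identifies $P$ as the relevant Farey sum and forces any path reaching $x$ on the $P'$-side to pass through $P$, and Lemma \ref{k-thfareysum} bounds the retaining build-up preceding the overshoot. Since $P$ is a Farey sum of $P_{i-1}$ with one of its neighbours it has strictly larger denominator, and it lies in $\mX_{p^l}$ by construction, so $P_i=P$ is a legitimate next vertex; any other neighbour either fails to be well directed or yields strictly fewer direction changing edges, by the same extremal comparison as in the proof of Proposition \ref{prop_maxdchngP0}. This is exactly the rule in step (2), and by uniqueness the constructed vertex equals the canonical one.

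The main obstacle is precisely this inductive step in the neighbourhood of a ``hole'', i.e. a rational $a/p^l$ with $p\mid a$ that is missing from $\mX_{p^l}$. Near such a point the two consecutive neighbours of $P_{i-1}$ straddling $x$ need not be adjacent to one another, so no straddling triangle is available and the overshoot rule of step (2) cannot fire; the canonical path is then forced to take several direction retaining edges, climbing the fan of genuine $\mX_{p^l}$-vertices that accumulates at the hole, until the straddling pair finally becomes an adjacent pair and step (2) applies. Making this rigorous requires showing that this retaining build-up is finite and terminates exactly when a straddling triangle first appears — which is governed by Lemma \ref{k-thfareysum} — and that throughout it the denominators increase and no edge is crossed, so that the construction still reproduces the unique well directed path with maximum direction changing edges. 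The remaining care concerns the degenerate inputs already excluded in Theorem \ref{uniquemaximumflips} (the half point $x=\lfloor p^lx\rfloor/p^l\oplus(\lfloor p^lx\rfloor+1)/p^l$, and $x\in\Q\setminus\mX_{p^l}$), where uniqueness itself fails and the statement must be read accordingly.
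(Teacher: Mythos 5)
The paper itself supplies no proof of Theorem~\ref{pathwithmaximumflips}: it is stated bare and immediately put to use in Corollary~\ref{algoformaximumflips}. So there is no argument of the authors to measure yours against, and your overall strategy --- induct along the path, invoke the uniqueness guaranteed by Theorem~\ref{uniquemaximumflips}, and redo the fan analysis of Lemma~\ref{lemma_uniquemaximumflips} and Proposition~\ref{prop_maxdchngP0} at each vertex --- is the natural reconstruction of what the authors presumably have in mind.

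Your proposal is nevertheless incomplete, and at exactly the point you flag as ``the main obstacle''; that obstacle is the entire content of the inductive step, and it is not hypothetical. Take the paper's own example $x=1/\pi$ in $\f_5$. Step (1) gives $P_0=2/5$. The neighbours of $2/5$ are the points $(2m\pm1)/(5m)$, and the two consecutive ones lying in $\mX_5$ that straddle $x\approx 0.3183$ are $3/10$ and $7/20$; the vertex $5/15$ between them is excluded, and $3\cdot20-10\cdot7=-10\ne\pm5$, so $3/10\not\sim_5 7/20$ and the hypothesis of step (2) is satisfied by no pair $(P,P')$ at $i=1$. Yet the path the paper computes continues $2/5\to3/10$. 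So the rule you are trying to justify does not even determine $P_1$ here, and your proposed repair --- several direction retaining edges ``climbing the fan'' at the hole until a straddling triangle appears --- is also not what happens: the path jumps in a single step to $3/10$, the neighbour of $P_0$ nearest to $x$ on the far side of $x$, overshooting both $x$ and the hole. What actually has to be proved (and what would simultaneously repair the statement of the theorem) is that $P_i$ is the neighbour of $P_{i-1}$ closest to $x$ among those separated from $P_{i-1}$ by $x$, whether or not it forms a triangle with the last neighbour on the near side; the triangle formulation of step (2) only covers the generic case. That comparison of the two candidate continuations across a hole --- using Lemma~\ref{twoways}, Lemma~\ref{k-thfareysum} and Proposition~\ref{nocrossing} to count direction changing edges, in the manner of the proof of Proposition~\ref{prop_maxdchngP0} --- is absent from your argument, so the induction does not close.
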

 
 Now we utilize the algorithm given in Theorem \ref{pathwithmaximumflips} for finding an $\f_{p^l}$-continued fraction of $x\in\R$ with maximum $+1$.
 \begin{corollary} \label{algoformaximumflips} Given any $x\in\R$, an $\f_{p^l}$-continued fraction expansion $$\frac{1}{0+}~\frac{p^l}{b+}~\frac{\epsilon_{1} }{a_{1}+}~\frac{\epsilon_{2}}{a_{2}+}~\cdots\frac{\epsilon_{n}}{a_{n}}\cdots$$ of $x$ with maximum $+1$ is obtained as follows:
 	$$b=\left\{
 	\begin{array}{ll}
 	\lfloor p^l x\rfloor, &\mbox{if } 							
 	
 	(\lfloor p^l x\rfloor+1,p)\neq1\\\\
 	\lfloor p^l x\rfloor+1, &\mbox{if } 							
 	(\lfloor p^l x\rfloor,p)\neq1\\\\
 	
 	\lfloor p^l x\rfloor,& \mbox{ if } (\lfloor p^l x\rfloor,p)=1=(\lfloor p^l x\rfloor+1,p)  \textnormal{ and }  x<	\frac{\lfloor p^l x\rfloor}{p^l}\oplus\frac{\lfloor p^l x\rfloor+1}{p^l}\\\\

 	\lfloor p^l x\rfloor+1, &  \mbox{ if } (\lfloor p^l x\rfloor,p)=1=(\lfloor p^l x\rfloor+1,p) \textnormal{ and } 	x>	\frac{\lfloor p^l x\rfloor}{p^l}\oplus\frac{\lfloor p^l x\rfloor+1}{p^l}.
 	\end{array}
 	\right.$$	
 	
  Set $y_1=p^lx-b$, 
 	%\begin{eqnarray}
 	%\textnormal{ where }
 	\begin{enumerate}
 		\item   $\ep_{i}=\sign (y_i);$\\
 		\item \begin{enumerate}
 			\item Suppose $1/|y_i|\in\N$ then 
 			$$a_i=\left\{ \begin{array}{lr}
 			\frac{1}{|y_i|}-1, & 	\textnormal{if } \frac{1}{|y_i|}-1\not\equiv -\ep_i p_{i-2}p_{i-1}^{-1}\mod p\\\\
 			1/|y_i|, & \textnormal{otherwise}.
 			\end{array}\right.$$

 			\item Suppose $1/|y_i|\not\in\N$ then 
 			$$a_i=\left\{ \begin{array}{lr}
 			\lfloor (\frac{1}{|y_i|})\rfloor, & 	\textnormal{if } \lfloor (\frac{1}{|y_i|})\rfloor\not\equiv -\ep_i p_{i-2}p_{i-1}^{-1}\mod p\\\\
 			\lfloor (\frac{1}{|y_i|}+1)\rfloor, & \textnormal{otherwise}.
 			\end{array}\right.$$
 			
 		\end{enumerate}
 		
 		\item    $y_{i+1}=\frac{1}{|y_i|}-a_i$. 
 	\end{enumerate}
 	\end{corollary}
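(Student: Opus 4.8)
The plan is to show that the arithmetic recursion in the statement reproduces, step by step, the geometric construction of the well directed path with maximum direction changing edges from Theorem \ref{pathwithmaximumflips}, and then to invoke the uniqueness in Theorem \ref{uniquemaximumflips} to conclude. Throughout I would use the dictionary between a path $\infty=P_{-1}\to P_0\to\cdots$ and its continued fraction data: $P_i=p_i/q_i$ with $p_i=a_ip_{i-1}+\ep_ip_{i-2}$, $q_i=a_iq_{i-1}+\ep_iq_{i-2}$, together with the fin identities $\ep_i=\sign(y_i)$ and $x_i:=1/|y_i|=a_i+y_{i+1}$. The last identity comes from $y_i=\ep_i/(a_i+y_{i+1})$ and $|y_{i+1}|\le1$ (Theorem \ref{distinctconvergents}(4)); it is exactly step (3) of the statement, $y_{i+1}=1/|y_i|-a_i$, and it shows $\ep_{i+1}=\sign(y_{i+1})$ equals $+1$ precisely when $a_i<x_i$.

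For the base case I would determine $P_0=b/p^l$. Writing $a=\lfloor p^lx\rfloor$, the only neighbours of $\infty$ bracketing $x$ are $a/p^l$ and $(a+1)/p^l$, and such a fraction lies in $\mathcal{X}_{p^l}$ only when its numerator is prime to $p$. Hence if $p\mid a+1$ we are forced to take $b=a$, and if $p\mid a$ we are forced to take $b=a+1$; this gives the first two cases. When both numerators are prime to $p$, both vertices are available, and Proposition \ref{prop_maxdchngP0} says the maximum-direction-changing path passes through $P_0$ iff $P_0<x<P_0\oplus Q_0$. Since the Farey sum $\frac{a}{p^l}\oplus\frac{a+1}{p^l}=\frac{2a+1}{2p^l}$ is the ordinary midpoint of the two candidates, the condition $P_0<x<P_0\oplus Q_0$ is exactly $x<\frac{a}{p^l}\oplus\frac{a+1}{p^l}$, yielding the remaining two cases and matching the displayed formula for $b$; then $y_1=p^lx-b$ is the first fin.

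For the inductive step, assume $P_{i-2},P_{i-1}$ (equivalently $p_{i-2},p_{i-1},\ep_i$) have been produced correctly. Step (2) of Theorem \ref{pathwithmaximumflips} selects $P_i$ as the neighbour $P$ of $P_{i-1}$ with $P_{i-1}<P'<x<P$ (or the mirror inequality); in continued fraction terms the candidate numerators are $mp_{i-1}+\ep_ip_{i-2}$, and the requirement that such a numerator be prime to $p$ is precisely condition (3) of the definition, i.e. $m\not\equiv-\ep_ip_{i-2}p_{i-1}^{-1}\pmod p$ (note $p\nmid p_{i-1}$ since $P_{i-1}\in\mathcal{X}_{p^l}$). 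The maximum $+1$ rule wants the step that forces $\ep_{i+1}=+1$, namely $a_i=\lfloor x_i\rfloor$ with $y_{i+1}=x_i-a_i\in(0,1)$; when $x_i\in\mathbb{N}$ this becomes $a_i=x_i-1$ with $y_{i+1}=1$. If this choice meets the coprimality condition we keep it, otherwise we pass to $a_i+1$. The key arithmetic observation is that a single increment always restores admissibility: two consecutive candidate numerators differ by $p_{i-1}\not\equiv0\pmod p$, so $m$ and $m+1$ are distinct modulo $p$ and at most one of them vanishes. This is exactly the fork in cases (2a) and (2b); it guarantees $\gcd(p_i,q_i)=1$, while the inequalities $a_i+\ep_i\ge1$ and $a_i+\ep_{i+1}\ge1$ follow from $x_i\ge1$ and the sign bookkeeping, so the output is a genuine $\mathcal{F}_{p^l}$-continued fraction.

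The main obstacle is justifying that replacing the forbidden $a_i=\lfloor x_i\rfloor$ by $\lfloor x_i\rfloor+1$, which flips $\ep_{i+1}$ to $-1$ and locally loses a $+1$, nevertheless yields the globally maximum number of $+1$'s. I would not argue extremality directly but observe that the substitution is \emph{forced}, not greedy: the alternative direction-changing vertex is simply absent from $\mathcal{X}_{p^l}$, so the algorithm has no admissible choice but the increment, and Lemma \ref{k-thfareysum} ensures such forced detours terminate after fewer than $p$ Farey sums. Combined with the uniqueness in Lemma \ref{lemma_uniquemaximumflips} and Theorem \ref{uniquemaximumflips}, this shows the recursion traces the unique maximum-direction-changing path, hence produces its continued fraction. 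A secondary point needing care is the terminal bookkeeping of case (2a): when $x_i=1$ the default $a_i=0$ is inadmissible and the expansion must stop (equivalently, we do not append an artificial trailing $1/1$), and I would treat this as the boundary of the induction.
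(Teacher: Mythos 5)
Your overall strategy --- translate the geometric greedy construction of Theorem \ref{pathwithmaximumflips} into the arithmetic recursion and then invoke the uniqueness statements (Lemma \ref{lemma_uniquemaximumflips}, Theorem \ref{uniquemaximumflips}) --- is exactly what the paper intends; the paper in fact offers no argument beyond the sentence preceding the corollary, so your write-up supplies the missing details rather than diverging from them. The two load-bearing points you identify are the right ones: the base case is Proposition \ref{prop_maxdchngP0} together with the observation that the Farey sum $\frac{a}{p^l}\oplus\frac{a+1}{p^l}$ is the midpoint, and the inductive step rests on the fact that consecutive candidate numerators $mp_{i-1}+\ep_ip_{i-2}$ and $(m+1)p_{i-1}+\ep_ip_{i-2}$ differ by $p_{i-1}\not\equiv0\pmod p$, so at most one of two consecutive choices of $a_i$ can fail the coprimality test. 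That dichotomy is the real content of the two-branch formulas in (2a)/(2b), and you state it correctly.

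Two of your assertions, however, are not actually established and are exactly where the difficulty lives. First, you claim that $a_i+\ep_i\ge1$ ``follows from $x_i\ge1$ and the sign bookkeeping.'' It does not: if the forced increment at step $i-1$ produces $\ep_i=-1$ with $|y_i|>1/2$, then $x_i=1/|y_i|\in(1,2)$, and the formula in (2b) outputs $a_i=\lfloor x_i\rfloor=1$ whenever that value passes the coprimality test, giving $a_i+\ep_i=0$ and a decreasing denominator, contradicting Theorem \ref{distinctconvergents}(2). You must either show this configuration cannot occur along the maximal path or explain how the recursion is to be read so that $a_i\ge2$ is enforced when $\ep_i=-1$; as written your induction would certify an output that is not an $\f_{p^l}$-continued fraction. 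Second, your resolution of the terminal case --- ``when $x_i=1$ the expansion must stop'' --- cannot be right as stated, because when $y_i=1$ the current convergent $P_{i-1}$ is not yet $x$ (indeed $x=P_{i-1}\oplus\ep_iP_{i-2}$); the correct repair is to absorb the would-be trailing $\frac{1}{1}$ into the preceding partial quotient, i.e.\ to terminate one step earlier with $a_{i-1}$ increased by one, which is what the definition's exclusion of expansions ending in $\frac{1}{1}$ and the paper's worked expansion of $11/40$ (where the maximal expansion ends in $\frac{1}{2}$, not $\frac{1}{1+}\,\frac{1}{1}$) both demand. Until these two points are settled, the induction does not close; the rest of the argument, including the appeal to Lemma \ref{k-thfareysum} for the termination of forced detours and to Theorem \ref{uniquemaximumflips} for global maximality, is sound.
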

 	\subsection{Non-uniqueness of $\f_{p^l}$-Continued fraction of $x\in\Q\setminus\mX_{p^l}$}
 In this subsection, we show that an $\f_{p^l}$-continued fraction expansion with maximum $+1$ of an element of $\Q\setminus\mX_{p^l}$  is not unique. In fact,  there are exactly two such expansions.
 	%	\begin{example}\label{maximumflips} Recall Example \ref{manyexpansions} for all  $\f_5$-continued fraction expansions of $11/40$. We see that $$\frac{1}{0+}~\frac{5}{1+}~\frac{1}{2+}~\frac{1}{1+}~\frac{1}{1+}~\frac{1}{1}$$
 	%	is the $\f_5$-continued fraction expansion of $11/40$ with maximum $+1.$
 %	\end{example}
 	
  Observe that 
  	$x\in\Z$ if and only if $x=\frac{p^l\lfloor{x}\rfloor+1}{p^l}\oplus \frac{p^l\lfloor{x}\rfloor-1}{p^l}.$ For $0\le k\le l-1,$ set $$\frac{1}{p^k}\dot{\Z}=\{a/p^{k}: a\in\Z \textnormal{ and } \mathrm{gcd}(a,p)=1\}.$$ 
  	For $0\le k\le l-1,$
  		$x=a/p^k\in\frac{1}{p^k}\dot{\Z}$ if and only if $x=p^{l-k}a/p^l=\frac{p^{l-k}a+1}{p^l}\oplus \frac{p^{l-k}a-1}{p^l}.$	
  		
  	We adopt $\frac{1}{p^k}\dot{\Z}$ as a notation for the set $\{a/p^k:a\in\Z \textnormal{ and } \mathrm{gcd}(a,p)=1\}$ to differentiate it from $\frac{1}{p^k}{\Z}=\{a/p^k:a\in\Z\}.$
  	Here we record a lemma to generalize the above observations for each rational number which is not in $\mX_{p^l}.$
  \begin{lemma}\label{fareytwoways}
  	Let $a/b$ and $c/d$ be adjacent vertices in the Farey graph with $0<d<b.$ Suppose a reduced rational $x/y$ is adjacent to $a/b$ with $0<y<b$ then either $x/y=c/d$ or $x/y=(a-c)/(b-d).$
  \end{lemma}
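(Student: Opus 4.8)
The plan is to prove Lemma~\ref{fareytwoways} by exploiting the rigidity of adjacency in the Farey graph, where two reduced fractions $r/s$ and $t/u$ are adjacent precisely when $|ru-st|=1$. First I would record the hypotheses as unimodular relations: since $a/b\sim c/d$ we have $ad-bc=\pm1$, and since $x/y\sim a/b$ we have $xb-ay=\pm1$. The key observation is that any fraction adjacent to $a/b$ must be a Farey neighbour, and the set of Farey neighbours of $a/b$ is known to be exactly $\{(na+c)/(nb+d): n\in\Z\}$ for any one fixed neighbour $c/d$ — equivalently, the neighbours are $c/d$ together with its repeated Farey sums and differences with $a/b$. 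So I would parametrise $x/y = (c+na)/(d+nb)$ for some integer $n$, using that $c/d$ is a given neighbour.

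Next I would impose the denominator constraint $0<y<b$. Writing $y=d+nb$ with $0<d<b$, I examine which integers $n$ keep $y$ strictly between $0$ and $b$. The choice $n=0$ gives $y=d$, hence $x/y=c/d$. The choice $n=-1$ gives $y=d-b$, which is negative; but a reduced fraction is taken with positive denominator, so I would instead write this neighbour as $(a-c)/(b-d)$, whose denominator $b-d$ satisfies $0<b-d<b$ because $0<d<b$. For any other value of $n$ (i.e.\ $|n|\ge 1$ with $n\ne -1$), the denominator $|d+nb|$ is at least $b$, violating $0<y<b$. This case analysis pins down that the only two neighbours of $a/b$ with denominator strictly less than $b$ are $c/d$ and $(a-c)/(b-d)$, giving the conclusion.

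The main obstacle I anticipate is justifying cleanly that \emph{every} Farey neighbour of $a/b$ has the form $(c+na)/(d+nb)$, and that these representatives are already in lowest terms. For the first point I would argue directly from the unimodular equations: if $xb-ay=\e$ with $\e=\pm1$ and $cb-ad=\e'$ with $\e'=\pm1$, then a short linear-algebra computation over $\Z$ (comparing the vectors $(x,y)$ and $(c,d)$ against the basis given by $(a,b)$ and its unimodular partner) forces $(x,y)=\pm(c,d)+n(a,b)$; the sign is fixed once we require $y>0$ and match the value of $\e$. For the reducedness, the identity $x b - a y = \pm1$ itself certifies $\gcd(x,y)=1$, since any common divisor of $x$ and $y$ would divide $\pm1$. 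I would also need to handle the bookkeeping that $(a-c)/(b-d)$ really is the $n=-1$ neighbour rewritten with positive denominator, which is immediate from $-(c-a)=a-c$ and $-(d-b)=b-d$.

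Once these structural facts are in place the proof is essentially a finite check on the integer $n$, so I expect the argument to be short. An alternative, more geometric route would invoke Proposition~\ref{nocrossing} (no two edges cross) together with the embedding of $\f_N$ in the Farey graph: the edge $a/b\sim c/d$ separates the interval, and any neighbour of $a/b$ with smaller denominator must lie in one of the two adjacent Farey cells, of which there are exactly two candidates. I would, however, favour the unimodular-equation approach since it is self-contained and avoids appealing to the planar picture, making the denominator bound $0<y<b$ transparent.
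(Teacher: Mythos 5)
Your proof is correct. Note that the paper actually states Lemma~\ref{fareytwoways} without any proof, treating it as a classical fact about Farey neighbours, so there is no argument in the source to compare against; your unimodular-equation derivation is exactly the standard way to establish it. The two points you flag as needing care are handled properly: from $xb-ay=\pm1$ and $cb-ad=\pm1$ one gets $(x,y)=\pm(c,d)+n(a,b)$ because $\gcd(a,b)=1$ forces any integer solution of $Xb-Ya=0$ to be a multiple of $(a,b)$, and the sign normalisation together with $0<d<b$ leaves only $n=0$ (giving $c/d$) and the $n=-1$ neighbour rewritten with positive denominator as $(a-c)/(b-d)$, all other choices yielding denominator at least $b$. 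This is a complete and self-contained proof of the omitted lemma.
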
	
  \begin{lemma}\label{uniquefareysum} 
  	Suppose $x=r/(p^ks)\in\Q\setminus\mX_{p^l}$ with $p\nmid s$, $\mathrm{gcd}(r,p^ks)=1$ and $0\le k<l$. Then there exists a unique pair of vertices $R_1,R_2\in\mX_{p^l}$ such that $R_1\sim_{p^l} R_2$, and $r/(p^ks)=R_1\oplus R_2$, where  $R_i=r_i/(p^ls_i)$ with $\mathrm{gcd}(r_i,p^ls_i)=1$ for $i=1,2$ and  $0<s_1\le s_2$.
  	%$3p=r_1+ r_2$ and  $3q=3s_1+ 3s_2$, where $R_i=r_i/3s_i, i=1,2.$
  \end{lemma}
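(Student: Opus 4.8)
The plan is to push the whole problem down into the ordinary Farey graph by dividing denominators by $p^l$, reducing the assertion to the classical fact that every reduced fraction has a unique pair of Farey parents (mediant ancestors). First I would record what the hypotheses force on any admissible pair. If $R_1=r_1/(p^ls_1)$ and $R_2=r_2/(p^ls_2)$ are adjacent in $\f_{p^l}$, then the adjacency relation $r_2(p^ls_1)-r_1(p^ls_2)=\pm p^l$ simplifies to $r_2s_1-r_1s_2=\pm1$; that is, the reduced pairs $(r_1,s_1)$ and $(r_2,s_2)$ are adjacent in the ordinary Farey graph. From $(r_1+r_2)s_1-r_1(s_1+s_2)=r_2s_1-r_1s_2=\pm1$ I get $\gcd(r_1+r_2,\,s_1+s_2)=1$, so the only cancellation occurring in $R_1\oplus R_2=(r_1+r_2)/\big(p^l(s_1+s_2)\big)$ comes from powers of $p$.

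Next I would compare this with $x=r/(p^ks)$. Writing $j$ for the exact power of $p$ dividing $r_1+r_2$, the fact that $x\notin\mX_{p^l}$ (so $p^l$ does not divide its denominator) forces $j\ge1$, hence $p\nmid s_1+s_2$; matching the reduced denominator $p^{\,l-j}(s_1+s_2)$ against $p^ks$ with $p\nmid s$ and $0\le k<l$ then pins down $s_1+s_2=s$, $\min(j,l)=l-k$, and consequently $r_1+r_2=p^{\,l-k}r$. In particular $p\mid r_1+r_2$. (The degenerate case $s=1$ admits no adjacent pair at all, since $s_1+s_2\ge2$; this is precisely the $\tfrac1{p^k}\dot\Z$/integer situation already settled by the explicit observations preceding the lemma.) Now set $M=p^{\,l-k}r/s$. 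Because $p\nmid s$ and $\gcd(r,s)=1$, this fraction is reduced with denominator $s\ge2$, and the relations above say exactly that $(r_1/s_1,\,r_2/s_2)$ must be a pair of Farey neighbours whose mediant is $M$, i.e. the two Farey parents of $M$. Conversely, starting from the unique Farey parents $r_1/s_1<r_2/s_2$ of $M$ and putting $R_i=r_i/(p^ls_i)$, I would check directly that this pair works: adjacency holds since $p^l(r_2s_1-r_1s_2)=\pm p^l$; membership $R_i\in\mX_{p^l}$ holds because $\gcd(r_i,s_i)=1$ and $p\nmid r_i$ — the latter being the one genuinely arithmetic point, which follows since $p\mid r_1+r_2$ together with $p\mid r_1$ would give $p\mid r_2$ and hence $p\mid r_2s_1-r_1s_2=\pm1$, absurd; and finally $R_1\oplus R_2=(r_1+r_2)/\big(p^l(s_1+s_2)\big)=p^{\,l-k}r/(p^ls)=x$.

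Existence and uniqueness then both collapse to the existence and uniqueness of the Farey parents of the reduced fraction $M$, which is the classical mediant property and can be extracted from Lemma~\ref{fareytwoways}: the Farey neighbours of $M$ of denominator strictly smaller than that of $M$ are uniquely determined, and their denominators sum to the denominator of $M$, so labelling them by $0<s_1\le s_2$ yields the asserted unique pair $R_1,R_2$. The step I expect to be the main obstacle is the valuation bookkeeping in the second paragraph — establishing that adjacency together with $x\notin\mX_{p^l}$ forces exactly $s_1+s_2=s$ and $r_1+r_2=p^{\,l-k}r$, so that $M$ is the correct lift — since once that is in hand everything else is either a one-line computation or a direct appeal to Farey-parent uniqueness.
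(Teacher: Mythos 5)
Your proof is correct and follows essentially the same route as the paper: both reduce the problem to finding a Farey-adjacent pair $(r_1,s_1),(r_2,s_2)$ with $r_1+r_2=p^{l-k}r$ and $s_1+s_2=s$ (the paper produces one by a direct Bezout computation, while you invoke the Farey-parent property of the mediant $p^{l-k}r/s$, which is the same fact), and both deduce uniqueness from Lemma~\ref{fareytwoways} together with the embedding of $\f_{p^l}$ in the Farey graph. If anything your write-up is more careful than the printed proof, which omits the verification that $p\nmid r_i$ (needed for $R_i\in\mX_{p^l}$) and silently assumes $s\ge 2$, a degenerate case you explicitly flag.
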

  \begin{proof}
 It is enough to show that the following system has a solution
  	\begin{eqnarray*}
  	r_1+r_2&=& p^{l-k}r\\
  	s_1+s_2&=& s
  	\end{eqnarray*}
such that $	r_1s_2-r_2s_1=\pm1.$ Since $\mathrm{gcd}(p^{l-k}r,s)=1$, there exist integers
 $t,m,$ with $0<m<s$ so that $mp^{l-k} r-t s=\pm1.$ If $m\le s-m$, then set $r_1=t$ and $s_1=m$, else set $r_1=p^{l-k}r-t$ and $s_1=s-m.$
 The uniqueness follows from Lemma \ref{fareytwoways} and the fact that $\f_{p^l}$ is a subgraph of the Farey graph.
  \end{proof}
\begin{notation}
	For $x\in \Q\setminus \mX_{p^l}$, we will continue to use the notation $R_1, R_2$ with the properties stated in Lemma  \ref{uniquefareysum}. Let $N_x\in\N\cup\{0\}$ be such that the distance of $R_1$ from $\infty$ along the well directed path  with maximum direction changing edges is $N_x+1$.
\end{notation}

 By putting $s=2$ in Lemma \ref{uniquefareysum}, we get the following corollary:
  \begin{corollary}\label{coro_halfinteger}
  	Let $p>2$ be a prime and  $\frac{1}{2p^k}\dot{\Z}=\{a/(2p^{k}):a\in\Z \textnormal{ and }\mathrm{gcd}(a,2p)=1\}$ where $0\le k<l$.  Suppose $x\in\frac{1}{2p^k}\dot{\Z}$, then there exists a unique integer $t$ such that $x=t/p^l\oplus(t+1)/p^l.$
  \end{corollary}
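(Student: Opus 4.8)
The plan is to obtain this as a direct specialization of Lemma~\ref{uniquefareysum} with $s=2$. First I would check that the hypotheses apply: writing $x=a/(2p^k)$ with $\mathrm{gcd}(a,2p)=1$ and $0\le k<l$, I set $r=a$ and $s=2$. Since $p>2$ we have $p\nmid 2$, and $\mathrm{gcd}(a,2p)=1$ forces $\mathrm{gcd}(a,2p^k)=1$, so $x=r/(p^ks)$ is written exactly in the form required by the lemma. Moreover the reduced denominator of $x$ is $2p^k$, which is not divisible by $p^l$ because $k<l$ and $p\ne2$; hence $x\in\Q\setminus\mX_{p^l}$ and the lemma genuinely applies.

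Next I would extract the structural consequence of $s=2$. Lemma~\ref{uniquefareysum} produces a unique pair $R_1=r_1/(p^ls_1)$, $R_2=r_2/(p^ls_2)\in\mX_{p^l}$ with $R_1\sim_{p^l}R_2$, $x=R_1\oplus R_2$, $0<s_1\le s_2$ and $s_1+s_2=s=2$. The only positive integers satisfying $0<s_1\le s_2$ and $s_1+s_2=2$ are $s_1=s_2=1$, so both vertices have denominator exactly $p^l$. The determinant condition $r_1s_2-r_2s_1=\pm1$ of the lemma (equivalently, the adjacency $R_1\sim_{p^l}R_2$) then reads $r_1-r_2=\pm1$, so the two numerators are consecutive integers. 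Setting $t=\min\{r_1,r_2\}$ gives $\{R_1,R_2\}=\{t/p^l,(t+1)/p^l\}$ and therefore $x=t/p^l\oplus(t+1)/p^l$.

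For uniqueness of $t$, I would run the argument in reverse. If $x=t'/p^l\oplus(t'+1)/p^l$ for some integer $t'$, I first note that $t'/p^l$ and $(t'+1)/p^l$ are legitimate vertices of $\mX_{p^l}$: from $x=(2t'+1)/(2p^l)\in\frac{1}{2p^k}\dot{\Z}$ one obtains $2t'+1=ap^{l-k}$, and reducing modulo $p$ (using that $p$ is odd, so $2$ is invertible) shows $2t'\equiv-1$ and $2(t'+1)\equiv1\pmod p$, whence $p\nmid t'$ and $p\nmid t'+1$. Thus both fractions are in lowest terms with denominator divisible by $p^l$. Since these two vertices are adjacent in $\f_{p^l}$, have equal denominators (so $s_1=s_2=1$ satisfies $0<s_1\le s_2$), and Farey-sum to $x$, they form a pair meeting all the requirements of Lemma~\ref{uniquefareysum}; by the uniqueness asserted there, $\{t'/p^l,(t'+1)/p^l\}=\{t/p^l,(t+1)/p^l\}$, and hence $t'=t$.

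The computations are entirely routine once the substitution $s=2$ is made. The only point requiring a little care is the converse direction, where one must confirm that an arbitrary representation $x=t'/p^l\oplus(t'+1)/p^l$ really lands inside the framework of Lemma~\ref{uniquefareysum} (in particular that $p\nmid t'$ and $p\nmid t'+1$) before the uniqueness clause can be invoked; I expect this coprimality verification to be the main, though minor, obstacle.
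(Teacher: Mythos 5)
Your proposal is correct and follows exactly the paper's route: the paper derives this corollary precisely by putting $s=2$ in Lemma~\ref{uniquefareysum}, which is what you do. Your additional verifications (that $x\in\Q\setminus\mX_{p^l}$, that $s_1=s_2=1$ forces consecutive numerators, and the coprimality check $p\nmid t'$, $p\nmid t'+1$ needed to invoke the uniqueness clause in the converse direction) are sound fillings-in of details the paper leaves implicit.
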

  Now set \begin{equation}\label{Bp}\mathcal{B}_{p^l}=\left\{
  \begin{array}{ll} 
  \cup_{k=0}^{l-1}\frac{1}{2^k}\dot{\Z}, & \textnormal{ if } p=2\\\\
  
  \cup_{k=0}^{l-1}(\frac{1}{p^k}\dot{\Z}\cup\frac{1}{2p^k}\dot{\Z}), & \textnormal{ if } p\ne2.
  \end{array}\right.
   \end{equation}
  
 \begin{corollary}\label{dist_B_p}
If $x\in\mathcal{B}_{p^l},$ then $N_x=0.$	
 \end{corollary}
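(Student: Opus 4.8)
The plan is to reduce everything to a statement about the denominator of $R_1$. First I would record two structural facts. The neighbours of $\infty=1/0$ in $\f_{p^l}$ are exactly the vertices $a/p^l$ with $\mathrm{gcd}(a,p)=1$, since adjacency of $a/p^l$ to $1/0$ reads $|a\cdot0-p^l\cdot1|=p^l$, which is automatic; and along any well directed path the denominators are strictly increasing (Theorem~\ref{distinctconvergents}). As $p^l$ is the smallest denominator occurring in $\mX_{p^l}$, these two facts together say that the only vertex of denominator $p^l$ on the well directed path with maximum direction changing edges is the first one, $P_0$, which sits at distance $1$ from $\infty$. Hence, writing $R_1=r_1/(p^ls_1)$, the condition $s_1=1$ is equivalent to $R_1=P_0$, and since the distance of $R_1$ from $\infty$ equals $N_x+1$ by definition, this is in turn equivalent to $N_x=0$. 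So the whole corollary reduces to checking that every $x\in\mathcal{B}_{p^l}$ has $s_1=1$, i.e. is a Farey sum of two neighbours of $\infty$.

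Next I would dispose of the easy half. A reduced $x\in\mathcal{B}_{p^l}$, written as $r/(p^ks)$ with $p\nmid s$ and $0\le k<l$, has $s\in\{1,2\}$, and $s=2$ can occur only for odd $p$. When $s=2$, i.e. $x\in\frac{1}{2p^k}\dot{\Z}$, Corollary~\ref{coro_halfinteger} produces an integer $t$ with $x=\frac{t}{p^l}\oplus\frac{t+1}{p^l}$; the two summands are reduced of denominator $p^l$ and satisfy $\frac{t}{p^l}\sim_{p^l}\frac{t+1}{p^l}$, so by the uniqueness in Lemma~\ref{uniquefareysum} they are exactly $R_1,R_2$ with $s_1=s_2=1$. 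Thus $R_1=P_0$ and $N_x=0$ in this case.

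The remaining, and genuinely delicate, case is $s=1$, i.e. $x=a/p^k\in\frac{1}{p^k}\dot{\Z}$ (this is the only possibility when $p=2$, and it includes the true integers $k=0$). Here I would invoke the elementary identity recorded just before Lemma~\ref{fareytwoways}, $x=\frac{p^{l-k}a+1}{p^l}\oplus\frac{p^{l-k}a-1}{p^l}$; because $l-k\ge1$ we have $p^{l-k}a\pm1\equiv\pm1\not\equiv0\pmod p$, so both fractions are reduced of denominator exactly $p^l$, hence are neighbours of $\infty$, and they are the two such neighbours nearest to and straddling $x$. The hard part is that these two vertices are \emph{not} adjacent in $\f_{p^l}$ (their determinant is $2p^l$), and in fact no integer is a Farey sum of adjacent vertices of $\mX_{p^l}$: solving $r_1s_2-r_2s_1=\pm1$ together with $r_1+r_2=m\,p^l(s_1+s_2)$ forces $(s_1+s_2)\mid 1$, which is impossible. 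Consequently Lemma~\ref{uniquefareysum} does not literally apply when $s=1$, and the careful point is to argue that the $R_1$ appearing in the definition of $N_x$ is still a denominator-$p^l$ vertex on the path. This I would secure from the algorithm of Theorem~\ref{pathwithmaximumflips}, which selects $P_0$ among the neighbours of $\infty$ nearest to $x$; since those are precisely the two displayed vertices, $P_0$ has denominator $p^l$ and $R_1=P_0$. With this identification $N_x=0$, completing the argument.
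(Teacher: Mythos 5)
The paper offers no proof of this corollary at all: it is presented as immediate from the two observations displayed just before Lemma~\ref{fareytwoways} (that $x=a/p^k\in\frac{1}{p^k}\dot{\Z}$ satisfies $x=\frac{p^{l-k}a+1}{p^l}\oplus\frac{p^{l-k}a-1}{p^l}$) together with Corollary~\ref{coro_halfinteger}. Your write-up therefore does strictly more work than the source, but its overall route --- show that the relevant $R_1$ is a neighbour of $\infty$, hence at distance $1$, hence $N_x=0$ --- is exactly the intended one, and your $s=2$ case via Corollary~\ref{coro_halfinteger} and the uniqueness in Lemma~\ref{uniquefareysum} is precisely what the paper relies on. The genuine added value is your analysis of the $s=1$ case: the two summands $\frac{p^{l-k}a\pm1}{p^l}$ have determinant $2p^l$ and so are not adjacent in $\f_{p^l}$, and indeed no element of $\frac{1}{p^k}\dot{\Z}$ is a Farey sum of an adjacent pair, so Lemma~\ref{uniquefareysum} (and with it the literal definition of $R_1$, hence of $N_x$) silently fails there; your repair via step (1) of Theorem~\ref{pathwithmaximumflips}, identifying $R_1$ with $P_0$, is consistent with Remark~\ref{longest cf1} and is what the corollary must mean for its statement to parse. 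Two small points, neither fatal. First, your displayed non-existence computation assumes $x=m\in\Z$ (you write $r_1+r_2=mp^l(s_1+s_2)$), whereas you need the claim for all $x=a/p^k$ with $0\le k<l$; the fix is one line, since $r_1s_2-r_2s_1=r_1(s_1+s_2)-s_1(r_1+r_2)$ combined with $r_1+r_2=ap^{l-k}(s_1+s_2)$ still forces $(s_1+s_2)\mid1$. Second, the equivalence ``$s_1=1$ if and only if $R_1=P_0$'' tacitly uses that the well directed path to $R_1$ is an initial segment of the path to $x$, which is Proposition~\ref{forNx} and should be cited.
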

  \begin{proposition}\label{forNx}
  For $0\le k\le l-1,$	suppose $x=r/(p^ks)\in\Q\setminus\mX_{p^l}$ and $x=R_1\oplus R_2$. Let 
  	$\infty\to P_0\to P_1\to\cdots \to P_{N_x-1}\to P_{N_x}=R_1$ be the  well directed path from $\infty$ to $R_1$ with maximum direction changing edges, 
   then the  well directed path from $\infty$ to $R_2$ with maximum direction changing edges is $$\infty\to P_0\to P_1\to\cdots \to P_{N_x-1}\to P_{N_x}=R_1\to R_2.$$
   Further, 	the path from $\infty$ to $R_2$ extends to a well directed path from $\infty$ to $x.$ 
  \end{proposition}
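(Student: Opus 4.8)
The plan is to derive both assertions at once from the structure of the (unique, by Theorem~\ref{uniquemaximumflips}) maximum-direction-changing well directed path to the real number $x$, by proving that $R_2$ is the immediate successor of $R_1$ on that path. First I would record the geometry. Since $x=R_1\oplus R_2$ is the Farey sum of two adjacent vertices, it lies strictly between $R_1$ and $R_2$, and $x\notin\mX_{p^l}$ means that this mediant is not itself a vertex of $\f_{p^l}$; moreover $R_1$ has the smaller denominator $p^ls_1\le p^ls_2$. Writing $C:=R_2\ominus R_1$, a short computation with $r_1s_2-r_2s_1=\pm1$ shows $\{R_1,R_2,C\}$ is a triangle of $\f_{p^l}$ with $R_2=R_1\oplus C$ and $C$ on the side of $R_2$ away from $x$.

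The heart of the argument is to show that $R_2$ is the nearest vertex of $\mX_{p^l}$ lying across $x$ that is adjacent to $R_1$. The neighbours of $R_1$ strictly between $R_1$ and $x$ are exactly the Farey sums $(\oplus_kR_1)\oplus R_2$ with $k\ge2$, all of which remain on the $R_1$-side of $x$; the first Farey sum $(\oplus_1R_1)\oplus R_2=x$ is not a vertex; and the next neighbour of $R_1$ beyond $x$ is $R_2$ itself. Consequently, when the greedy construction of Theorem~\ref{pathwithmaximumflips} reaches $R_1$, the rule ``cross $x$ to the nearest admissible neighbour'' selects $R_2$, so the edge $R_1\to R_2$ appears on the path to $x$; the competing length-two approach $R_1\to C\to R_2$ furnished by Lemma~\ref{twoways} is discarded because $C$ overshoots $x$ (it lies on the far side of $R_2$), and by Proposition~\ref{nocrossing} no edge of the path crosses $R_1R_2$. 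Here Lemma~\ref{k-thfareysum} guarantees that the Farey sums eventually leave $\mX_{p^l}$, pinning down the fan of neighbours of $R_1$ near $x$.

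With this in hand I would assemble the statement. Because $R_1$ and $R_2$ are the two straddling vertices of smallest denominator of the non-vertex point $x$, the path to $x$ passes through both, and since denominators increase along a well directed path the smaller-denominator vertex $R_1$ is visited first; by Lemma~\ref{fareytwoways} there is no vertex of smaller denominator interpolated, so $R_1$ is the immediate predecessor of $R_2$. The initial segment of the path to $x$ ending at $R_1$ is then, by uniqueness (Theorem~\ref{uniquemaximumflips}), the prescribed maximum-direction-changing path $\infty\to P_0\to\cdots\to P_{N_x}=R_1$; appending the edge $R_1\to R_2$ gives the path to $R_2$, and this is by construction an initial segment of the path to $x$, which is exactly the two assertions. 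The degenerate configurations $s_1=s_2$ (where $C=\infty$ and both $R_1,R_2$ are adjacent to $\infty$) and $x\in\mathcal B_{p^l}$ (where $N_x=0$ by Corollary~\ref{dist_B_p}) I would treat separately, using Corollary~\ref{coro_halfinteger} and Proposition~\ref{nonuniquehalfpoint}.

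The step I expect to be the main obstacle is the one in the second paragraph: proving that on the path to $x$ the vertex immediately preceding $R_2$ is $R_1$ and not the Farey difference $C=R_2\ominus R_1$. The subtlety is that $R_1$ need not have smaller denominator than $C$ (this depends on whether $2s_1\lessgtr s_2$), so the length-two detour $R_1\to C\to R_2$ is a genuine well directed approach to the isolated vertex $R_2$ and even carries an extra direction-changing edge; it must be excluded specifically because it cannot be continued toward $x$ while remaining well directed, since the forced edge out of $R_2$ would then fail to be direction changing. Making this exclusion precise --- separating ``maximum-direction-changing path to the vertex $R_2$'' from ``the segment of the path to $x$ ending at $R_2$'' --- is where the hypotheses $x\notin\mX_{p^l}$, $s_1\le s_2$ and the mediant identity $x=R_1\oplus R_2$ are used in full, together with Remark~\ref{remark_maximumdchange} and the local reasoning of Proposition~\ref{prop_maxdchngP0}.
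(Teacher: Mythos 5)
Your overall strategy rests on a premise that fails in precisely the case the proposition addresses: you appeal to Theorem \ref{uniquemaximumflips} to speak of ``the (unique) maximum-direction-changing well directed path to the real number $x$'' and propose to read both assertions off that path. But Theorem \ref{uniquemaximumflips} explicitly excludes $x\in\Q\setminus\mX_{p^l}$, and for such $x$ uniqueness genuinely fails: the point of this subsection is that there are exactly two maximum-$+1$ expansions, and only one of them passes through $R_2$ at all --- the other leaves $R_1$ toward $(\oplus_2R_1)\oplus R_2$ on the near side of $x$ (compare \eqref{rational1} and \eqref{rational2}). So ``$R_2$ is the immediate successor of $R_1$ on that path'' is not a well-posed reduction, and assertion (1), which concerns the unique maximum path to the \emph{vertex} $R_2$, cannot be inherited from a path to $x$. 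The logical order has to be reversed: first identify the maximum path to $R_2$ directly, then exhibit a well directed continuation toward $x$. Relatedly, your step ``the initial segment of the path to $x$ ending at $R_1$ is the maximum path to $R_1$'' is asserted, not proved; prefix-maximality is not automatic.

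The more serious gap is your treatment of the competitor ending $\cdots\to C\to R_2$ with $C=R_2\ominus R_1$. By Lemma \ref{fareytwoways} the penultimate vertex of any well directed path to $R_2$ is $R_1$ or $C$, so assertion (1) stands or falls with excluding $C$; yet you concede that $R_1\to C\to R_2$ ``is a genuine well directed approach to the isolated vertex $R_2$ and even carries an extra direction-changing edge,'' and you discard it only because it cannot be continued toward $x$. Continuability toward $x$ has no bearing on which path to $R_2$ maximizes direction-changing edges: if your concession stood, the maximum path to $R_2$ would end $C\to R_2$ and assertion (1) would be false. What actually eliminates this competitor is different and never appears in your argument: since $R_2=R_1\oplus C$, the detour $\cdots\to R_1\to C\to R_2$ yields a continued fraction whose last partial fraction is $\frac{1}{1}$, and such expansions are excluded outright by the definition of a maximum-$+1$ expansion; a path that reaches $C$ \emph{without} passing through $R_1$ must arrive from the other small neighbour $C\ominus R_1$ of $C$ and then ends with last partial fraction $\frac{-1}{2}$ (one checks $R_2=2C\ominus(C\ominus R_1)$), so it contributes $\epsilon=-1$ at the final step and a further count comparison against the path through $R_1$ is still required. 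Without this mechanism --- which is where the ``not ending with $\frac{1}{1}$'' clause does all the work --- the central claim of the proposition is left unproved, and indeed apparently contradicted by your own observation about the extra direction-changing edge.
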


  \begin{remark}  \label{longest cf1}  We have seen that $x\in\R$ has a finite $\f_{p^l}$-continued fraction if and only if $x\in\mX_{p^l}.$  Suppose $x\in\Q\setminus\mX_{p^l}$ so that its $\f_{p^l}$-continued fraction is infinite. The value of the infinite continued fraction $\frac{1}{2+}~\frac{-1}{2+}~\frac{-1}{2+}\cdots$ is 1. Here, we write two infinite $\f_{p^l}$-continued fraction expansions of $x\in\Q\setminus\mX_{p^l}.$ Suppose $x\not\in\mathcal{B}_{p^l}$. Then $N_x>0$ and the $\f_{p^l}$-continued fractions with maximum $+1$ of $R_1$ and $R_2$ are given by
%  	\begin{eqnarray*}
%  		R_1&= &\frac{1}{0+}~\frac{p^l}{b+}~\frac{\epsilon_{1} }{a_{1}+}~\frac{\epsilon_{2}}{a_{2}+}~\cdots\frac{\epsilon_{N_x}}{a_{N_x}};\\
%  		R_2&=& \frac{1}{0+}~\frac{p^l}{b+}~\frac{\epsilon_{1} }{a_{1}+}~\frac{\epsilon_{2}}{a_{2}+}~\cdots\frac{\epsilon_{N_x}}{a_{N_x}+}~\frac{\ep_{N_x+1}}{a_{N_x+1}},\\
%  	\end{eqnarray*}
  
  $$	R_1= \frac{1}{0+}~\frac{p^l}{b+}~\frac{\epsilon_{1} }{a_{1}+}~\frac{\epsilon_{2}}{a_{2}+}~\cdots\frac{\epsilon_{N_x}}{a_{N_x}} {\text{ and }}
  	R_2= \frac{1}{0+}~\frac{p^l}{b+}~\frac{\epsilon_{1} }{a_{1}+}~\frac{\epsilon_{2}}{a_{2}+}~\cdots\frac{\epsilon_{N_x}}{a_{N_x}+}~\frac{\ep_{N_x+1}}{a_{N_x+1}},\\
  $$
  	so that the following expressions are  $\f_{p^l}$-continued fractions of $x$
  	
  	\begin{subequations}\label{rationalnotinX}
  		\begin{equation}\label{rational1}
  		\frac{1}{0+}~\frac{p^l}{b+}~\frac{\epsilon_{1} }{a_{1}+}~\frac{\epsilon_{2}}{a_{2}+}~\frac{\ep_3}{a_3+}~\cdots\frac{\epsilon_{N_x}}{a_{N_x}+}~\frac{\ep_{N_x+1}}{a_{N_x+1}+y_{N_x+2}},
  		\end{equation}
  		\begin{equation}\label{rational2}
  		\frac{1}{0+}~\frac{p^l}{b+}~\frac{\epsilon_{1} }{a_{1}+}~\frac{\epsilon_{2}}{a_{2}+}~\cdots\frac{\epsilon_{N_x}}{a_{N_x}+}~\frac{\ep_{N_x+1}}{(a_{N_x+1}+2)-y_{N_x+2}},
  		\end{equation}
  			where $y_{N_x+2}=\frac{1}{2+}~\frac{-1}{2+}~\frac{-1}{2+}\cdots.$	 	\end{subequations} 	
  \label{longest cf2}
  	Suppose $x\in \frac{1}{p^i}\dot{\Z} $ for some $0\le i< l,$ then
  	\begin{subequations}
  		\begin{equation}\label{interger_cfr1}
  		x=\frac{1}{0+}~\frac{p^l}{(\lfloor{p^lx}\rfloor-1)+y};
  		\end{equation}
  		\begin{equation}\label{interger_cfr2}
  		x=\frac{1}{0+}~\frac{p^l}{(\lfloor{p^lx}\rfloor+1)-y},
  		\end{equation}
  	\end{subequations}
  	where $y=\frac{1}{2+}~\frac{-1}{2+}~\frac{-1}{2+}\cdots.$	
    	Similarly,  if $x\in \frac{1}{2p^i}\dot{\Z}$ for some $0\le i<l$ with  $p\ne2$, then
  	\begin{subequations}
  		\begin{equation}\label{halfinterger_cfr1}
  		x=\frac{1}{0+}~\frac{p^l}{\lfloor{p^lx}\rfloor+}~\frac{1}{3-y};
  		\end{equation}
  		\begin{equation}\label{halfinterger_cfr2}
  		x=\frac{1}{0+}~\frac{p^l}{(\lfloor{p^lx}\rfloor+1)+}~\frac{-1}{3-y},
  		\end{equation}
  	\end{subequations}
  	where $y=\frac{1}{2+}~\frac{-1}{2+}~\frac{-1}{2+}\cdots.$	
  \end{remark}

Using the fact that edges do not cross in $\f_{p^l}$, one can observe the following result.
  \begin{proposition}
  		Every $x\in\Q\setminus\mX_{p^l}$ has exactly two $\f_{p^l}$-continued fraction expansions  of $x$ with maximum $+1.$
  	\end{proposition}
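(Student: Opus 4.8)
The plan is to show exactly two $\f_{p^l}$-continued fraction expansions with maximum $+1$ by separately establishing existence (at least two) and the upper bound (at most two). Both parts will lean heavily on the structural results already assembled in the excerpt, especially Lemma~\ref{uniquefareysum} (the unique Farey decomposition $x=R_1\oplus R_2$), Proposition~\ref{forNx} (the well directed paths to $R_1$ and $R_2$), and Theorem~\ref{uniquemaximumflips} (uniqueness of the maximum-$+1$ path to a point of $\mX_{p^l}$ or an irrational).

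For existence, I would invoke Remark~\ref{longest cf1}, which already exhibits two explicit infinite $\f_{p^l}$-continued fractions for any $x\in\Q\setminus\mX_{p^l}$: expressions \eqref{rational1} and \eqref{rational2} in the generic case $x\notin\mathcal{B}_{p^l}$, and the analogous pairs \eqref{interger_cfr1}--\eqref{interger_cfr2} and \eqref{halfinterger_cfr1}--\eqref{halfinterger_cfr2} when $x$ lies in $\frac{1}{p^i}\dot{\Z}$ or $\frac{1}{2p^i}\dot{\Z}$. I would verify that each of these is genuinely a maximum-$+1$ expansion: the shared initial segment realizes the unique maximum-direction-changing path to $R_1$ (by Proposition~\ref{forNx}), and the two tails, both built from the fixed tail $y=\frac{1}{2+}~\frac{-1}{2+}~\frac{-1}{2+}\cdots$ whose value is $1$, each carry the maximal number of positive numerators. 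Since $R_1\ne R_2$, the two expansions are distinct, giving at least two.

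For the upper bound, the key is that any maximum-$+1$ expansion of $x$ corresponds to a well directed path with maximum direction changing edges from $\infty$ to $x$, and every such path must pass through the edge $R_1\sim_{p^l}R_2$ determined uniquely by Lemma~\ref{uniquefareysum}. Concretely: since $x=R_1\oplus R_2$ lies strictly between the adjacent vertices $R_1$ and $R_2$, and edges do not cross in $\f_{p^l}$ (Proposition~\ref{nocrossing}), any path approaching $x$ is eventually trapped in the interval cut out by the geodesic $R_1R_2$ and must therefore use this edge. By Theorem~\ref{uniquemaximumflips} the maximum-$+1$ path up to $R_1$ (respectively up to $R_2$) is unique, so the portion of the path before the edge $R_1\sim_{p^l}R_2$ is forced once we decide which endpoint is reached first; the two choices $R_1\to R_2$ versus $R_2\to R_1$ account for precisely the two expansions. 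After that edge, the remaining tail to $x$ is again governed by the no-crossing constraint and the maximality requirement, which pin down the forced tail $\frac{1}{2+}~\frac{-1}{2+}~\frac{-1}{2+}\cdots$ as in Remark~\ref{longest cf1}, leaving no further freedom.

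The main obstacle I anticipate is the upper-bound argument, specifically justifying rigorously that \emph{every} maximum-$+1$ path crosses the \emph{same} edge $R_1\sim_{p^l}R_2$ rather than some other edge that also straddles $x$. This requires combining the uniqueness of the Farey decomposition (Lemma~\ref{uniquefareysum}) with the no-crossing property to rule out alternative ``entry edges,'' and then applying the uniqueness theorem (Theorem~\ref{uniquemaximumflips}) on both sides. A subtlety is handling the boundary cases $x\in\frac{1}{p^i}\dot{\Z}$ and $x\in\frac{1}{2p^i}\dot{\Z}$ (where $N_x=0$ by Corollary~\ref{dist_B_p}) separately, since there the path to $R_1$ is trivially short and the structure of the two expansions is the explicit one recorded in \eqref{interger_cfr1}--\eqref{halfinterger_cfr2}; I would treat these as a short case analysis to confirm the count is still exactly two.
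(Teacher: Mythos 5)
Your existence half is fine and matches the paper's route: Remark~\ref{longest cf1} already exhibits the two expansions, and the paper only gestures at the converse via the no-crossing property. The problem is your upper-bound mechanism. You claim that every maximum-$+1$ path to $x$ must traverse the edge $R_1\sim_{p^l}R_2$, and that the two expansions correspond to the two directions $R_1\to R_2$ versus $R_2\to R_1$. Both claims are false. The traversal $R_2\to R_1$ can never occur, since denominators increase strictly along a well directed path and, by Proposition~\ref{forNx}, the maximal path to $R_2$ already passes through $R_1$. More importantly, one of the two expansions never uses the edge $R_1R_2$ at all: the $(N_x+1)$-th convergent of \eqref{rational2} is $(\oplus_2 R_1)\oplus R_2$, which lies strictly between $R_1$ and $x$, so that path goes from $R_1$ directly into the interval $(R_1,x)$ and approaches $x$ from the $R_1$ side, whereas \eqref{rational1} takes the edge $R_1\to R_2$ and spirals back toward $x$ from the $R_2$ side. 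In the boundary cases $x\in\mathcal{B}_{p^l}$ your picture fails even more starkly: for $x\in\Z$ the two expansions \eqref{interger_cfr1} and \eqref{interger_cfr2} start at the distinct vertices $(p^lx-1)/p^l$ and $(p^lx+1)/p^l$, and neither path ever visits the other vertex, so the two paths share no vertex except $\infty$, let alone the edge $R_1R_2$.

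What the no-crossing property actually gives you is that a path entering the interval $(R_1,R_2)$ must pass through the \emph{vertex} $R_1$ or $R_2$, not that it uses the \emph{edge} between them; that slippage is exactly where your count of two ceases to be airtight. The correct dichotomy you would need to establish is: for $N_x\ge1$, every maximum-$+1$ expansion agrees with the unique maximal path to $R_1$ up to $P_{N_x}=R_1$ (Lemma~\ref{lemma_uniquemaximumflips} applied to $R_1\in\mX_{p^l}$), and at $R_1$ there are exactly two admissible continuations toward $x$, namely via $R_2$ or via $(\oplus_2 R_1)\oplus R_2$, after which no-crossing and well-directedness force the tail $\frac{1}{2+}~\frac{-1}{2+}~\frac{-1}{2+}\cdots$ in each branch; for $x\in\mathcal{B}_{p^l}$ the branching instead happens at the choice of $P_0$. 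As written, your argument would conclude there is only one expansion in the cases where the edge $R_1R_2$ is not used, so the gap is not cosmetic.
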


  	\section{Best Approximations and Convergents}
  	Suppose $x$ is a real number. A reduced rational number $u/v$ is a best rational approximation of $x$ if $|vx-u|<|v'x-u'|$ for every $u'/v'\ne u/v$ with $0<v'\le v.$ %Note that  $|vx-u|<|v'x-u'|\implies|x-\frac{u}{v}|<|x-\frac{u'}{v'}|$ as $0<v'\le v.$
  	 Best rational approximations of a real number are described by the convergents of the regular continued fraction. Here, we introduce best  $\mX_{p^l}$-approximations  of a real number.
  
  	\begin{defi}
  		A rational number $u/v\in \mX_{p^l}$ is called a {\it best $\mX_{p^l}$-approximation of $x\in\R$}, if for every $u'/v'\in\mX_{p^l}$ different from $u/v$ with
  		$0< v' \le v$, we have $|vx-u|<|v'x-u'|$.
  	\end{defi}

  	\begin{lemma}\label{sandwich}
  		Let $\{\frac{p_i}{q_i}\}_{i\ge0}$ be a  sequence of $\f_{p^l}$-convergents of $x\in\R$. If $u/v\in\mX_{p^l}$, then 
  		there exists a unique
  		solution $(\alpha,\beta)\in\Z\times\Z$ of the following system of equations 
  		\begin{equation}\label{systemofequations}
  		\begin{pmatrix} u\\v \end{pmatrix}= \alpha \begin{pmatrix} p_{n+1}\\q_{n+1}\end{pmatrix}+\beta\begin{pmatrix} p_n\\q_n\end{pmatrix}.
  		\end{equation}
  	\end{lemma}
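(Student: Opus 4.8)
The plan is to treat \eqref{systemofequations} as an integer linear system and reduce everything to the determinant of the coefficient matrix $\begin{pmatrix} p_{n+1} & p_n \\ q_{n+1} & q_n \end{pmatrix}$. The whole statement hinges on this determinant equaling $\pm p^l$ together with two divisibility facts, so I would establish these first.

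First I would compute $D_i := p_i q_{i-1} - p_{i-1} q_i$. Substituting the convergent recurrences $p_i = a_i p_{i-1} + \ep_i p_{i-2}$ and $q_i = a_i q_{i-1} + \ep_i q_{i-2}$ and cancelling the $a_i$-terms yields $D_i = -\ep_i D_{i-1}$. With $(p_{-1},q_{-1}) = (1,0)$ and $(p_0,q_0) = (b,p^l)$ the base case is $D_0 = -p^l$, so $|D_i| = p^l$ for all $i \ge 0$; in particular the coefficient determinant is $D_{n+1} = \pm p^l \ne 0$. Because this determinant is nonzero, the columns are linearly independent over $\R$, which immediately forces uniqueness of any solution; Cramer's rule then gives the unique real solution $\alpha = (uq_n - vp_n)/D_{n+1}$ and $\beta = (vp_{n+1} - uq_{n+1})/D_{n+1}$.

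The real work is showing $\alpha,\beta \in \Z$, i.e.\ that each numerator is divisible by $p^l = |D_{n+1}|$. For this I would use two observations. From $q_{-1}=0$, $q_0 = p^l$, and the recurrence for $q_i$, an easy induction gives $p^l \mid q_i$ for every $i$. And since $u/v \in \mXp$, the definition of $\mXp$ in \eqref{X_n} gives $p^l \mid v$. Each Cramer numerator, namely $uq_n - vp_n$ and $vp_{n+1} - uq_{n+1}$, is then a difference of two multiples of $p^l$, hence divisible by $p^l$; dividing by $D_{n+1} = \pm p^l$ leaves integers.

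The main obstacle is exactly this integrality step rather than uniqueness, which is free. Conceptually, the two convergent vectors span a sublattice of $\Z^2$ of index $|D_{n+1}| = p^l$ contained in $\Z \times p^l\Z$, and the latter also has index $p^l$, so the two lattices coincide; thus $(u,v)$ lies in the span precisely because $p^l \mid v$. The delicate point to get right is that the divisibilities $p^l \mid q_i$ and $p^l \mid v$ must match the exact power $p^l$ appearing in the determinant, since any mismatch would leave a $p$-adic obstruction to integrality.
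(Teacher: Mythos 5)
Your proof is correct and follows essentially the same route as the paper: compute the determinant $\pm p^l$, apply Cramer's rule for existence and uniqueness, and deduce integrality from $p^l\mid v$ and $p^l\mid q_i$. You merely supply the inductions (for $|D_i|=p^l$ and $p^l\mid q_i$) that the paper states without proof.
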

  	\begin{proof} Since the determinant of the coefficient matrix of Eqs.\eqref{systemofequations} is $\pm p^l\ne0$ and the system has a unique solution. In fact, $\alpha=\pm (uq_n-vp_n)/p^l$ and
  		$\beta=\pm (vp_{n+1}-uq_{n+1})/p^l$. Note that $p^l$ divides $v,q_n$ and $q_{n+1}$ so that $(\alpha,\beta)\in\Z\times\Z$.
  	\end{proof}
  	%	The following propositions follow from Theorem \ref{distinctconvergents} and \cite[Proposition 6.1]{seema}.
  	
  	\begin{proposition}\label{appposition}
  		Let $x$ and $\{p_i/q_i\}_{i=0}^M$ be the sequence of its $\f_{p^l}$-convergents with maximum $+1.$ Then $u/v\in \mX_{p^l}$ with $q_{M-1}\le v<q_M$ is not its best $\mX_{p^l}$-approximation. 
  	\end{proposition}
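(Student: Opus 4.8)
The plan is to beat any admissible $u/v$ by the immediately preceding convergent $p_{M-1}/q_{M-1}$, which lies in $\mX_{p^l}$ and has the smaller denominator $q_{M-1}\le v$. Since the listed sequence of convergents is finite, $x=p_M/q_M\in\mX_{p^l}$, so it is convenient to set $\theta_i=q_ix-p_i$; then $\theta_M=0$, and because distinct convergents are distinct (Theorem \ref{distinctconvergents}(3)) we have $\theta_{M-1}=q_{M-1}(x-p_{M-1}/q_{M-1})\ne0$. The whole argument will reduce to showing $|vx-u|\ge|\theta_{M-1}|$.

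First I would apply Lemma \ref{sandwich} with $n=M-1$ to obtain unique integers $\alpha,\beta$ with $(u,v)=\alpha(p_M,q_M)+\beta(p_{M-1},q_{M-1})$, so that $v=\alpha q_M+\beta q_{M-1}$ and $u=\alpha p_M+\beta p_{M-1}$. The constraint $q_{M-1}\le v<q_M$, together with $0<q_{M-1}<q_M$ (Theorem \ref{distinctconvergents}(2)), pins down the coefficients. If $\beta=0$, then $v=\alpha q_M$ cannot lie in $[q_{M-1},q_M)$ for any integer $\alpha$, so $\beta\ne0$ and hence $|\beta|\ge1$. If $\alpha=0$, then $(u,v)=\beta(p_{M-1},q_{M-1})$, and reducedness of $u/v$ (recall $\gcd(p_{M-1},q_{M-1})=1$, so $\gcd(u,v)=|\beta|$) forces $\beta=1$, i.e. $u/v=p_{M-1}/q_{M-1}$. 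Thus $\alpha\ne0$ is equivalent to $u/v\ne p_{M-1}/q_{M-1}$.

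The main computation is then immediate. Using $\theta_M=0$,
\[
vx-u=\alpha\theta_M+\beta\theta_{M-1}=\beta\theta_{M-1},
\]
so $|vx-u|=|\beta|\,|\theta_{M-1}|\ge|\theta_{M-1}|=|q_{M-1}x-p_{M-1}|$. Consequently, for every $u/v$ in the range with $u/v\ne p_{M-1}/q_{M-1}$, the convergent $p_{M-1}/q_{M-1}\in\mX_{p^l}$ has denominator $q_{M-1}\le v$ and satisfies $|q_{M-1}x-p_{M-1}|\le|vx-u|$, which contradicts the strict inequality required in the definition of a best $\mX_{p^l}$-approximation. Hence such a $u/v$ is not a best $\mX_{p^l}$-approximation of $x$.

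I expect the only delicate point to be the sign/denominator bookkeeping of the second step: ruling out $\beta=0$ and using reducedness to identify the single boundary fraction $u/v=p_{M-1}/q_{M-1}$, which is exactly the convergent itself and is therefore the one element of the range not covered by the non-best conclusion (it is treated by the complementary best-approximation result). The vanishing $\theta_M=0$, coming from $x=p_M/q_M$, is what makes the estimate $|vx-u|\ge|\theta_{M-1}|$ unconditional; for an infinite expansion one would instead first establish that $\theta_{M-1}$ and $\theta_M$ have opposite signs, via the relation $x_{i+1}\theta_i+\ep_{i+1}\theta_{i-1}=0$ read off from Theorem \ref{distinctconvergents}(5), and then apply the same triangle-inequality estimate to $\alpha\theta_M+\beta\theta_{M-1}$ with $\alpha,\beta$ of opposite sign.
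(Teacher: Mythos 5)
Your proof is correct and follows the same basic route as the paper: apply Lemma \ref{sandwich} with $n=M-1$ to write $(u,v)=\alpha(p_M,q_M)+\beta(p_{M-1},q_{M-1})$ and then compare $|vx-u|$ with $|q_{M-1}x-p_{M-1}|$. In fact your execution is more complete than the paper's own. The paper asserts outright that the only admissible $u/v$ is $(p_M-p_{M-1})/(q_M-q_{M-1})$, i.e.\ $(\alpha,\beta)=(1,-1)$, which does not exhaust the possibilities (for instance $(\alpha,\beta)=(1,-2)$ also yields $q_{M-1}\le v<q_M$ whenever $q_M\ge 3q_{M-1}$); your identity $vx-u=\alpha\theta_M+\beta\theta_{M-1}=\beta\,\theta_{M-1}$, together with the observation that $\beta=0$ is impossible in the stated range, disposes of all cases at once and yields $|vx-u|\ge|q_{M-1}x-p_{M-1}|$ unconditionally. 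One caveat applies to both arguments: as literally stated the proposition admits $v=q_{M-1}$, and the fraction $u/v=p_{M-1}/q_{M-1}$ itself must be excluded, since that convergent generally \emph{is} a best $\mX_{p^l}$-approximation; you flag this explicitly, while the paper does it silently with the remark ``Note that $q_{M-1}<v$.'' With that exclusion understood, your argument is sound.
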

  \begin{proof}
  	By Lemma \ref{sandwich},  $u/v=(p_{M}-p_{M-1})/(q_{M}-q_{M-1})$ (since $v>0,$ ). Thus, $|vx-u|=\frac{p^l}{q_M}=|q_{M-1}x-p_{M-1}|$. Note that $q_{M-1}<v$. Therefore, $u/v$ is not a best approximation.
  \end{proof}
  		\begin{proposition}\label{tail1}
  		Let $y_i$ be the $i$-th fin  of  $\f_{p^l}$-continued fraction of $x\not\in\Q\setminus\mX_{p^l}$ with maximum +1 . Then $|y_i|<1$ for every $i\ge1$.
  		\end{proposition}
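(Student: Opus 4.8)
The plan is to prove that for $x \not\in \Q\setminus\mX_{p^l}$, every fin $y_i$ of the $\f_{p^l}$-continued fraction with maximum $+1$ satisfies the \emph{strict} inequality $|y_i| < 1$, sharpening part (4) of Theorem \ref{distinctconvergents} which only gives $|y_i| \le 1$. Since we already know $|y_i| \le 1$, the entire task reduces to ruling out the boundary case $|y_i| = 1$. I would argue by contradiction: suppose $|y_i| = 1$ for some $i \ge 1$.

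\textbf{Translating $|y_i|=1$ into a tail condition.} Recall $y_i = \frac{\ep_i}{a_i+}~\frac{\ep_{i+1}}{a_{i+1}+}\cdots$ and $x_{i+1} = 1/|y_{i+1}|$, with $\ep_i = \sign(y_i)$. First I would unwind what $|y_i| = 1$ means for the partial quotients. Writing $|y_i| = \frac{1}{a_i + \ep_{i+1}\,|y_{i+1}| \cdot (\text{sign bookkeeping})}$, the condition $|y_i| = 1$ forces the denominator to equal $1$, which (using $a_i \ge 1$ and $|y_{i+1}| \le 1$ together with the admissibility constraints $a_i + \ep_{i+1} \ge 1$ and $a_i + \ep_i \ge 1$ from the definition) pins down $a_i$ and $\ep_{i+1}$ very rigidly: essentially $a_i = 1$ with $\ep_{i+1} = +1$ and $|y_{i+1}| = 1$, or $a_i = 2$ with $\ep_{i+1} = -1$ and $|y_{i+1}| = 1$. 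In either case $|y_i| = 1$ propagates to $|y_{i+1}| = 1$, so the equality, once it occurs, must persist for all subsequent indices. This means the tail from position $i$ onward is forced to be one of the degenerate periodic expansions, namely a tail of the shape $\frac{1}{2+}~\frac{-1}{2+}~\frac{-1}{2+}\cdots$ (value $1$) or its sign-variant $\frac{1}{1+}~\frac{1}{1+}\cdots$, precisely the infinite tails recorded in Remark \ref{longest cf1}.

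\textbf{Deriving the contradiction.} Having shown that $|y_i| = 1$ forces the continued fraction to terminate (for $x \in \mX_{p^l}$) with such a degenerate block or to have one of these forbidden infinite tails, I would split into two cases. If $x \in \mX_{p^l}$, the expansion is finite, and a tail identically equal to $1$ in absolute value is incompatible with the maximum $+1$ normalization: by the definition of $\f_{p^l}$-continued fraction with maximum $+1$, the finite expansion does not end with $1/1$, and the rigid structure forced above would either produce such a trailing $1/1$ or fail conditions (1)--(2) of the definition, a contradiction. If $x \in \R\setminus\Q$, then by Theorem \ref{uniquemaximumflips} the maximum $+1$ expansion is unique; but a persistent $|y_j|=1$ tail is exactly the infinite block $\frac{1}{2+}~\frac{-1}{2+}\cdots$ whose value is rational ($=1$), forcing the limiting value $x$ to lie in $\Q$ via formula (5) of Theorem \ref{distinctconvergents}, $x = \frac{x_{i+1}p_i + \ep_{i+1}p_{i-1}}{x_{i+1}q_i + \ep_{i+1}q_{i-1}}$ with $x_{i+1} = 1$, contradicting irrationality. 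Either way the assumption $|y_i| = 1$ is untenable.

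\textbf{Main obstacle.} The delicate part is the rigidity step: showing cleanly that $|y_i| = 1$ forces $|y_{i+1}| = 1$ and simultaneously pins the pair $(\ep_{i+1}, a_i)$ to the two admissible degenerate shapes, using only the monotone bound $|y_{i+1}| \le 1$ together with the integrality of $a_i$ and the admissibility inequalities. I expect the bookkeeping of signs $\ep_i = \sign(y_i)$ through the recursion $x_{i+1} = 1/|y_{i+1}|$ and the relation expressing $y_i$ in terms of $a_i, \ep_{i+1}, y_{i+1}$ to require care, since one must verify that no other combination of a finite $a_i$ and $|y_{i+1}|<1$ can yield $|y_i|=1$. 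Once this propagation is established, identifying the resulting tail with the known degenerate expansions of Remark \ref{longest cf1} and invoking uniqueness (Theorem \ref{uniquemaximumflips}) or the no-trailing-$1/1$ clause of the maximum $+1$ definition closes the argument.
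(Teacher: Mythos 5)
Your overall strategy is the paper's: start from $|y_i|\le 1$ (Theorem \ref{distinctconvergents}), suppose $|y_i|=1$ for some $i$, show the recursion rigidly forces a degenerate tail, and derive a contradiction either from the ``not ending with $1/1$'' clause of the maximum $+1$ definition or from the fact that a forced infinite tail of value $\pm1$ makes $x$ a rational number outside $\mX_{p^l}$. The paper compresses exactly this into three lines using the recursion $y_{i+1}=1/|y_i|-a_i$ from Corollary \ref{algoformaximumflips}.

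However, the rigidity step --- which you correctly single out as the crux --- is carried out incorrectly as written. From $y_i=\ep_i/(a_i+y_{i+1})$ and $|y_i|=1$ one gets $a_i+y_{i+1}=1$ (the value $-1$ is impossible since $a_i\ge1$ and $|y_{i+1}|\le1$ force $a_i+y_{i+1}\ge0$), i.e.\ $y_{i+1}=1-a_i$; so the only possibilities are $a_i=1$ with $y_{i+1}=0$, or $a_i=2$ with $y_{i+1}=-1$. Your first alleged case ($a_i=1$, $\ep_{i+1}=+1$, $|y_{i+1}|=1$) does not satisfy $|y_i|=1$ at all: it gives $|y_i|=1/2$, and the associated tail $\frac{1}{1+}~\frac{1}{1+}\cdots$ has value $(\sqrt5-1)/2\ne1$, so it is not one of the degenerate tails. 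The genuine $a_i=1$ branch is \emph{termination} ($y_{i+1}=0$, the expansion ends in $\ep_i/1$), and that is precisely where the ``not ending with $1/1$'' clause (for $\ep_i=+1$) and condition (2) of the definition, $a_i+\ep_i\ge1$ (for $\ep_i=-1$), are needed; equality does not propagate there. Only the $a_i=2$ branch propagates, yielding the infinite tail $\frac{\ep_i}{2+}~\frac{-1}{2+}~\frac{-1}{2+}\cdots$ of value $\pm1$, whence $x=(p_{i-1}+\ep_i p_{i-2})/(q_{i-1}+\ep_i q_{i-2})\in\Q$ by Theorem \ref{distinctconvergents}(5), while the expansion is infinite, so $x\in\Q\setminus\mX_{p^l}$ --- the desired contradiction. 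With the case analysis repaired in this way your argument closes and coincides with the paper's.
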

  		\begin{proof}
  			By Theorem \ref{distinctconvergents}, for each $i\ge1,$ $|y_i|\le1$  and if $|y_i|=1$ for some $i\ge1$ then $x$ is a rational number. 
  		  By Corollary \ref{algoformaximumflips}, $y_{i+1}=1/|y_i|-a_i$ so that  $a_i=2$ for being $\f_{p^l}$-continued fraction with $+1$ which is infinite. We get a contradiction as $x\not\in\Q\setminus\mX_{p^l}.$
  		\end{proof}
  	
  		\begin{corollary}\label{tail2} 
  			 Let  $1/y_i=\pm2$ be the $i$-th fin  of the $\f_{p^l}$-continued fraction of $x\notin\Q\setminus \mX_{p^l}$ with maximum +1  for some $i\ge1$. Then $x\in\mX_{p^l}$. 
  		\end{corollary}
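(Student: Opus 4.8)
The plan is to unwind the recursion for the fin one step and read off the constraints that the ``maximum $+1$'' algorithm of Corollary~\ref{algoformaximumflips} places on $a_i$. By Theorem~\ref{distinctconvergents} we have $\ep_i=\sign(y_i)$, so the hypothesis $1/y_i=\pm2$ is exactly $1/|y_i|=2\in\N$. Hence the integer branch $(2)(a)$ of the algorithm applies, and the only admissible values are $a_i\in\{1,2\}$; together with the recursion $y_{i+1}=1/|y_i|-a_i$ from step $(3)$ this gives $y_{i+1}=2-a_i$. Thus everything reduces to deciding which of the two values $a_i$ takes.

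First I would dispose of the case $a_i=1$. Here $y_{i+1}=1$, so $|y_{i+1}|=1$; but $x\not\in\Q\setminus\mX_{p^l}$, so Proposition~\ref{tail1} forces $|y_{i+1}|<1$, a contradiction. (Equivalently, $y_{i+1}=1=\frac{1}{2+}~\frac{-1}{2+}~\frac{-1}{2+}\cdots$ by Remark~\ref{longest cf1}, which would make the tail infinite and exhibit $x$ as a rational outside $\mX_{p^l}$, again contradicting the hypothesis.) Therefore $a_i=2$ is the only possibility, giving $y_{i+1}=0$. Since $y_{i+1}=0$ means the continued fraction terminates at the $i$-th convergent, the expansion of $x$ is finite, and by Theorem~\ref{main1} a real number has a finite $\f_{p^l}$-continued fraction precisely when it lies in $\mX_{p^l}$; hence $x\in\mX_{p^l}$, as claimed.

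The argument is short because the integrality $1/|y_i|=2$ pins the algorithm down to a binary choice. The one point that needs care — and is the only genuine obstacle — is justifying that $a_i$ cannot exceed $2$: this is exactly where the specific form of branch $(2)(a)$ of Corollary~\ref{algoformaximumflips} (namely $a_i\in\{\,1/|y_i|-1,\ 1/|y_i|\,\}$ when $1/|y_i|\in\N$) is essential, as opposed to the non-integer branch $(2)(b)$. I would therefore state explicitly that $1/|y_i|=2\in\N$ places us in the integer branch, so that $a_i\le 2$, before splitting into the two cases above.
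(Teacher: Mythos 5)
Your proof is correct and follows essentially the same route as the paper: restrict $a_i$ to $\{1,2\}$ via the integer branch of Corollary~\ref{algoformaximumflips}, eliminate $a_i=1$ using Proposition~\ref{tail1} (since $y_{i+1}=1$ would violate $|y_{i+1}|<1$), and conclude $y_{i+1}=0$, so the expansion terminates and $x\in\mX_{p^l}$. Your write-up is merely more explicit than the paper's about why $a_i\le 2$ and about invoking Theorem~\ref{main1} for the final step.
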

  		\begin{proof}
  			By Proposition \ref{tail1}, $y_{i+1}\ne\pm1$ and so $a_i=2.$ Thus, $y_{i+1}=0$ and $x\in\mX_{p^l}$. 
  		\end{proof}
  	
  		\begin{proposition}\label{rationalwithfareysum}
  			Suppose $x\in \Q\setminus(\mX_{p^l}\cup \mathcal{B}_{p^l}),$ where $\mathcal{B}_{p^l}$ is given by Equation \eqref{Bp}. 
  			If $y_i$ is the $i$-th fin of a  $\f_{p^l}$-continued fraction of $x$ with maximum +1  and $N_x$ is as in \eqref{rational1} and \eqref{rational2}, then  $y_i\neq\pm1$ and $y_{i-1}\ne\pm1/2$ for each $1\le  i\le N_x+1$ .
  			\end{proposition}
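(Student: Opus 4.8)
The plan is to reduce both assertions to a single estimate, namely $|y_i|<1$ for $1\le i\le N_x+1$, and then to read off the $\pm1/2$ statement from it. First I would fix the max $+1$ expansion to be the one in \eqref{rational1}; this is harmless because the fins $y_1,\dots,y_{N_x+1}$ are determined by $x$ together with the common convergents $P_{N_x-1},P_{N_x}$ through the identity in Theorem \ref{distinctconvergents}(5), so they do not depend on which of the two max $+1$ expansions one picks. Since $x\notin\mathcal{B}_{p^l}$, Remark \ref{longest cf1} gives $N_x\ge1$ and tells us that the repeating tail beginning at index $N_x+2$ is $\frac{1}{2+}~\frac{-1}{2+}~\frac{-1}{2+}\cdots$, whose value is $1$; thus $y_{N_x+2}=1$.

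For the bound $|y_i|<1$ I would argue by downward induction on $i$. For the base case, $y_{N_x+1}=\dfrac{\ep_{N_x+1}}{a_{N_x+1}+y_{N_x+2}}=\dfrac{\ep_{N_x+1}}{a_{N_x+1}+1}$, and since $a_{N_x+1}\ge1$ this has absolute value $\le 1/2<1$. For the inductive step, suppose $|y_{i+1}|<1$ with $1\le i\le N_x$ and use $y_i=\ep_i/(a_i+y_{i+1})$. If $a_i\ge2$ then $a_i+y_{i+1}>a_i-1\ge1$, so $|y_i|<1$. If $a_i=1$, then the defining conditions $a_i+\ep_{i+1}\ge1$ and $a_i+\ep_i\ge1$ force $\ep_i=\ep_{i+1}=+1$, whence $y_{i+1}>0$ and $a_i+y_{i+1}=1+y_{i+1}>1$, again giving $|y_i|<1$. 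This establishes $|y_i|<1$, and in particular $y_i\neq\pm1$, for every $1\le i\le N_x+1$.

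For the values $\pm1/2$, suppose $y_j=\pm1/2$ for some $1\le j\le N_x$. Then $a_j+y_{j+1}=2$; because $a_j\in\N$ and, by the previous step, $y_{j+1}\in(-1,1)$, the only possibility is $a_j=2$ and $y_{j+1}=0$. But $y_{j+1}=0$ means the continued fraction terminates at the $j$-th convergent, so $x=P_j$ is the value of a finite $\f_{p^l}$-continued fraction and hence $x\in\mX_{p^l}$, contradicting $x\notin\mX_{p^l}$. Thus $y_j\neq\pm1/2$ for $1\le j\le N_x$; the boundary index $i=1$ (i.e.\ $y_0=p^l/(b+y_1)$) is handled the same way, since $y_0=\pm1/2$ would force $y_1=0$ and $b=\pm2p^l$, both impossible as $x\notin\mX_{p^l}$ and $\gcd(b,p)=1$. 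Together these give exactly $y_i\neq\pm1$ and $y_{i-1}\neq\pm1/2$ for $1\le i\le N_x+1$.

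The only genuinely delicate point is the base case of the induction: everything hinges on knowing \emph{where} the repeating tail of value $1$ first appears, that is, that $y_{N_x+2}=1$ while all earlier fins are still being built up from genuine partial quotients. This is precisely the content of Remark \ref{longest cf1}; conceptually it rests on the uniqueness of the Farey decomposition $x=R_1\oplus R_2$ in Lemma \ref{uniquefareysum}, since if some earlier fin equalled $+1$ one would obtain $x=P_{i-1}\oplus P_{i-2}$, forcing $\{P_{i-2},P_{i-1}\}=\{R_1,R_2\}=\{P_{N_x},P_{N_x+1}\}$ and hence $i=N_x+2$. Once the tail is correctly located, the downward propagation of $|y_i|<1$ and the integrality argument are routine.
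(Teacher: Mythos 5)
Your proof is correct, and it rests on the same anchor as the paper's, but the propagation runs in the opposite direction. The paper first checks $y_{N_x+1}\ne\pm1$ from the explicit expansions \eqref{rational1}--\eqref{rational2} and then shows that a hypothetical $y_i=\pm1$ with $i<N_x+1$ propagates \emph{forward}: either $a_i=1$, forcing $y_{i+1}=0$ and termination (contradicting $x\notin\mX_{p^l}$), or $a_i=2$, forcing $y_{i+k}=-1$ for all $k$ and contradicting $y_{N_x+1}\ne\pm1$. Your downward induction on the strict bound $|y_i|<1$ is essentially the contrapositive of that step, anchored at the same place, namely the explicitly located tail of value $1$ giving $|y_{N_x+1}|\le 1/2$. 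What your packaging buys is uniformity: the strict bound (the analogue of Proposition \ref{tail1} for these rationals) disposes of $\pm1$ outright and, with one integrality step, of $\pm1/2$ as well, whereas the paper dismisses the second assertion with ``a similar argument.'' Two points worth tightening. First, your reduction to expansion \eqref{rational1} is valid, but it is cleaner to say directly that the two expansions share $b,\ep_1,a_1,\dots,a_{N_x},\ep_{N_x+1}$ and that $a_{N_x+1}+y_{N_x+2}=(a_{N_x+1}+2)-y_{N_x+2}=a_{N_x+1}+1$, so the fins $y_1,\dots,y_{N_x+1}$ literally coincide. Second, in the boundary case $i=1$ of the second assertion, $b+y_1=\pm2p^l$ forces $y_1=\pm2p^l-b\in\Z$ with $|y_1|\le1$, so besides $y_1=0$ you must also exclude $y_1=\pm1$; this is available to you, but only because the first assertion has already been proved, so the order of the two halves of your argument matters and should be made explicit.
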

  			\begin{proof}We can see that $y_{N_x+1}\ne\pm1$ in each case.	Suppose $y_i=\pm1$ for some $i< N_x+1$. Then by Theorem \ref{algoformaximumflips}, $a_i=1$, (with $\ep_i=1$)  or $a_i=2$. If $a_i=1$, then $y_{i+1}=0$ which contradicts that $x\not\in\mX_{p^l}$. If $a_i=2$, then $y_{i+k}=-1, \forall k\ge1$  which contradicts that $y_{N_x+1}\neq\pm1$. The other part of the proposition can be proved by a similar argument.
  			\end{proof}

  		\begin{lemma}\label{bestappX} 
  			If $u/v\in\mX_{p^l}$ is  a best $\mX_{p^l}$-approximation of $r/(p^ls)\in\mX_{p^l}$, then $v\le p^ls$.
  		\end{lemma}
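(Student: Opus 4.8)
The plan is to use the target rational $x=r/(p^ls)$ itself as a competing approximation. Since $x\in\mX_{p^l}$ by hypothesis, it is an admissible candidate $u'/v'$ in the definition of a best $\mX_{p^l}$-approximation, and it carries the \emph{perfect} error $|v'x-u'|=|p^ls\cdot x-r|=0$ together with denominator $v'=p^ls$. The whole argument hinges on this single observation: the point being approximated is itself a vertex of $\mX_{p^l}$, so it competes against $u/v$ with zero error.

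First I would dispose of the trivial case $u/v=x$. Since both $u/v$ and $r/(p^ls)$ are written in reduced form (elements of $\mX_{p^l}$ satisfy $\mathrm{gcd}(u,v)=1$ and $\mathrm{gcd}(r,p^ls)=1$), equality of the fractions forces $u=r$ and $v=p^ls$, so the desired bound $v\le p^ls$ holds with equality. Next, suppose $u/v\ne x$ and, toward a contradiction, that $v\ge p^ls$. Then $x=r/(p^ls)$ is an element of $\mX_{p^l}$ distinct from $u/v$ with $0<v'=p^ls\le v$, so the best-approximation property applied to the competitor $x$ forces $|vx-u|<|v'x-u'|=0$. This is impossible because $|vx-u|\ge0$. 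Hence $v<p^ls$, and in particular $v\le p^ls$. Combining the two cases yields $v\le p^ls$ unconditionally.

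As for the main obstacle: there is essentially none of substance here, which is itself worth flagging, since the statement sits among more delicate results about convergents. The only genuine point of care is to split off the equality case $u/v=x$ (which produces $v=p^ls$) from the case $u/v\ne x$ (which produces the strict inequality $v<p^ls$), because the defining inequality $|vx-u|<|v'x-u'|$ is strict and would otherwise be applied to two equal fractions. Once that bookkeeping is in place, the proof is immediate and requires none of the structural machinery (Farey sums, well-directed paths, the convergent recurrences) developed earlier in the paper.
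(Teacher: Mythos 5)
Your proof is correct and follows essentially the same route as the paper's: assume the denominator exceeds $p^ls$ and use $x=r/(p^ls)$ itself as a competitor with zero error $|p^lsx-r|=0$ to contradict the strict inequality in the definition of best $\mX_{p^l}$-approximation. Your explicit handling of the equality case $u/v=x$ is a minor additional care the paper leaves implicit (its hypothesis $v>p^ls$ already rules out that case), but the argument is the same.
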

  		\begin{proof}
  			Suppose $v>p^ls$. Then $|vx-u|\ge0=|p^ls x-r|$ so that we get a contradiction. 
  		\end{proof}
  		\begin{lemma}\label{noapproximation}
  			Suppose $x\in \Q\setminus\mX_{p^l}$ with $x=R_1\oplus R_2$ as in Lemma \ref{uniquefareysum}. Let $u/v$ be a best $\mX_{p^l}$-approximation of $x$. Then $v\le p^ls_1$, where $R_1=r_1/(p^ls_1)$ and $R_2=r_2/(p^ls_2)$. Moreover, $x\in\mathcal{B}_{p^l}$ has no best $\mX_{p^l}$-approximation.
  		\end{lemma}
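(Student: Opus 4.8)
The plan is to reduce both assertions to one sharp lower bound for the approximation error $|vx-u|$ taken over the entire vertex set $\mX_{p^l}$, and then to exploit the fact that this bound is attained exactly at $R_1$ and $R_2$.

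First I would establish that $\min_{u/v\in\mX_{p^l}}|vx-u|=1/(s_1+s_2)$, attained at both $R_1$ and $R_2$. Writing $x=r/(p^ks)$ in lowest terms as in Lemma \ref{uniquefareysum}, with $p\nmid s$ and $0\le k<l$, any $u/v\in\mX_{p^l}$ has $p^l\mid v$, say $v=p^lw$, and $\mathrm{gcd}(u,p)=1$. A direct computation gives
$$vx-u=\frac{p^{l-k}wr-us}{s},$$
and since $l-k\ge1$ while $p\nmid us$, the integer $p^{l-k}wr-us$ is nonzero modulo $p$; hence $|vx-u|\ge 1/s=1/(s_1+s_2)$. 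For $u/v=R_1$ one has $w=s_1$ and $u=r_1$, and using $r_1+r_2=p^{l-k}r$, $s_1+s_2=s$ together with the adjacency relation $r_2s_1-r_1s_2=\pm1$ (which is precisely $R_1\sim_{p^l}R_2$), the numerator collapses to $s_1r_2-r_1s_2=\pm1$, so $|p^ls_1x-r_1|=1/(s_1+s_2)$. The same computation applied to $R_2$ shows it too realises the minimum. With this in hand the bound $v\le p^ls_1$ is immediate: if $u/v$ were a best $\mX_{p^l}$-approximation with $v>p^ls_1$, then $R_1$ is a competitor of strictly smaller denominator different from $u/v$, so the defining inequality would force $|vx-u|<|p^ls_1x-r_1|=1/(s_1+s_2)$, contradicting the global lower bound. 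Hence every best approximation satisfies $v\le p^ls_1$.

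For the final assertion, let $x\in\mathcal{B}_{p^l}$. By Corollary \ref{dist_B_p} we have $N_x=0$, so $R_1$ is a neighbour of $\infty$ and therefore $s_1=1$, i.e. $R_1$ carries the minimal possible denominator $p^l$; any best $\mX_{p^l}$-approximation would then have $v\le p^l$, forcing $v=p^l$ since $p^l\mid v$. The crux is that at such $x$ the minimal error is realised by two distinct vertices of denominator exactly $p^l$. For $x\in\frac{1}{2p^i}\dot{\Z}$ with $p\neq2$ these are $R_1=t/p^l$ and $R_2=(t+1)/p^l$ (so $s_1=s_2=1$), both at error $1/2$. For $x\in\frac{1}{p^i}\dot{\Z}$, and for $x\in\frac{1}{2^k}\dot{\Z}$ when $p=2$, one has $p^lx\in\Z$, so $vx-u$ is a nonzero integer for every $u/v\in\mX_{p^l}$, whence the global minimum is $1$ and is attained by the two nearest vertices $(\lfloor p^lx\rfloor-1)/p^l$ and $(\lfloor p^lx\rfloor+1)/p^l$. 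In every case two distinct vertices of the minimal denominator $p^l$ realise the minimum, so neither can satisfy the strict inequality required of a best approximation tested against the other; therefore no best $\mX_{p^l}$-approximation of $x$ exists.

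I expect the main obstacle to be the bookkeeping in this last step: the families $\frac{1}{p^i}\dot{\Z}$ (and $\frac{1}{2^k}\dot{\Z}$ for $p=2$) fall outside the Farey-sum decomposition of Lemma \ref{uniquefareysum}, since there $s=1$ admits no splitting $s_1+s_2=s$ with $s_1,s_2\ge1$, and so they must be handled through their explicit pair of expansions \eqref{interger_cfr1}--\eqref{interger_cfr2} rather than via $R_1\oplus R_2$. The recurring technical heart of the whole argument, however, is the divisibility observation $p^{l-k}wr-us\not\equiv0\pmod p$, which pins down the exact minimal error and drives both halves of the lemma.
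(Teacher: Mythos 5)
Your proof is correct and takes essentially the same route as the paper's: the lower bound $|vx-u|\ge 1/s$ (which the paper leaves implicit but which indeed rests on the mod-$p$ nonvanishing of the numerator you compute) attained at $R_1$ via $s_1r_2-r_1s_2=\pm1$, and, for $x\in\mathcal{B}_{p^l}$, the exhibition of two tied competitors of denominator $p^l$ in each of the families $\frac{1}{p^i}\dot{\Z}$ and $\frac{1}{2p^i}\dot{\Z}$. Your explicit treatment of the degenerate case $s=1$, where the decomposition of Lemma \ref{uniquefareysum} does not literally apply, matches the paper's separate handling of those families.
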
	
  		\begin{proof} Suppose $x=r/(p^is)\in \Q\setminus\mX_{p^l}$ with $\mathrm{gcd}(r,s)=1=\mathrm{gcd}(r,p)$ and $0\le i\le l-1.$ 
  			Suppose $v>p^ls_1$. Observe that $p^{l-i}r={r_1+r_2}$ and $p^ls={p^ls_1+p^ls_2}$. Hence $|vx-u|\geq1/s=|p^ls_1 x-r_1|$ so that $u/v$ is not a best $\mX_{p^l}$-approximation of $x$.
  			
  		For $x=a/p^i\in\cup_{i=0}^{l-1}\frac{1}{p^i}\dot{\Z}$, the result follows by the observation: $p^lx=p^{l-i}a\in\Z$ so that $1=|p^l x -(p^lx-1)|=|p^l x - (p^lx+1)|.$ 
  			Similarly, let $x=a/(2p^i)\in\cup_{i=0}^{l-1}\frac{1}{2p^i}\dot{\Z},$ then $p^l x=(2m+1)/2$ for some non zero integer $m$ co-prime to $p$ so that $\lfloor p^lx\rfloor=m$. Thus the result follows by observing that $1/(2p^i)=|p^lx-\lfloor p^l x\rfloor |=|p^lx-(\lfloor p^lx\rfloor +1)|.$\end{proof}
  		\begin{example}
  			Let $x=1/5$. Then $x=\frac{4}{25}\oplus\frac{6}{25}$ in $\f_{25}$ so that $|25x-4|=1=|25x-6|.$ Therefore, $1/5$ has no best $\mX_{25}$-approximation.
  		\end{example}
  		
  		\begin{corollary}\label{constantdifference}
  			Suppose $x=r/s\in\Q\setminus\mX_{p^l}$ and $\{p_i/q_i\}$ is the sequence of convergents of one of the $\f_{p^l}$-continued fractions of $x$ with maximum $+1$. Suppose  $N_x$ is as in Remark \ref{longest cf1}. 
  			Then for $k\ge N_x$,	$|q_kx-p_k|=1/s.$
  		\end{corollary}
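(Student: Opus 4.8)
The plan is to track the single quantity $e_k := q_k x - p_k$ across the convergent recurrences and then exploit the eventually periodic tail of a maximum-$+1$ expansion. Write $x = R_1 \oplus R_2$ as in Lemma~\ref{uniquefareysum}, with $R_1 = r_1/(p^l s_1)$, $R_2 = r_2/(p^l s_2)$, $r_1 s_2 - r_2 s_1 = \pm 1$, and $s = s_1 + s_2$; by Remark~\ref{longest cf1} and Proposition~\ref{forNx} the convergents satisfy $p_{N_x}/q_{N_x} = R_1$ and $p_{N_x+1}/q_{N_x+1} = R_2$, so (both fractions being reduced) $p_{N_x} = r_1$, $q_{N_x} = p^l s_1$, $p_{N_x+1} = r_2$, $q_{N_x+1} = p^l s_2$. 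Feeding the recurrences $p_k = a_k p_{k-1} + \ep_k p_{k-2}$ and $q_k = a_k q_{k-1} + \ep_k q_{k-2}$ into the definition of $e_k$ yields the linear recurrence $e_k = a_k e_{k-1} + \ep_k e_{k-2}$, which is the only structural fact I will need.

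First I would compute the two base values. Using $x = (r_1+r_2)/\bigl(p^l(s_1+s_2)\bigr)$ a direct substitution gives
\begin{equation*}
e_{N_x} = q_{N_x}x - p_{N_x} = \frac{s_1 r_2 - r_1 s_2}{s_1+s_2}, \qquad e_{N_x+1} = q_{N_x+1}x - p_{N_x+1} = \frac{s_2 r_1 - r_2 s_1}{s_1+s_2}.
\end{equation*}
Since $r_1 s_2 - r_2 s_1 = \pm 1$, this shows $e_{N_x+1} = -e_{N_x}$ and $|e_{N_x}| = |e_{N_x+1}| = 1/(s_1+s_2) = 1/s$, which already disposes of the case $k = N_x$ and matches the value computed in Lemma~\ref{noapproximation}.

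Next I would propagate constancy along the tail. For the expansion~\eqref{rational1}, Remark~\ref{longest cf1} reads off $(\ep_{N_x+2},a_{N_x+2}) = (+1,2)$ and $(\ep_k,a_k) = (-1,2)$ for all $k \ge N_x+3$. The recurrence then gives $e_{N_x+2} = 2e_{N_x+1} + e_{N_x} = 2e_{N_x+1} - e_{N_x+1} = e_{N_x+1}$, while for $k \ge N_x+3$ it reads $e_k = 2e_{k-1} - e_{k-2}$, i.e. $e_k - e_{k-1} = e_{k-1} - e_{k-2}$. As $e_{N_x+2} = e_{N_x+1}$, induction forces $e_k = e_{N_x+1}$ for every $k \ge N_x+1$, whence $|e_k| = 1/s$ for all $k \ge N_x$. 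For the other maximum-$+1$ expansion~\eqref{rational2} the convergents agree through index $N_x$, the $(N_x+1)$-th partial quotient is enlarged by $2$, and the tail is the uniform $(-1,2)$ string; the same recurrence with base value $e_{N_x+1}' = e_{N_x+1} + 2e_{N_x} = -e_{N_x+1}$ again collapses $|e_k|$ to the constant $1/s$. Finally, for $x \in \mathcal{B}_{p^l}$ (where $N_x = 0$ by Corollary~\ref{dist_B_p}) I would run the identical argument on the explicit expansions \eqref{interger_cfr1}–\eqref{halfinterger_cfr2}: for $x \in \tfrac{1}{p^i}\dot{\Z}$ one has $s = 1$, $e_{-1} = -1$, $e_0 = 1$, and the tail $\tfrac{1}{2+}\tfrac{-1}{2+}\cdots$ yields $e_k = 1$ for all $k \ge 0$, and the half-integer case is the same with $s = 2$.

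The main obstacle is not the algebra — the determinant identity $r_1 s_2 - r_2 s_1 = \pm 1$ feeding the base values, and the constant-difference recurrence, are routine — but the bookkeeping: correctly extracting the partial data $(\ep_k,a_k)$ of both maximum-$+1$ expansions from Remark~\ref{longest cf1} (the isolated $+1$ at index $N_x+2$ in \eqref{rational1} versus the uniform $-1$ tail in \eqref{rational2}), tracking the signs through $e_{N_x},e_{N_x+1}$, and checking that the degenerate members of $\mathcal{B}_{p^l}$ all reduce to the same constant $1/s$.
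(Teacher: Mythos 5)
The paper states this corollary without giving a proof, so there is nothing to compare line by line; your argument is correct and supplies exactly what is missing. Your base computation $|q_{N_x}x-p_{N_x}|=|s_1r_2-r_1s_2|/(s_1+s_2)=1/s$ is the same calculation already performed inside the proof of Lemma \ref{noapproximation}, and the propagation along the tail via the linear recurrence $e_k=a_ke_{k-1}+\ep_ke_{k-2}$ together with the explicit periodic data $(\ep_k,a_k)$ read off from Remark \ref{longest cf1} (including the second expansion \eqref{rational2} and the degenerate $\mathcal{B}_{p^l}$ cases \eqref{interger_cfr1}--\eqref{halfinterger_cfr2}) is the evidently intended route. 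The only caveat is notational: you have read the $s$ in ``$x=r/s$'' as the prime-to-$p$ part of the denominator, i.e.\ the $s$ of Lemma \ref{uniquefareysum} with $s=s_1+s_2$; this is the reading under which the statement is actually true (e.g.\ for $x=1/5\in\Q\setminus\mX_{25}$ one gets $|q_kx-p_k|=1$, not $1/5$), so your choice is correct, but it is worth stating explicitly since the corollary's wording suggests $s$ is the full denominator.
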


  		\begin{example}	Recall Example \ref{manyexpansions}, the  $\f_{5}$-continued fraction expansion of $11/40$ with maximum +1 is
  			$\frac{1}{0+}~\frac{5}{1+}~\frac{1}{2+}~\frac{1}{1+}~\frac{1}{2}$ and the corresponding convergents are given by
  			$$\Big\{\frac{1}{5},\frac{3}{10},\frac{4}{15},\frac{11}{40}\Big\}.$$
  			In fact, this list of convergents is the complete set of best $\mX_5$-approximations of $11/40$. 
  			
  		\end{example}	
  		\begin{example}
  			The $\f_5$-continued fraction expansion of $1/\pi$ with maximum +1 is
  				$$\frac{1}{0+}~\frac{5}{2+}~\frac{-1}{2+}~\frac{1}{2+}~\frac{1}{5}~\frac{-1}{2}~\frac{-1}{2+(\frac{355+113\pi}{78\pi-245})}$$
  			The first six $\f_5$-convergents are
  			$\Big\{\frac{2}{5},\frac{3}{10}, \frac{8}{25}, \frac{43}{135}, \frac{78}{245}, \frac{113}{355} \Big\}.$
  			One can verify that every element of this set is a best $\mX_5$-approximation of $1/\pi$.
  		\end{example}	
  \begin{example}
  The two $\f_5$-continued fraction expansions of ${7}/{27}$ with maximum $+1$  are 
  	$$\frac{1}{0+}~\frac{5}{1+}~\frac{1}{3+}~\frac{1}{2+}~\frac{1}{1+}~\frac{1}{3+y}
  \textnormal{ and }
  	\frac{1}{0+}~\frac{5}{1+}~\frac{1}{3+}~\frac{1}{2+}~\frac{1}{1+}~\frac{1}{1+}~\frac{1}{2+y},$$
  	where $y=\frac{-1}{2+}~\frac{-1}{2+}~\frac{-1}{2+}\cdots.$
  	 The corresponding convergents are
  	\begin{equation*}
  	\left\{\frac{1}{5},\frac{4}{15},\frac{9}{35},\frac{13}{50},\frac{48}{185},\cdots\right\}~\mathrm{and}~
  	\left\{\frac{1}{5},\frac{4}{15},\frac{9}{35},\frac{13}{50},\frac{22}{85},\cdots\right\}, 
  	\end{equation*}
  	respectively. Observe that $\{1/5,4/15,9/35,13/50\}$ is 
  	the set of common convergents of the two $\f_{5}$-continued fraction expansions of $7/27$. Again, we can see that these are the only best $\mX_5$-approximations of $7/27$. 
  \end{example} 	
  	We have a couple of theorems relating convergents of an $\f_{p^l}$-continued fraction expansion of a real number $x$ with maximum $+1$ and best 	$\mX_{p^l}$-approximations of $x$.

  		\begin{theorem}
  			Every best $\mX_{p^l}$-approximation of a real number is a convergent of its $\f_{p^l}$-continued fraction with maximum +1.
  		\end{theorem}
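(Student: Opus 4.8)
The plan is to prove the contrapositive: if $u/v\in\mX_{p^l}$ is \emph{not} a convergent of the $\f_{p^l}$-continued fraction of $x$ with maximum $+1$, then $u/v$ is not a best $\mX_{p^l}$-approximation of $x$. Let $\{p_i/q_i\}$ be the sequence of $\f_{p^l}$-convergents of $x$ with maximum $+1$. Since the denominators $q_i$ are strictly increasing (Theorem \ref{distinctconvergents}(2)) and every $v$ is a positive multiple of $p^l$, I would locate $v$ between consecutive convergent denominators: choose $n$ so that $q_n\le v<q_{n+1}$, and set the goal of exhibiting a competitor $u'/v'\in\mX_{p^l}$ with $0<v'\le v$ and $|v'x-u'|\le|vx-u|$ strictly improving on $u/v$.

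First I would invoke Lemma \ref{sandwich} to write $(u,v)=\alpha(p_{n+1},q_{n+1})+\beta(p_n,q_n)$ with $\alpha,\beta\in\Z$. The inequality $v<q_{n+1}$ forces $|\alpha|$ to be small (essentially $\alpha\in\{0,\pm1\}$ once $\beta$ is adjusted), and $v>0$ together with $q_n\le v$ constrains the signs of $\alpha$ and $\beta$. The key algebraic identity I would use is that $|q_n x - p_n|$ decreases as $n$ increases, combined with $p_{n+1}q_n-p_nq_{n+1}=\pm p^l$ from the convergent recursion. Then $vx-u=\alpha(q_{n+1}x-p_{n+1})+\beta(q_nx-p_n)$, and because $q_{n+1}x-p_{n+1}$ and $q_nx-p_n$ have a controlled relationship in sign and magnitude along a well directed path (the fins satisfy $|y_i|<1$ by Proposition \ref{tail1} when $x\notin\Q\setminus\mX_{p^l}$, and $\ep_i=\sign(y_i)$), I can bound $|vx-u|$ from below. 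The upshot is that whenever $(u,v)$ is not one of the convergent pairs, either $\beta=0$ (so $u/v$ is a multiple of $p_{n+1}/q_{n+1}$, handled directly since $v\ge q_{n+1}$ is excluded, or $\alpha=0$ giving $u/v=p_n/q_n$, contradicting non-convergence) or both $\alpha,\beta$ are nonzero, in which case $p_n/q_n$ itself is the improving competitor: $|q_nx-p_n|<|vx-u|$ with $q_n\le v$.

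I would handle the boundary phenomenon separately using the results already assembled: Proposition \ref{appposition} disposes of the range $q_{M-1}\le v<q_M$ for the terminal index $M$ when $x\in\mX_{p^l}$, showing such $u/v$ equals $(p_M-p_{M-1})/(q_M-q_{M-1})=R_2\ominus R_1$-type points that are never best. For the rational non-$\mX_{p^l}$ case I would use Lemma \ref{noapproximation} and Corollary \ref{constantdifference}: once the index exceeds $N_x$ the quantity $|q_kx-p_k|$ is constant equal to $1/s$, so no convergent beyond $p_{N_x}/q_{N_x}$ improves, and Lemma \ref{noapproximation} caps the denominator of any best approximation at $p^ls_1$; this confines the search to finitely many convergents and the interleaving argument closes the case. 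The points of $\mathcal{B}_{p^l}$ are excluded since Lemma \ref{noapproximation} shows they have no best $\mX_{p^l}$-approximation at all, so the theorem is vacuous for them.

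The main obstacle I anticipate is the sign and magnitude bookkeeping in the two-nonzero-coefficients case: proving $|vx-u|>|q_nx-p_n|$ requires that $\alpha(q_{n+1}x-p_{n+1})$ and $\beta(q_nx-p_n)$ do not cancel, i.e. that they carry the same sign. In the classical theory this follows because $x$ lies strictly between consecutive convergents on alternating sides; here the analogue must be extracted from the geometry of the well directed path with maximum direction changing edges, using that $P_0<x<P_0\oplus P_0'$ (Proposition \ref{prop_maxdchngP0}) and that edges do not cross in $\f_{p^l}$ (Proposition \ref{nocrossing}). Establishing the correct alternation of the signs of $q_ix-p_i$ along the maximum-$+1$ path, rather than along an arbitrary expansion, is the crux of the argument and where the hypothesis \emph{with maximum $+1$} does the real work.
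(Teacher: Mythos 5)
Your overall strategy coincides with the paper's: trap $v$ between consecutive convergent denominators, decompose $(u,v)$ via Lemma \ref{sandwich}, use Theorem \ref{distinctconvergents}(5) to write $|vx-u|=p^l\,|\beta x_{n+2}-\ep_{n+2}\alpha|/(x_{n+2}q_{n+1}+\ep_{n+2}q_n)$, secure $x_{n+2}>1$ from Propositions \ref{tail1} and \ref{rationalwithfareysum}, and dispose of the terminal and rational ranges with Proposition \ref{appposition}, Lemma \ref{bestappX} and Lemma \ref{noapproximation}. The genuine gap is in your central claim that whenever $\alpha$ and $\beta$ are both nonzero, $p_n/q_n$ is an improving competitor, i.e.\ $|vx-u|>|q_nx-p_n|$. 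Since $|q_nx-p_n|=p^l x_{n+2}/(x_{n+2}q_{n+1}+\ep_{n+2}q_n)$, that claim amounts to $|\beta x_{n+2}-\ep_{n+2}\alpha|\ge x_{n+2}$, and it fails for the configuration $u/v=P_{n+1}\ominus P_n$ (so $(\alpha,\beta)=(1,-1)$) with $\ep_{n+2}=-1$: there $|{-x_{n+2}}+1|=x_{n+2}-1<x_{n+2}$, the two terms partially cancel, and $p_n/q_n$ does \emph{not} beat $u/v$. Your closing paragraph correctly names this sign issue as the crux, but it proposes to resolve it by a general alternation-of-signs statement, which is false here; the actual resolution, and the only place the maximum-$+1$ hypothesis enters, is the specific argument that if $P_{n+1}\ominus P_n$ is \emph{not} a convergent of a maximum-$+1$ expansion then $x$ lies between $P_n$ and $P_{n+1}$, forcing $\ep_{n+2}=+1$ and hence $|x_{n+2}+\ep_{n+2}|=x_{n+2}+1>x_{n+2}$. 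This is exactly Case 1 of the paper's proof (and the residual sub-case $\ep=-1$, $\alpha=-\beta+a-2$ of its Case 3, which reduces to the same point); without it your argument would prove too much, since a Farey difference of consecutive convergents genuinely can be a best approximation when the expansion is not of maximum-$+1$ type.

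A secondary inaccuracy: $v<q_{n+1}$ does not force $\alpha\in\{0,\pm1\}$; both coefficients can be large with opposite signs. The paper's Cases 2 and 3 derive the ranges $1-\beta<\alpha\le a-1$ (for $\beta\ge2$) and $1\le\alpha\le\ep-\beta+a-1$ (for $\beta\le-2$) from $0<v<q_{n+1}$ and $q_{n-1}/q_n<1$, and then verify the needed lower bound on $|\beta x-\ep\alpha|$ over those whole ranges. Your plan needs this quantitative case analysis rather than a reduction to small $\alpha$, and it also needs the explicit observation (as in the paper) that $|\beta|\ge1$ is automatic, since $\beta=0$ is incompatible with $0<v<q_{n+1}$ unless $u/v$ is itself a convergent.
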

  		\begin{proof} 
  		Suppose $x\in\R.$	Let $\{P_i=\frac{p_i}{q_i}\}_{i\ge0}$ be a sequence of convergents of an $\f_{p^l}$-continued fraction of $x$ with maximum $+1$. Suppose $y_i$ is the $i$-th fin of the same $\f_{p^l}$-continued fraction of $x$. For convenience, set $x_i=1/|y_i|$.
  			Let $u/v\in\mX_{p^l}$ be a best $\mX_{p^l}$-approximation of $x$. Then for some $n \geq 1$,
  			$q_{n-1} \le v< q_{n}.$ If $x\in\mX_{p^l}$, by Lemma \ref{bestappX}, $v\le q_M$, where $x=p_M/q_M$. Observe that if $v=q_M,$ then $u=p_M$. If $v\ne q_M,$ then by Proposition \ref{appposition}, $v\le q_{M-1}$ so that $n\le M-1$. By Proposition \ref{tail1}, $x_{n+1}>1$ for $n\le M-1$. 
  			For $x\notin \mX_{p^l}$, we note that $x_i\ne1~\forall i\ge1$ unless $x\in\Q\setminus\mX_{p^l}$ and if $x\in\Q\setminus\mX_{p^l}$, then by Lemma \ref{noapproximation}, $N_x \geq 1$ and $n<N_x$ so that $x_{n+1}\ne1$ (Proposition \ref{rationalwithfareysum}). Thus, we can assume that $x_{n+1}>1$ in each case.
  			
  		 Suppose $u/v$ is not an $\f_{p^l}$-convergent of the  $\f_{p^l}$-continued fraction of $x$ with maximum $+1$.  By Lemma \ref{sandwich}, 
  			$v=\beta q_{n-1}+\alpha q_n, u=\beta p_{n-1}+\alpha p_n$ for $\alpha,\beta\in\Z$ with $|\beta|\ge1$ and by Theorem \ref{distinctconvergents}, 
  			\begin{eqnarray}
  				|vx-u|=\dfrac{p^l |\beta x_{n+1}-\ep_{n+1}\alpha|}{x_{n+1}q_n+\ep_{n+1}q_{n-1}}.\label{exp1}
  			\end{eqnarray}
  			Note that $|q_nx-p_n|=\dfrac{p^l }{x_{n+1}q_n+\ep_{n+1}q_{n-1}}$. We will show
  			that the numerator in \eqref{exp1} is strictly bigger than $p^l$ which contradicts that $\frac{u}{v}$ is a best $\mX_{p^l}$-approximation of $x$.
  			
  		\noindent	\textbf{Case 1.} Suppose $|\beta|=1$. Since $v>0,$  $u/v=(p_{n}-p_{n-1})/(q_{n}-q_{n-1})=P_{n}\ominus P_{n-1}.$ Note that the continued fraction is with maximum +1 and $u/v$ is not a convergent implies that $x$ lies between $P_n$ and $P_{n-1}$ so that $\ep_{n+1}=1$, which gives that
  			$$|\beta x_{n+1}-\ep_{n+1}\alpha|=|x_{n+1}+\ep_{n+1}|>1.$$
%  			equality holds if and only if $x_{n+1}=2.$ Now suppose $x_{n+1}=2$ then by Proposition \ref{tail2} and Proposition \ref{rationalwithfareysum}, $x\in\mX_{p^l}$ and $P_{n+1}=x.$ Consequently,
%  			$|vx-u|=|q_nx-p_n|$
%  			which contradicts that $u/v$ is a best $\mX_{p^l}$-approximation of $x$.
%  			
  			\textbf{Case 2.} Suppose $\beta\ge2$.
  			Then  $q_n \le v=\beta q_{n-1}+\alpha q_{n} < q_{n+1}=\epsilon_{n+1}q_{n-1}+a_{n+1}q_{n}$. Hence
  			$1-\beta < \alpha \leq a_{n+1}-1$ (since $q_{n-1}>0$). Using $|y_{n+2}|\le1$, $a_{n+1}\le x_{n+1}+1$, we have
  			$1-\beta < \alpha \leq x_{n+1}$.
  			These bounds on $\alpha$ imply $\beta x_{n+1}-\ep_{n+1}\alpha > 1$.
  			
  		\noindent	\textbf{Case 3.} Suppose $\beta\le -2$. Then
  			$0<v=\beta q_{n-1}+\alpha q_{n} < \epsilon_{n+1}q_{n-1}+a_{n+1}q_{n}$. Hence
  			$1\leq \alpha\le\ep_{n+1}-\beta+a_{n+1}-1$ (since $\frac{q_{n-1}}{q_n}<1$). 	These bounds on $\alpha$ imply $\beta x_{n+1}-\ep_{n+1}\alpha<-1$ unless  $\ep_{n+1}=-1$ and $\alpha=-\beta+a_{n+1}-2$. 
  			
  			Now,
  			suppose $\ep_{n+1}=-1$ and $\alpha=-\beta+a_{n+1}-2,$ then $u=\beta(p_n-p_{n-1})+(a_{n+1}-2)p_n$. Since $\ep_{n+1}=-1$,  $a_{n+1}\ge2.$ First, we consider $a_{n+1}\ge3$, then  $x_{n+1}\ge2$. If $x_{n+1}>2$ then  $\beta x_{n+1}-\ep_{n+1}\alpha<-1$. If $x_{n+1}=2$, then by Proposition \ref{tail2}, $x\in\mX_{p^l}$ and $P_{n+1}=x$ so that $a_{n+1}=2$ but we have considered that $a_{n+1}\ge3$. Thus inequality $\beta x_{n+1}-\ep_{n+1}\alpha<-1$ holds. Now consider the remaining case, $a_{n+1}=2,$ then $u/v=(p_{n+1}-p_n)/(q_{n+1}-q_n).$ Using a similar argument as in Case 1, we get the inequality.
  		\end{proof}
  		
  	\begin{proposition}\label{nobestappoffareysum}
  		Let $x\in\mX_{p^l}$. If $x=\lfloor p^lx\rfloor/p^l\oplus (\lfloor p^lx\rfloor+1)/p^l$, where $\mathrm{gcd}(\lfloor p^lx\rfloor,p)=1=\mathrm{gcd}(\lfloor p^lx\rfloor+1,p)$. Then $x$ has no best $\mX_{p^l}$-approximation other than itself.
  	\end{proposition}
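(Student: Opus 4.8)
The plan is to reduce $x$ to an explicit fraction, list all of its $\f_{p^l}$-convergents with maximum $+1$, and then eliminate the two nontrivial ones by a direct tie argument. First I would set $a=\lfloor p^lx\rfloor$, so the hypothesis reads $x=a/p^l\oplus (a+1)/p^l$. Since $\mathrm{gcd}(a,p)=\mathrm{gcd}(a+1,p)=1$, both $a/p^l$ and $(a+1)/p^l$ are reduced fractions, and the computation $(a+1)p^l-p^l a=p^l$ shows they are adjacent in $\f_{p^l}$; hence their Farey sum is $x=(2a+1)/(2p^l)$. The assumption $x\in\mX_{p^l}$ forces this fraction to be reduced (otherwise $p\mid 2a+1$, and the reduced denominator $2p^{l-1}$ would fail to be divisible by $p^l$), so $x$ has denominator $2p^l$ and $p^lx=a+1/2$.

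Next I would identify every candidate. By Proposition \ref{nonuniquehalfpoint}, the only $\f_{p^l}$-continued fractions of $x$ with maximum $+1$ are $\frac{1}{0+}~\frac{p^l}{a+}~\frac{+1}{2}$ and $\frac{1}{0+}~\frac{p^l}{(a+1)+}~\frac{-1}{2}$, whose convergents are $\{a/p^l,x\}$ and $\{(a+1)/p^l,x\}$ respectively. Thus the totality of $\f_{p^l}$-convergents of $x$ with maximum $+1$ is the three-element set $\{a/p^l,(a+1)/p^l,x\}$. By the preceding theorem, every best $\mX_{p^l}$-approximation of $x$ is a convergent of such an expansion, so any best approximation of $x$ must lie in this set; one may also note that Lemma \ref{bestappX} already confines denominators to $v\le 2p^l$, consistent with this.

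Finally I would rule out the two nontrivial convergents by a tie. Both $a/p^l$ and $(a+1)/p^l$ have the same denominator $p^l$, and using $p^lx=a+1/2$ they satisfy $|p^lx-a|=1/2=|p^lx-(a+1)|$. Because the defining inequality for a best $\mX_{p^l}$-approximation is strict, $a/p^l$ cannot be best: the distinct vertex $(a+1)/p^l\in\mX_{p^l}$ has denominator $p^l\le p^l$ and ties it at the value $1/2$, so the required strict inequality fails. The symmetric argument eliminates $(a+1)/p^l$. Hence the only surviving candidate is $x$ itself (which trivially realizes distance $0$), and therefore $x$ has no best $\mX_{p^l}$-approximation other than itself.

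The argument is short and I do not anticipate a serious obstacle; the single point needing care is that $x$ carries \emph{two} distinct maximum $+1$ expansions, so when invoking the convergent theorem one must take the union of both convergent sets rather than the convergents of a single expansion, and the strictness of the best-approximation inequality is precisely what the two equal-denominator, equal-value vertices exploit.
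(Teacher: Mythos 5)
Your proof is correct and rests on exactly the same decisive observation as the paper's: $|p^lx-\lfloor p^lx\rfloor|=\tfrac12=|p^lx-(\lfloor p^lx\rfloor+1)|$, so the strictness in the definition of best $\mX_{p^l}$-approximation fails for both equal-denominator candidates. The only difference is how you narrow the candidate set --- you route through Proposition~\ref{nonuniquehalfpoint} and the theorem that best approximations are convergents, whereas the paper just notes directly that any best approximation $r/s\ne x$ must have $s=p^l$; both routes are valid and the discrepancy is cosmetic.
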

  	\begin{proof}Suppose $r/s\in\mX_{p^l}$ is a best $\mX_{p^l}$-approximation of $x$. If $x\ne r/s$, then $s=p^l$. Observe that	$|p^lx-\lfloor p^lx\rfloor|=\frac{1}{2}=|p^lx-(\lfloor p^lx\rfloor+1)|$ and so $r/p^l$ is not a best $\mX_{p^l}$-approximation of $x.$	\end{proof}
  	\begin{example}
  		Take $3/10\in\mX_5$  with	$|5.3/10-1|=1/2=|5.3/10-2|$ so that $3/10$ has no best approximation other than itself.
  	\end{example}
  		\begin{theorem} Suppose $x\in\R$. Then
  			\begin{enumerate}
  				\item If $x\not\in\Q\setminus{\mX_{p^l}}$ and $x\ne\lfloor p^lx\rfloor/p^l\oplus (\lfloor p^lx\rfloor+1)/p^l$, then every convergent of the  $\f_{p^l}$-continued fraction of $x$ with maximum $+1$ is a best $\mX_{p^l}$-approximation of $x$.
  				
  				\item For $x\in\Q\setminus\mX_{p^l}$ or $x=\lfloor p^lx\rfloor/p^l\oplus (\lfloor p^lx\rfloor+1)/p^l$, an $\f_{p^l}$-convergent is a best $\mX_{p^l}$-approximation of $x$ if and only if it is a convergent of both  the  $\f_{p^l}$-continued fractions of $x$ with maximum $+1$.
  			\end{enumerate}
  		\end{theorem}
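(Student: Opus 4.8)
The plan is to read the statement as two converses to the preceding theorem (every best $\mX_{p^l}$-approximation is a convergent of a maximum-$+1$ expansion), and to reduce everything to one clean estimate. Write $P_m=p_m/q_m$ and $e_m=|q_mx-p_m|$. From Theorem \ref{distinctconvergents}(5), $x=(x_{m+1}p_m+\ep_{m+1}p_{m-1})/(x_{m+1}q_m+\ep_{m+1}q_{m-1})$ with $x_{m+1}=1/|y_{m+1}|$, and since $|p_{m-1}q_m-p_mq_{m-1}|=p^l$ for all $m$, this gives the two identities $e_m=p^l/(x_{m+1}q_m+\ep_{m+1}q_{m-1})$ and, for any $u/v\in\mX_{p^l}$ written via Lemma \ref{sandwich} (with index $m-1$) as $(u,v)=\alpha(p_m,q_m)+\beta(p_{m-1},q_{m-1})$, the formula $|vx-u|=e_m\,|x_{m+1}\beta-\ep_{m+1}\alpha|$. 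In particular, taking $(\alpha,\beta)=(0,1)$ gives $e_{m-1}=x_{m+1}e_m$. Part (1) then reduces to proving $|x_{m+1}\beta-\ep_{m+1}\alpha|>1$, and part (2) is structural.

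For part (1), fix $P_m$ (the terminal exact convergent of an $x\in\mX_{p^l}$ being trivial) and let $u/v\in\mX_{p^l}$ with $0<v\le q_m$ and $u/v\ne P_m$; then $\beta\ne0$, so $|\beta|\ge1$. Under the hypotheses $x\notin\Q\setminus\mX_{p^l}$ and $x\ne\lfloor p^lx\rfloor/p^l\oplus(\lfloor p^lx\rfloor+1)/p^l$, Proposition \ref{tail1} and Corollary \ref{tail2} give $x_{m+1}>1$ whenever $P_{m+1}$ exists, so $e_{m-1}=x_{m+1}e_m>e_m$ and the errors strictly decrease. The constraint $0<\alpha q_m+\beta q_{m-1}\le q_m$ pins $\alpha$ into a half-open interval of length one, hence to a single integer, and I would run the sign analysis exactly as in the three cases of the preceding theorem: for $\beta=1$ the only admissible $\alpha$ is $0$, giving the competitor $P_{m-1}$ with $|vx-u|=e_{m-1}>e_m$; for $|\beta|\ge2$ the term $x_{m+1}\beta$ dominates. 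The endpoint $v=q_m$ (with $u\ne p_m$) is not exceptional: it forces $|\beta|$ to be a positive multiple of $q_m/\gcd(q_m,q_{m-1})\ge q_m/q_{m-1}>1$, hence $|\beta|\ge2$ and the estimate is comfortable. In every case $|x_{m+1}\beta-\ep_{m+1}\alpha|>1$, so $|vx-u|>e_m$ and $P_m$ is a best $\mX_{p^l}$-approximation.

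The main obstacle is the cancellation when $\ep_{m+1}=-1$, where $|x_{m+1}\beta-\ep_{m+1}\alpha|=|x_{m+1}\beta+\alpha|$ has two summands of opposite sign; a crude bound only yields $|x_{m+1}\beta+\alpha|>|\beta|\,(x_{m+1}-q_{m-1}/q_m)$, which need not exceed $1$ when $x_{m+1}$ is near $1$ and $q_{m-1}/q_m$ is near $1$. I expect to resolve this exactly as in Case 3 of the preceding theorem: $\ep_{m+1}=-1$ forces $a_{m+1}\ge2$, so either $a_{m+1}\ge3$ and $x_{m+1}>2$, or $a_{m+1}=2$, in which case $x_{m+1}=2$ would force $x\in\mX_{p^l}$ with $P_{m+1}=x$ (Corollary \ref{tail2}, treated separately) and otherwise the offending competitor is precisely $(p_{m+1}-p_m)/(q_{m+1}-q_m)$, which reduces to the $|\beta|=1$ computation. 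The integrality of $(\alpha,\beta)$ is what rules out the apparent near-misses, and verifying it uniformly across the $\ep_{m+1}=-1$ sub-cases is the delicate point.

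For part (2) there are exactly two maximum-$+1$ expansions. If $x=\lfloor p^lx\rfloor/p^l\oplus(\lfloor p^lx\rfloor+1)/p^l$, Proposition \ref{nonuniquehalfpoint} exhibits the two expansions, whose only common convergent is $x$, while Proposition \ref{nobestappoffareysum} shows $x$ has no best approximation other than itself; hence the best approximations coincide with the common convergents. If $x\in\Q\setminus\mX_{p^l}$ with $x\notin\mathcal{B}_{p^l}$, the two expansions of Remark \ref{longest cf1} agree through $b,a_1,\dots,a_{N_x}$ and so share precisely $P_0,\dots,P_{N_x}$ (with $P_{N_x}=R_1$), diverging at index $N_x+1$. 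For the implication ``common $\Rightarrow$ best'' I would apply the part-(1) estimates for the indices $m\le N_x$, using Proposition \ref{rationalwithfareysum} to get $x_m>1$ there, and deduce that $R_1=P_{N_x}$ is best from Lemma \ref{noapproximation}, which caps the denominator of any best approximation by $q_{N_x}=p^ls_1$. Conversely, any convergent lying in only one expansion has index $>N_x$, hence error exactly $1/s$ by Corollary \ref{constantdifference}, equal to $e_{N_x}$ but with strictly larger denominator, so it is beaten by $R_1$ and is not best; combined with the preceding theorem this forces every best approximation to be common. Finally, for $x\in\mathcal{B}_{p^l}$ the two expansions differ already at $b$ and share no convergent, matching the fact (Lemma \ref{noapproximation}) that such $x$ has no best approximation, so the equivalence holds vacuously.
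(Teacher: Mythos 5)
Your proposal follows the paper's own argument essentially verbatim: the same reduction via Lemma \ref{sandwich} and Theorem \ref{distinctconvergents}(5) to the key inequality $|\beta x_{n+2}-\ep_{n+2}\alpha|>1$, the same three-way case split on $\beta$ with the same resolution of the $\ep_{n+2}=-1$ cancellation through $a_{n+1}\ge 2$ and Corollary \ref{tail2}, and the same appeal to Lemma \ref{noapproximation}, Propositions \ref{nobestappoffareysum} and \ref{rationalwithfareysum}, and Corollary \ref{constantdifference} for part (2). The only cosmetic difference is that you treat all denominators $0<v\le q_m$ in a single pass where the paper inducts on the convergent index and only analyses $q_n<v\le q_{n+1}$ directly.
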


  		\begin{proof}			
  			Suppose $x\notin\Q\setminus \mX_{p^l}$ and $x\ne\lfloor p^lx\rfloor/p^l\oplus (\lfloor p^lx\rfloor+1)/p^l$.  Now let $\{\frac{p_k}{q_k}\}_{k=0}^M$ be the sequence of $\f_{p^l}$-convergents, where $M$ is finite if and only if $x\in\mX_{p^l}$.
  			Note that  $\frac{p_0}{q_0}=\frac{b}{p^l}$, where 	$b$ is given by Corollary \ref{algoformaximumflips}. This is clearly a best $\mX_{p^l}$-approximation of $x$.

  			Assume that, for $0 \le k \le n$, $p_{k}/q_{k}$  is a best $\mX_{p^l}$-approximation of $x$. Now we show that $p_{n+1}/q_{n+1}$ is a best $\mX_{p^l}$-approximation of $x$. When $M$ is finite and $n=M-1$, $p_M/q_M$ is a best $\mX_{p^l}$-approximation as $q_M x-p_M=0$. 	Thus, assume $n \geq 0$ is an integer with the restriction that $n<M-1$ when $M$ is finite.

  			For any $u/v\in\mX_{p^l}$ different from $p_{n+1}/q_{n+1}$ with $0<v\le q_{n}$, we have $|vx-u|>|q_{n}x-p_{n}|\ge|q_{n+1}x-p_{n+1}|$. Next assume $q_{n}< v\le q_{n+1}$. By Theorem \ref{distinctconvergents} (5),
  			\begin{align*}
			&|q_{n+1}x-p_{n+1}| =\frac{p^l}{x_{n+2}q_{n+1}+\epsilon_{n+2}q_{n}}. 
  	%	&	|vx-u| %&=\frac{|x_{n+2}(p_{n+1}q-q_{n+1}p)+\epsilon_{n+2}(p_{n}q-q_{n}p)|}{x_{n+2}q_{n+1}+\epsilon_{n+2}q_{n}}.
  			\end{align*}
  		By Lemma \ref{sandwich},
  			$u= \beta p_{n}+\alpha p_{n+1}$, $v= \beta q_{n}+\alpha q_{n+1}$ for some $\alpha,\beta\in\Z$. % with $|\beta|\ge 2$.
  			Thus,
  			\begin{align}|vx-u|=\frac{p^l| \beta x_{n+2}-\alpha \epsilon_{n+2}|}{x_{n+2}q_{n+1}+\ep_{n+2}q_n}.\label{scaledistance}\end{align}
  			Now, we will show that the numerator in \eqref{scaledistance} is greater than $p^l$. 	The proof of part (1) will be complete if we show that
  			\begin{equation}\label{greaterthan1}
  			| \beta x_{n+2}-\alpha \epsilon_{n+2}|>1.
  			\end{equation}
  			
  			\textbf{Case 1.} Suppose $|\beta|=1$. Then
  			$	| \beta x_{n+2}-\alpha \epsilon_{n+2}|=|x_{n+2}+\ep_{n+2}|,$
  			and $u/v=(p_{n+1}-p_n)/(q_{n+1}-q_n).$ The definition of well directed path with maximum direction changing edges forces that either $\ep_{n+2}=1$ or $P_{n+1}=x.$
  			Thus we have $|x_{n+2}+\ep_{n+2}|>1.$

  			\textbf{Case 2.} Suppose $\beta\ge2$. Since $v\le q_{n+1}$, we have
  			$(\alpha-1)q_{n+1}\le -\beta q_{n} <0$. Since $\alpha\in\Z$, we have
  			$\alpha\leq0$. Again, since $q_{n}< \beta q_{n}+\alpha q_{n+1}$, we have
  			$\frac{-\alpha}{\beta-1}< \frac{q_{n}}{q_{n+1}}$.
  			Hence, $\alpha> 1-\beta$ (since $q_{n}/q_{n+1}<1$). Thus we have shown $1-\beta<\alpha\le0$.
  			Using these bounds and the fact that $x_{n+2}> 1$ (by Proposition \ref{tail1}), inequality  \eqref{greaterthan1} follows.
  			
  			\textbf{Case 3.} Suppose $\beta\le -2$. Since $v>0$, $\alpha\geq 1$.
  			Since $v\le q_{n+1}$, $\frac{-\beta}{\alpha-1}\ge \frac{q_{n+1}}{q_{n}}$ so that $\alpha\le -\beta$ (since $q_{n+1}/q_{n}>1$). These bounds on $\alpha$ and $\beta$ implies inequality \eqref{greaterthan1} unless $\a=-\b$ and $\ep_{n+2}=-1$. Now suppose $\a=-\b$ and $\ep_{n+2}=-1$. Then $p/q=(p_{n+1}-p_n)/(q_{n+1}-q_n),$ we have discussed this possibility in Case 1. Thus we have $|x_{n+2}+\ep_{n+2}|>1.$

  			\medskip
  			To prove the second assertion of the theorem, let $x\in \Q\setminus \mX_{p^l}$. By Lemma \ref{noapproximation} and Proposition \ref{nobestappoffareysum}, if $x\in\mathcal{B}_{p^l}$ or $x=\lfloor p^lx\rfloor/p^l\oplus (\lfloor p^lx\rfloor+1)/p^l$, then $x$ has no best $\mX_{p^l}$-approximation. We assume that $x\ne\lfloor p^lx\rfloor/p^l\oplus (\lfloor p^lx\rfloor+1)/p^l$ $x\not\in\mathcal{B}_{p^l}$  so that $N_x\ge1$. 
  			Suppose $k\le N_x$. Then, by Proposition \ref{rationalwithfareysum}, we have $x_{k+1}>1$. Now the result follows by the same argument used in the first part.
  			
  		The	converse follows from Corollary \ref{constantdifference}.
  			%, suppose $p_k/q_k,$ is not a common convergent of the two $\f_{p^l}$-continued fractions of $x$ with maximum $+1$. Then $k> N_x\ge 0$ and $|q_k x-p_k|=|q_{N_x} x - p_{N_x}|$ (by Corollary \ref{constantdifference}). The remaining case $ q_0=p^l$ ($N_x=0,q_{N_x}=q_k$)   is completed by Lemma \ref{noapproximation} as $x\in\mathcal{B}_{p^l}$. 
  		\end{proof}
  		\bibliographystyle{plain}
  		\bibliography{bib}
  	\end{document}